\documentclass[a4paper, reqno, 10pt]{amsart}

\usepackage{mathrsfs}
\usepackage{mathtools}
\usepackage{amsmath, amscd}
\usepackage{amssymb, amsfonts}
\usepackage{amsthm}
\usepackage{mathabx}
\usepackage{enumitem}
\usepackage{standalone}

\usepackage{tikz}
\usepackage{tikz-cd}

\usetikzlibrary{matrix,arrows,decorations.pathmorphing,decorations.pathreplacing,decorations.markings, snakes}
\tikzset{
    dot/.style={circle,draw,fill,inner sep=1pt},
    arrow/.style={->,thick,shorten <=2pt,shorten >=2pt},
    twoarrow/.style={double,double distance=1.5pt,shorten <=9pt,shorten >=10pt,decoration={markings,mark=at position -8pt with {\arrow[scale=1.75]{>}}},preaction={decorate}},
    twoarrowlonger/.style={double,double distance=1.5pt,shorten <=5pt,shorten >=6pt,decoration={markings,mark=at position -4pt with {\arrow[scale=1.75]{>}}},preaction={decorate}},
    twoarrowshorter/.style={double,double distance=1.5pt,shorten <=13pt,shorten >=14pt,decoration={markings,mark=at position -12pt with {\arrow[scale=1.75]{>}}},preaction={decorate}},
    twoarrowshorthead/.style={double,double distance=1.5pt,shorten <=9pt,shorten >=20pt,decoration={markings,mark=at position -18pt with {\arrow[scale=1.75]{>}}},preaction={decorate}},
    threearrowpart1/.style={ thick,double,double distance=3pt,shorten <=9pt,shorten >=11pt},
    threearrowpart2/.style={ thick,shorten <=9pt,shorten >=10pt},
    threearrowpart3/.style={ shorten <=9pt,shorten >=10pt,decoration={markings,mark=at position -8pt with {\arrow[scale=3]{>}}},preaction={decorate}},
fourarrowpart1/.style={thick, double,double distance=4pt,shorten <=1pt,shorten >=2.75pt},
fourarrowpart2/.style={thick,  double,double distance=1pt,shorten <=1pt,shorten >=1.25pt,decoration={markings,mark=at position -.05pt with {\arrow[scale=3,ultra thin]{>}}},preaction={decorate} },
 lax/.style={double,double distance=1.5pt,shorten <=5pt,shorten >=6pt, decoration={markings,mark=at position -4pt with {\arrow[scale=0.75, thick]{>}}, mark=at position 0.5*\pgfdecoratedpathlength with {\node {
\begin{tikzpicture} \draw[-stealth, scale=0.3] (-0.8,-0.8) -- (1,1.3); \end{tikzpicture}
}}},postaction={decorate}},
 oplax/.style={double,double distance=1.5pt,shorten <=5pt,shorten >=6pt, decoration={markings,mark=at position -4pt with {\arrow[scale=0.75, thick]{>}}, mark=at position 0.5*\pgfdecoratedpathlength with {\node {
\begin{tikzpicture} \draw[stealth-, scale=0.3] (-0.8,-0.8) -- (1,1.3); \end{tikzpicture}
}}},postaction={decorate}}
}
\usetikzlibrary{patterns}

\newlength{\hatchspread}
\newlength{\hatchthickness}
\newlength{\hatchshift}
\newcommand{\hatchcolor}{}
\tikzset{hatchspread/.code={\setlength{\hatchspread}{#1}},
         hatchthickness/.code={\setlength{\hatchthickness}{#1}},
         hatchshift/.code={\setlength{\hatchshift}{#1}},
         hatchcolor/.code={\renewcommand{\hatchcolor}{#1}}}
\tikzset{hatchspread=3pt,
         hatchthickness=0.4pt,
         hatchshift=0pt,
         hatchcolor=black}
\pgfdeclarepatternformonly[\hatchspread,\hatchthickness,\hatchshift,\hatchcolor]
   {custom north west lines}
   {\pgfqpoint{\dimexpr-2\hatchthickness}{\dimexpr-2\hatchthickness}}
   {\pgfqpoint{\dimexpr\hatchspread+2\hatchthickness}{\dimexpr\hatchspread+2\hatchthickness}}
   {\pgfqpoint{\dimexpr\hatchspread}{\dimexpr\hatchspread}}
   {
    \pgfsetlinewidth{\hatchthickness}
    \pgfpathmoveto{\pgfqpoint{0pt}{\dimexpr\hatchspread+\hatchshift}}
    \pgfpathlineto{\pgfqpoint{\dimexpr\hatchspread+0.15pt+\hatchshift}{-0.15pt}}
    \ifdim \hatchshift > 0pt
      \pgfpathmoveto{\pgfqpoint{0pt}{\hatchshift}}
      \pgfpathlineto{\pgfqpoint{\dimexpr0.15pt+\hatchshift}{-0.15pt}}
    \fi
    \pgfsetstrokecolor{\hatchcolor}
    \pgfusepath{stroke}
   }

\usepackage{enumitem}
\usepackage{bbm}
\usepackage{url}
\usepackage{todonotes}

\usepackage[parfill]{parskip}

\usepackage{comment}

\usepackage[colorlinks=true,linktocpage=true,linkcolor=blue,citecolor=red]{hyperref}

\DeclareFontFamily{OT1}{pzc}{}
\DeclareFontShape{OT1}{pzc}{m}{it}{<-> s * [1.10] pzcmi7t}{}
\DeclareMathAlphabet{\mathpzc}{OT1}{pzc}{m}{it}


\DeclareMathOperator{\Bord}{Bord}

\DeclareMathOperator{\Alg}{Alg}

\DeclareMathOperator{\opp}{{op}}
\DeclareMathOperator{\id}{id}
\DeclareMathOperator{\Mod}{Mod}

\def\xto{\xrightarrow}

\newcommand{\R}{{\mathbb{R}}}

\DeclareMathOperator{\Hom}{Hom}
\DeclareMathOperator{\Map}{Map}

\newcommand{\kk}{\mathbb{K}}

\newcommand{\unit}{\mathbbm{1}}

\DeclareMathOperator{\op}{op}
\DeclareMathOperator{\rev}{rev}

\renewcommand{\S}{\mathcal S}
\newcommand{\C}{\mathcal C}

\newcommand{\F}{\mathcal F}
\newcommand{\G}{\mathcal G}

\def\RR{\mathbb{R}}

\newcommand{\oul}[1]{{\overline{\underline{#1}}}}

\newcommand\myframe{}
\def\myframe{
\begin{scope}[xshift=\x, yshift=\y]
\begin{scope}[rotate=\k]
\draw[-stealth] (0,0) -- (0.25,0);
\draw[->]  (0,0) -- (0, 0.25);
\fill (0, 0) circle (0.007cm);
\end{scope}
\end{scope}
}
\newcommand\framing{}
\def\framing(#1, #2) (#3){
\def\k{#3}
\def\x{#1cm}
\def\y{#2cm}
\myframe
}

\newcommand\shiftframe{}
\def\shiftframe{
\begin{scope}[xshift=\x, yshift=\y]
\begin{scope}[rotate=\k]
\begin{scope}[shift={(-0.125,-0.125)}]
\draw[-stealth] (0,0) -- (0.25,0);
\draw[->]  (0,0) -- (0, 0.25);
\fill (0, 0) circle (0.007cm);
\end{scope}
\end{scope}
\end{scope}
}

\newcommand\shiftframing{}
\def\shiftframing(#1, #2) (#3){
\def\k{#3}
\def\x{#1cm}
\def\y{#2cm}
\shiftframe
}

\newcommand\vecs{}
\def\vecs{
\begin{scope}[xshift=\x, yshift=0.15cm]
\begin{scope}[rotate=\k]
\begin{scope}[shift={(-0.05,-0.05)}]
\draw[-stealth] (0,0) -- (0.1,0);
\draw[->]  (0,0) -- (0, 0.1);
\fill (0,0) circle (0.0011);
\end{scope}
\end{scope}
\end{scope}
}

\newcommand{\coord}[2]{
\def\k{#1}
\def\x{#2cm}
\vecs
}

\newcommand{\Aa}{\tt{A}}
\newcommand{\Bb}{\tt{B}}
\newcommand{\Rr}{\mathscr{R}}
\newcommand{\Ss}{\mathscr{S}}
\newcommand{\Tt}{\mathscr{T}}

\definecolor{bluebg}{HTML}{8372E2}
\definecolor{bluecirc}{HTML}{A295EC}
\definecolor{blue_ver}{HTML}{6A55DC}
\definecolor{bluedark}{HTML}{543DD2} 
\colorlet{blue_hor}{bluedark}

\definecolor{redbg}{HTML}{FD6E78}
\definecolor{redcirc}{HTML}{F1313F} 
\colorlet{red_ver}{redcirc}
\definecolor{reddark}{HTML}{FC2F39} 
\colorlet{red_hor}{reddark}
\definecolor{greenbg}{HTML}{89F169}
\definecolor{greencirc}{HTML}{53D22B}
\colorlet{greendark}{greencirc!70!black}


%
%




\makeatletter
\def\paragraph{\@startsection{paragraph}{4}%
  \z@\z@{-\fontdimen2\font}%
  {\normalfont\bfseries}}
\makeatother


\newtheorem{theorem}{Theorem}[section]
\newtheorem*{theorem*}{Theorem}
\newtheorem{prop}[theorem]{Proposition}
\newtheorem*{prop*}{Proposition}
\newtheorem{cor}[theorem]{Corollary}
\newtheorem{lemma}[theorem]{Lemma}

\theoremstyle{definition}
\newtheorem{defn}[theorem]{Definition}

\newtheorem{definition}[theorem]{Definition}
\newtheorem*{defn*}{Definition}
\newtheorem*{warning*}{Warning}
\newtheorem{ex}[theorem]{Example}
\newtheorem{example}[theorem]{Example}

\newtheorem{remark}[theorem]{Remark}

\newcommand{\mpim}{\ensuremath{
  \mathchoice{\includegraphics[height=2.5ex]{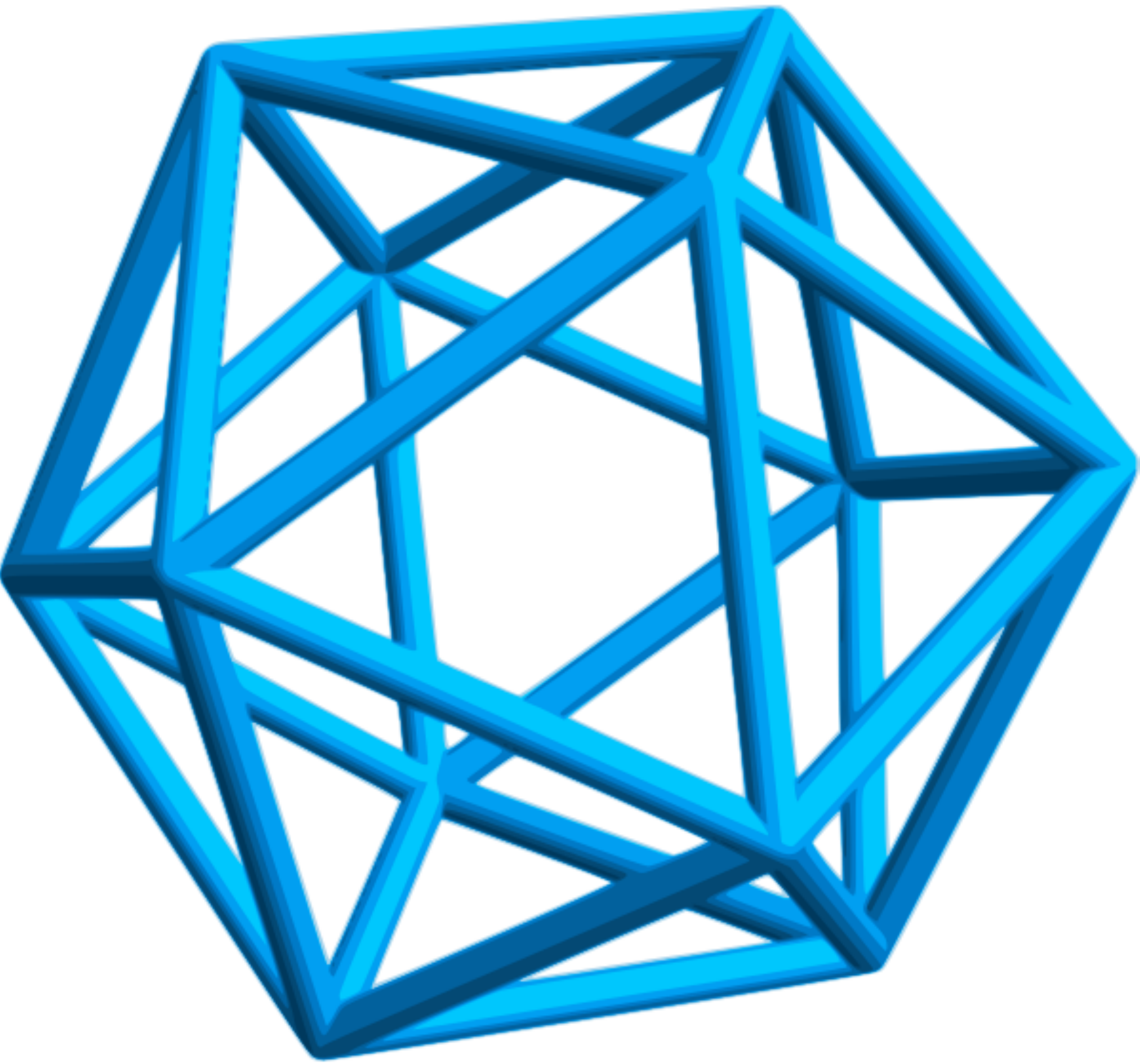}}
    {\includegraphics[height=2.5ex]{MPIM0}}
    {\includegraphics[height=2.5ex]{MPIM0}}
    {\includegraphics[height=2.5ex]{MPIM0}}
}}

\setcounter{tocdepth}{1}


\title{Duals and adjoints in higher Morita categories}

\author[O.~Gwilliam]{Owen Gwilliam}
\address{Max Planck Insitute for Mathematics, 53111 Bonn, Germany}
\email{gwilliam@mpim-bonn.mpg.de}
\author[C.~Scheimbauer]{Claudia Scheimbauer}
\address{Mathematical Insitute, University of Oxford, OX2 6GG Oxford, UK}
\email{scheimbauer@maths.ox.ac.uk}

\subjclass[2010]{18D05, 55U30 (primary), 81T45, 18G55 (secondary)}




\begin{document}

\begin{abstract}
We study duals for objects and adjoints for $k$-morphisms in $\Alg_n(\S)$, an $(\infty,n+N)$-category that models a higher Morita category for $E_n$ algebra objects in a symmetric monoidal $(\infty,N)$-category $\S$.
Our model of $\Alg_n(\S)$ uses the geometrically convenient framework of factorization algebras.
The main result is that $\Alg_n(\S)$ is fully $n$-dualizable, verifying a conjecture of Lurie. Moreover, we unpack the consequences for a natural class of fully extended topological field theories and explore $(n+1)$-dualizability. 
\end{abstract}

\maketitle

\tableofcontents

\section{Introduction}

Morita theory plays a key role in modern algebra,
emphasizing an algebra's category of modules and thus leading to a focus on categorical structures.\footnote{The literature since Morita's initiating paper \cite{Morita} is vast. We recommend \cite{Schwede, Toen} as starting places for contemporary activity.}
It is convenient to package this perspective into a 2-category
whose objects are associative algebras, 
whose 1-morphisms are bimodules,
and whose 2-morphisms are maps of bimodules \cite{Benabou}.
Many structural results are naturally articulated in this 2-category.\footnote{In this introduction and elsewhere, we will use the term $n$-category informally, meaning the appropriate weakened versions.}

There is a natural generalization of the notion of dual for a vector space to any monoidal category.\footnote{According to the survey \cite{BeckerGottlieb}, this notion goes back to Dold and Puppe~\cite{DoldPuppe}.}
As algebras can be tensored, the Morita 2-category is symmetric monoidal,
and so one can ask which algebras admit duals.
One can quickly check that every algebra $A$ has a dual given by the opposite algebra $A^{\opp}$.
The evaluation morphism $A \otimes A^{\opp} \to \kk$ is given by $A$ viewed as a bimodule,
and the coevalution morphism $\kk \to A^{\opp} \otimes A$ is given by the unit element.
(We will discuss in a moment the connection with topological field theory in the sense of Atiyah and Segal.)

Similarly, in any 2-category, one can ask which 1-morphisms admit left or right adjoints, generalizing the notion of adjoints for functors between categories.
For an $(A,B)$-bimodule $M$ viewed as a 1-morphism $A \to B$ in the Morita 2-category,
a straightforward algebraic argument implies that
$M$ admits a right adjoint $N: B \to A$ if and only if $M$ is finitely-generated and projective over $A$.
In that case, $N \cong \Hom_A(M,A)$.
(See, e.g., \cite{DoldPuppe}.)
The dual statement identifies a left adjoint as $\Hom_B(M,B)$,
under the hypothesis that $M$ is finitely-generated and projective over~$B$.

Following Lurie \cite{LurieTFT}, one can ask to identify the fully dualizable sub-2-category,
which consists of objects with duals such that the (co)evaluation morphisms admit right and left adjoints and all 1-morphisms between these objects that also admit right and left adjoints.
In light of our observations above, one finds that these objects are the separable $\kk$-algebras that are finitely-generated and projective over $\kk$.\footnote{To unpack the argument a little, note that if we apply the adjointability conditions on 1-morphisms to the (co)evaluation maps, we need $A$ to be finitely-generated and projective as a module over $\kk$ as well as finitely-generated and projective as an $A \otimes A^{\opp}$-module, which is the condition of separability.}
It is striking (though perhaps not surprising) that asking these purely categorical questions identifies important algebraic notions, such as projectivity or separability.

In this paper we will address the analogous questions in higher algebra.
We will replace associative $\kk$-algebras by algebras over the little $n$-disks operad $E_n$ in any sufficiently well-behaved symmetric monoidal $(\infty,N)$-category $\S$.
Such $E_n$ algebras can be understood intuitively as objects in $\S$ equipped with $n$ compatible associative multiplications;
that is, they are $n$-dimensional generalizations of associative algebras.
Such algebras lead to an $(\infty,n+N)$-category $\Alg_n(\S)$ in which, loosely speaking, the objects are $n$-dimensional algebras, the 1-morphisms are $(n-1)$-dimensional algebras in bimodules for the $n$-dimensional algebras, \dots, the $k$-morphisms are $(n-k)$-dimensional algebras in bimodules for the $k-1$-morphisms, and so on.

This broad, flexible framework encompasses situations of interest to contemporary mathematics.
As we explain in greater detail at the end of this introduction, 
interesting examples of such higher categories arise when choosing a well-behaved 2-category $\S$ of $\kk$-linear categories.
In that case, $\Alg_1(\S)$ is a 3-category consisting of monoidal linear categories, bimodule linear categories between them, functors, and natural transformations.
Similarly, $\Alg_2(\S)$ is a 4-category whose objects are braided monoidal linear categories.
Hence our results specialize to situations of importance for, e.g., representation theory.

Within the general framework, we examine when objects admit duals and when $k$-morphisms admit right or left adjoints, in the sense of \cite{LurieTFT}.
Our main result is the $n$-dimensional generalization of the fact that every $\kk$-algebra has a dual given by its opposite.
It verifies a conjecture of Lurie (see Claim 4.1.14 of~\cite{LurieTFT}).
We also give a partial answer to the question of the $(n+1)$-dualizable objects in $\Alg_n(\S)$,
which involves some subtleties which we address later in the introduction.

We mention that despite how natural it is to explore the existence of duals and adjoints for higher morphisms, from a higher categorical perspective,
there is paucity of work in this direction,
particularly for $n > 2$.
Indeed, the only other situation known to us where full $n$-dualizability has been established for large $n$ is by Haugseng, for higher span categories~\cite{HaugsengSpans}. 
This work can be seen as some first fruits in the exploration of these issues.

The rest of the introduction is devoted to fleshing out the the overview just given.
It ends by indicating applications and future directions of further research.

\subsection{Generalizing the Morita 2-category}

The Morita 2-category is a first example of a hierarchy of higher Morita categories that we expect to play an important role in higher algebra.

First, recall that types of algebras --- such as associative or commutative --- are typically governed by operads,
and that there is a sequence of operads 
\[
E_1 \hookrightarrow E_2 \hookrightarrow \cdots \hookrightarrow E_n \hookrightarrow \cdots \hookrightarrow E_\infty
\]
interpolating between the associative and commutative operads,
which correspond to $E_1$ and $E_\infty$, respectively.\footnote{To be more careful, these typically denote operads in a category of topological spaces. 
By viewing a set as a space, one can view the usual associative operad $Ass$ as an operad in spaces,
in which case one can quickly see that $E_1$ and $Ass$ are homotopy equivalent.
Moreover, the commutative operad $Comm$ and the $E_\infty$ operad are weakly equivalent as well. In non-positive characteristic, their algebras agree. Thus, in traditional, non-derived algebra, one does not see the difference.
However, in positive characteristic the situation is more subtle: the homology of free algebras differ in the two cases, leading to the Dyer-Lashof operations.
In the setting of homotopy theory, it is found that $E_\infty$ algebras show up more naturally,
and hence are viewed there as the correct notion of ``commutative algebra.''}
A concrete model for the $E_n$ operad is given by the little $n$-disks operad,
whose $k$-ary operations are parametrized by the space of $k$ disjoint Euclidean $n$-balls embedded into an $n$-ball.
In this sense, an $E_n$ algebra is literally an algebra whose multiplication is parametrized by configurations in an $n$-dimensional space.
Alternatively, the $E_n$ operad is the $n$-fold Boardman-Vogt tensor power of $E_1$,
and hence captures the notion of having $n$ directions of compatible multiplications;
this Dunn-Lurie additivity allows one to approach the theory in a highly algebraic manner.\footnote{There is an extensive literature on $E_n$ algebras, with a lot of activity in recent years. Boardman-Vogt \cite{BV} initiated the subject, but see \cite{May, Dunn} as well. We refer to Chapter 5 of \cite{LurieHA} for a recent extensive discussion of these notions.}
 
The intuitive geometry of the situation suggests that one views the Morita 2-category as follows.
An $E_1$ algebra lives along a real line, 
with the inclusion of disjoint intervals into bigger intervals parametrizing the multiplications.
A module should then live on a boundary point of the line,
as a module involves actions from one side.
Explicitly, we view interior intervals as labeled by the algebra but intervals containing the boundary as labeled by the module.
Geometrically, a bimodule would live on a point that divides a line into two regions,
labeled by the two algebras acting on it.
See Section \ref{sec: fact Alg_1} for a detailed visualization and discussion.

One generalizes to the $n$-dimensional setting by labeling $n$-dimensional disks by $E_n$ algebras
and by labeling a linear hypersurface with an $E_{n-1}$-algebra that has a compatible action of the $E_n$ algebras living in the regions on either side of the hypersurface.
One can imagine linear subspaces of every dimension, 
down to points.
See Section \ref{sec: fact Alg_n} for a detailed visualization of the two-dimensional case and discussion of the general case.

Hence, one can expect that for each positive integer $n$,
there is some kind of $(n+1)$-category whose objects consist of $E_n$ algebras,
whose 1-morphisms consist of $E_{n-1}$-algebras in bimodules between $E_n$ algebras, 
\dots, whose $k$-morphisms consist of $E_{n-k}$ algebras in bimodules between the $E_{n-k+1}$ algebras, and so on for $k \leq n$.
At $k = n$, which geometrically corresponds to points, 
we have objects that are bimodules for $E_1$ algebras.
We work here with an $(\infty,1)$-category of bimodules,
so that between two bimodules, one has a space of morphisms.
(There is a somewhat subtle issue about whether or not to work with $E_0$ algebras in bimodules,
which we discuss in Section \ref{sec pointed comment}.)
When $n = 1$, one recovers a version of the usual Morita 2-category.\footnote{This intuitive idea for how to generalize the Morita 2-category is explained clearly by Lurie in Section 4.1 of \cite{LurieTFT}, but it undoubtedly has a longer history with which we are unfortunately unfamiliar. The basic idea is certainly apparent in the Swiss cheese operad introduced by Voronov \cite{Voronov}, and we have heard Kevin Walker discuss equivalent ideas in early talks on blob homology.}

In this paper we use a rather direct realization of the geometric picture,
developed in \cite{CSThesis, CalaqueScheimbauer, JFS}.
In Section~\ref{sec: rec on morita}, we review that framework before using it to prove our main results.
In brief, one works with factorization algebras that are constructible with respect to the stratified spaces appearing in the intuitive sketch just given.
Lurie has shown that locally constant factorization algebras on $\RR^n$ form an $(\infty,1)$-category equivalent to that of $E_n$ algebras (see Theorem 5.4.5.9),
so that the $k$-morphisms ought to match the idea just sketched,
as they can be identified with $E_{n-k}$ algebras.
An alternative approach, not explored in this paper, would be to use the approach to higher Morita categories developed by Haugseng in \cite{HaugsengEn},
which is completely algebraic and combinatorial in nature and provides a different set of intuitions. For instance, there are useful dualizability and adjointability results in Section 4.6 of \cite{LurieHA} that provide partial answers in this latter algebraic setting.
In Setion \ref{sec one d dictionary} and Remark \ref{rem two d dictionary}, 
we sketch a dictionary between the two approaches.

\subsection{Dualizability and our main result}

Let $\Alg_n(\S)$ denote the factorization model of the Morita $(\infty,n+N)$-category of $n$-dimensional algebras with values in a symmetric monoidal $(\infty,N)$-category $\S$ whose tensor product $\otimes$ preserves sifted colimits separately in each variable.
We note that there is a canonical truncation to an $(\infty,n)$-category $\tau_{(\infty,n)} \Alg_n(\S)$,
which loosely speaking only keeps invertible bimodule maps;
it is the analogue of the 1-categorical version of the usual Morita 2-category.
(Strictly speaking, it is a $(2,1)$-category.)
We devote Section~\ref{sec: rec on morita} to recalling the definition.

Following Lurie \cite{LurieTFT}, it is interesting to ask about duals for objects and adjoints for $k$-morphisms in any a symmetric monoidal $(\infty,n)$-category $\C^\otimes$. 
Lurie uses the phrase $\C$ {\em has duals} to mean that 
\begin{itemize}
\item in the underlying homotopy category, every object {\em has a dual} in the usual sense, and
\item for all $0 < k < n$, every $k$-morphism {\em admits adjoints}, in the sense that viewing $f: X \to Y$ as a 1-morphism in the homotopy 2-category of $\Map_\C(X,Y)$, it admits both a left and a right adjoint in the usual 2-categorical sense.
\end{itemize}
This notion provides a systematic generalization of the 1-categorical notion of dualizability.
In Section~\ref{sec: dualizability}, in greater detail, we review this notion, which we prefer to call ``full $n$-dualizability'' to emphasize the dependence on $n$.
It is certainly natural, simply from a categorical perspective, to ask when $k$-morphisms admit adjoints in any higher category.

Our main result is Theorem \ref{thm main theorem},
which shows that $\Alg_n(\S)$ is ``fully $n$-dualizable''
({\it cf.} Claim 4.1.14 of \cite{LurieTFT}, where it is stated but unproven).

\begin{theorem*}
The symmetric monoidal $(\infty,n)$-category $\tau_{(\infty,n)} \Alg_n(\S)$ underlying the factorization higher Morita category $\Alg_n(\S)$ is fully $n$-dualizable, namely
\begin{itemize}
\item every object has a dual, and
\item for $1\leq k<n$, every $k$-morphism has a left and a right adjoint.
\end{itemize}
\end{theorem*}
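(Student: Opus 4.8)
The plan is to prove both dualizability and adjointability directly in the factorization model, exploiting the fact that the (co)evaluation and (co)unit data, together with the homotopies witnessing the triangle identities, can all be produced from elementary geometric manipulations---folding and straightening---of the stratified Euclidean spaces that index morphisms of $\Alg_n(\S)$. Since full $n$-dualizability in the sense of \cite{LurieTFT} is a condition that can be tested in the underlying homotopy category (for duals of objects) and in the homotopy $2$-categories of the mapping $(\infty,n-1)$-categories (for adjoints of $k$-morphisms), it suffices to exhibit the data and verify the triangle identities only up to these truncation levels. This is precisely where the constructibility of our factorization algebras does the work: a (stratified) isotopy of the underlying space induces an equivalence of the associated constructible factorization algebras.

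First I would treat duals of objects, generalizing the classical fact that $A^{\op}$ is dual to $A$. Given an object $A$, modeled via Lurie's equivalence by a locally constant factorization algebra on $\RR^n$, I take its dual $A^\vee$ to be the reflection of $A$ obtained by reversing one coordinate direction, which is the $n$-dimensional analogue of passing to the opposite multiplication. The evaluation $\ev\colon A\otimes A^\vee\to\unit$ and coevaluation $\mathrm{coev}\colon\unit\to A^\vee\otimes A$ are then the $1$-morphisms carried by the ``capped'' and ``cupped'' stratified half-spaces whose single wall reads $A$ on one side and its reflection on the other (the $E_n$-refinement of the classical description of $\ev$ as $A$ viewed as a bimodule). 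The two composites appearing in the triangle identities correspond to the two $S$-shaped stratified spaces obtained by stacking a fold against an unfold; each is isotopic, rel boundary, to the trivial collar $\RR^{n-1}\times\RR$, and constructibility converts this isotopy into the invertible $2$-morphism witnessing the snake relation.

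Next I would handle adjoints of $k$-morphisms for $1\le k<n$. The key point is the self-similarity of the geometric model: a $k$-morphism $f$ is carried by a factorization algebra on a $k$-fold stratified $\RR^n$, and viewing $f$ as a $1$-morphism inside $\Map_{\Alg_n(\S)}(X,Y)$ reduces its adjointability to exactly the same fold/unfold construction performed in the $k$-th stratifying direction. The candidate left and right adjoints are the reflections of $f$ across the $k$-th wall, and the unit and counit (suitable $(k+1)$-morphisms of $\Alg_n(\S)$) are again the folded configurations, so that the triangle identities reduce to the straightening isotopies one dimension down. Alternatively, one may organize this as an induction on $n$, using the additivity that presents the relevant mapping categories as lower Morita categories, whence adjoints of $k$-morphisms follow from duals of objects in $\Alg_{n-1}$ of an auxiliary target.

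The main obstacle is the homotopy-coherent bookkeeping rather than any single geometric input. Concretely, I expect the delicate step to be promoting the geometric straightening isotopies to genuine invertible higher morphisms in the complete $n$-fold Segal space presenting $\Alg_n(\S)$: showing that the folded factorization algebras assemble into honest morphisms, that the candidate units and counits are well defined up to contractible choice, and that the resulting triangle $2$-cells become invertible after passing to $\tau_{(\infty,n)}\Alg_n(\S)$. Establishing that the reflection operation and the fold/unfold morphisms are suitably functorial, and that the various isotopies can be chosen compatibly across dimensions, is where the real technical effort lies.
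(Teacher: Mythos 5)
Your proposal follows essentially the same route as the paper: the dual of an object is its reflection $\Rr^{\rev}$, the (co)evaluation and (co)unit data are produced by fold/bend maps followed by collapse-and-rescale pushforwards, and the snake/zigzag identities are witnessed by straightening diffeomorphisms of the glued stratified squares, with constructibility converting these into equivalences of factorization algebras. The paper likewise localizes the adjoint construction for a $k$-morphism to the $(k,k+1)$-coordinate plane (reducing to the $\Alg_2$ picture crossed with identities), so your plan matches its proof in both structure and the points you flag as technically delicate.
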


Our arguments are quite geometric in nature, 
exploiting the fact that factorization algebras live on manifolds and hence admit manipulations deriving from that underlying geometry.
Section~\ref{sec: 2d case} is devoted to explaining the argument --- and visualizing it in great detail --- in two dimensions,
so that the intuition guiding the general argument in Section~\ref{sec: arbitrary dim} is as clear as possible.
It is much harder to see many of these manipulations directly in algebraic terms,
and so we feel it makes a good case for the convenience of approaching higher algebra using these factorization models.

\subsection{Relationship with topological field theory}\label{sec intro connection TFT}

The original definition of topological field theory (TFT) in the style of Atiyah and Segal calls for a category of vector spaces as its target \cite{Atiyah}.
If we extend the cobordisms by allowing manifolds with corners (and hence codimension 2 manifolds),
it is natural to take the Morita 2-category as the target,
since the endomorphisms of the unit object $\kk$ is given by the category of vector spaces.\footnote{This idea is commonplace now, but we do not know its original source.}
One can ask to continue to higher codimension, down to points, leading to {\em fully extended} TFTs,
as described by Baez and Dolan~\cite{BaezDolan}.

As we will now quickly sketch,
traditional notions in algebra naturally appear in this setting,
due to the deep relationship between duality (and its higher generalizations) and fully extended TFTs,
as articulated by the Cobordism Hypothesis~\cite{BaezDolan,LurieTFT}.

For instance, every algebra $A$ determines an oriented (equivalently, framed) one-dimensional field theory,
where the positively oriented point is assigned $A$ and the negatively oriented point is assigned its dual $A^{\opp}$.
This theory assigns $A \otimes_{A \otimes A^{\opp}} A = A/[A,A]$ to the circle,
and hence naturally rediscovers the zeroth Hochschild homology, the universal home of trace maps out of $A$.
(If one works with dg algebras, then one recovers the whole Hochschild chain complex.)
In brief, the dualizability condition to be a one-dimensional field theory highlights a striking structural feature of algebras --- the existence of opposites --- and hones in upon the universal trace.

As an even more interesting example,
consider the case of two-dimensional field theories with values in this Morita 2-category,
by which we mean functors of symmetric monoidal 2-categories out of a 2-category of framed bordisms.
Here one wishes to assign an algebra to each framed 0-dimensional manifold (i.e., a finite set of  points), a bimodule to each framed 1-manifold with boundary, and a bimodule map to each framed 2-manifold with corners.
Then such a theory must assign a separable $\kk$-algebra $A$ that is finitely-generated and projective over $\kk$,
and {\it vice versa} so that such an algebra $A$ determines such a TFT.\footnote{The nature of the bordisms also has interesting algebraic consequences.
Schommer-Pries \cite{SPThesis} showed that there is an equivalence of groupoids 
\[
{\rm Fun}^\otimes(\Bord^{\rm or}_2, \Alg_2) \simeq {\rm SepSymFrob}^\sim
\]
between fully extended oriented two-dimensional field theories with values in this Morita 2-category and the separable symmetric Frobenius algebras.
For algebras over a perfect field (e.g., characteristic zero),
these TFTs correspond to finite-dimensional, semisimple algebras with non-degenerate pairing. For the framed result, see \cite{Piotr}.}
Moreover, since the circle admits countably many distinct 2-framings,
one obtains a countable collection of invariants of $A$,
including $A/[A,A]$ and the center $Z(A)$.
(In the dg setting, $Z(A)$ generalizes to the whole Hochschild cochain complex~$Hoch^*(A,A)$.)

These conditions are identical to those that appeared when asking about duals for objects and adjoints for 1-morphisms.
This identification is, of course, the Cobordism Hypothesis applied to the situation of the Morita 2-category as the target, see \cite[Remark 4.1.27]{LurieTFT}

In light of these results, it is natural to ask about higher-dimensional analogues,
which would provide interesting examples of higher-dimensional fully extended TFTS.
In particular, one would need higher categories generalizing the Morita 2-category,
and then one could examine what algebraic properties appear in identifying dualizable objects in these settings.
This paper is a step in this direction,
using a model for the higher Morita categories due to the second author.
Our central result is a direct generalization of the fact that every algebra is dualizable (and hence determines a one-dimensional TFT):
the $n$-dimensional analogue of an algebra (for us, an $E_n$ algebra) is $n$-dualizable,
and more generally, the $(\infty,n)$-category of such $n$-dimensional algebras is fully $n$-dualizable.

We emphasize that we do {\em not} use the Cobordism Hypothesis in this paper,
but directly analyze dualizability, which is a well-posed notion for symmetric monoidal $(\infty,n)$-categories. 
Our result implies that every $E_n$ algebra $\Rr$ is $n$-dualizable. Hence, if one accepts the Cobordism Hypothesis, it determines a fully extended, framed $n$-dimensional field theory
$$\Bord_n^{\mathit{fr}}\xrightarrow{T_\Rr} \Alg_n(\S).$$
Indeed, the second author \cite{CSThesis, CalaqueScheimbauer} explicitly exhibited this class of theories using factorization homology,
so that in conjunction with our results here, 
one obtains another demonstration of the Cobordism Hypothesis in action.

Finally, our result implies more:  
any 1-morphism is ``$(n-1)$-times left and right adjunctible'' and hence, assuming the Cobordism Hypothesis with singularities, determines a defect theory
$$\Bord_n^{\mathit{fr}, \mathit{def}} \xrightarrow{T_\Rr} \Alg_n(\S).$$
If the 1-morphism is given by the geometric $(\Rr,\Ss)$-bimodule $\Aa$, 
this defect theory can be interpreted as a {\em relative} field theory
\[
\begin{tikzpicture}
  \path node (B) {} node[anchor=east] {$\Bord_n^{\mathit{fr}}$} +(2,0) node (C) {} node[anchor=west] {$\Alg_n(\S)$\, .};
  \draw[arrow] (B) .. controls +(1,-.5) and +(-1,-.5) .. coordinate (t)  (C) ;
  \draw[arrow] (B) .. controls +(1,.5) and +(-1,.5) .. coordinate (s)  (C);
  \draw[twoarrowlonger] (s) node[above] {$T_{\Rr}$} -- node[anchor=west] {$Z_{\Aa}$}  (t) node[below] {$T_{\Ss}$};
\end{tikzpicture}
\]
in the sense of Freed-Teleman \cite{FreedTeleman}.\footnote{In \cite{FreedTeleman}, they assume that $\Rr$ and $\Ss$ are even $(n+1)$-dualizable, which we drop in this interpretation.}

\subsection{Pointed comment}\label{sec pointed comment}

Ultimately, one would like to study higher dualizability of the $(\infty,n+N)$-category $\Alg_n(\S)$, and in particular, which objects allow for $(n+1)$-dualizability. This question returns us to a subtle issue in defining the higher Morita categories.

Let us indicate the issue in the classical setting:
the question is whether to work with ``pointed'' bimodules.
Let $\kk$ denote the base commutative ring.
It is common to work with {\em unital} associative algebras, 
which equips each algebra $A$ with a distinguished map $k \to A$.
Hence, there is a forgetful functor from the 1-category of unital $\kk$-algebras and $\kk$-algebra maps down to the category of ``pointed'' $\kk$-modules,
meaning the slice category $\Mod(k)_{k/}$ consisting of $\kk$-modules $M$ equipped with a map $k \to M$.
In constructing a Morita 2-category,
one could similarly choose to work with ``pointed'' bimodules,
so that a 1-morphism from a unital algebra $A$ to a unital algebra $B$ is an $(A,B)$-bimodule $M$ equipped with a pointing $m_0: k \to M$
and a 2-morphism is a map of $(A,B)$-bimodules that preserves the pointings.
In other words, we work with $E_0$ algebras in bimodules.

Working with pointed bimodules is not the classical approach,
but it is reasonable from the heuristic idea motivating the higher Morita category:
we want a $k$-morphism to be an $E_{n-k}$ algebra in bimodules, 
and so an $n$-morphism ought to be an $E_0$ algebra.

From a TFT prespective, the pointings are not unnatural. 
For example, the 2d TFT built from an algebra $A$ naturally equips the relevant bimodules with pointings, 
since the (co)evaluation morphisms are given by $A$ viewed as a bimodule in different ways. Its pointing is given by the unit of the algebra.
More generally, our result shows that the dualizability data for an $E_n$ algebra also enjoys this property.

Pointed bimodules are also natural when taking the factorization algebra approach to higher algebra.
In that setting, the pointing (and unitality of $E_k$ algebras generally) comes from the fact that the empty set is an open subset of any subset,
and hence determines a pointing.
Hence the factorization higher Morita $(\infty,n+1)$-category $\Alg_n(\S)$ works with pointed bimodules.\footnote{One can modify the construction of \cite{CSThesis,CalaqueScheimbauer} to work with unpointed bimodules as $n$-morphisms, but we do not pursue that variant here.}

We remark that for the main theorem about $n$-dualizability,
the choice of (un)point\-ed bimodules is irrelevant.
As will be seen in the arguments, we construct the right and left adjoints of $k$-morphisms by explicit constructions with factorization algebras;
indeed, the adjoints are represented by factorization algebras.
But a factorization algebra is naturally pointed in the sense that it assigns a pointed object to every open set,
due to the structure map determined by the inclusion of the empty set.

One can strengthen this statement as follows. We expect there should be a symmetric monoidal functor of $(\infty,n)$-categories from the higher Morita category with pointings to a ``depointed'' higher category, where bimodules of the $n$-morphisms are not pointed.\footnote{A model for the latter $n$-category is the algebraic higher Morita categories of Haugseng \cite{HaugsengEn} (or the variant of the previous footnote). However, at present there is no construction of such a symmetric monoidal functor in the literature.} 
This functor $U:\Alg_n(\S)\to \Alg_n^{unptd}(\S)$ simply forgets the pointing on the bimodules.
It should be an equivalence on the underlying $(\infty,n-1)$-categories, 
since we always work with unital $E_d$ (or factorization) algebras,
and the lower morphisms have such a structure.
We have shown that every $k$-morphism, for $0\leq k<n$, admits adjoints with natural pointings. 
Thus, it follows that $\Alg_n^{unptd}(\S)$ is fully $n$-dualizable as well.
In particular, every fully extended $n$-dimensional TFT valued in the unpointed version factors through $U$:
$$\begin{tikzcd}
\Bord_n^{\mathit{fr}} \arrow{rr} \arrow[dashed]{dr}&& \Alg_n^{unptd}(\S)\\
&\Alg_n(\S) \,.\arrow[swap]{ru}{U}
\end{tikzcd}$$
In short, the pointings do not affect the impact for $n$-dimensional TFTs of our main result.

On the other hand, the pointings have strong consequences for higher dimensional theories.
At the end of the paper, in Theorem \ref{thm pointing}, we prove the following, which indicates that the pointing forbids interesting extensions beyond dimension~$n$.\footnote{Theo Johnson-Freyd suggested this claim when collaborating with the second author, after he noted the simplest but most crucial case: in the 1-category of pointed $\kk$-vector spaces, the only dualizable object is the one-dimensional vector space $\kk$ itself. We benefited from discussions with him about the meaning and consequences of the general result. See \cite{JFHeis} for his perspective on this result.}

\begin{theorem*}
The unit $\unit$ is the only $(n+1)$-dualizable object in~$\Alg_n(\S)$.
\end{theorem*}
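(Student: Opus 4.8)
The plan is to prove the nontrivial direction — that every $(n+1)$-dualizable object of $\Alg_n(\S)$ is equivalent to $\unit$ — by isolating the single condition that $(n+1)$-dualizability adds to the $n$-dualizability already supplied by Theorem \ref{thm main theorem}, and then showing that, because of the pointings, only the unit can satisfy it. The converse is immediate and I would dispatch it first: the unit $\unit$ of any symmetric monoidal $(\infty,m)$-category is fully dualizable (it is its own dual, with identity (co)evaluations and trivial adjunction data at every level), so in particular $\unit$ is $(n+1)$-dualizable. To unwind the forward direction, recall that by Theorem \ref{thm main theorem} the object $\Rr$ already has a dual and all the $k$-morphisms ($1\le k<n$) appearing in the tower of (co)evaluation and adjunction data admit left and right adjoints. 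The only genuinely new requirement imposed by $(n+1)$-dualizability is that the $n$-morphisms occurring in that tower — the units and counits of the lowest adjunctions — themselves admit adjoints. Thus the entire question reduces to the adjointability of certain explicit $n$-morphisms of $\Alg_n(\S)$.

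Next I would identify the local structure at the top level. An $n$-morphism of $\Alg_n(\S)$ is a pointed bimodule, and in the factorization model its pointing is canonical: the inclusion $\emptyset\hookrightarrow U$ produces a structure map $\unit\to F(U)$ on values. Fixing two parallel $(n-1)$-morphisms, the $n$-morphisms and higher between them assemble into an $(\infty,N)$-category whose relevant composition and monoidal structure is that of pointed objects of $\S$, i.e. objects $M\in\S$ equipped with a map $\unit\to M$. In this local picture the adjointability of an $n$-morphism is equivalent to the dualizability of the corresponding pointed object. Tracing the canonical $\emptyset$-pointings through the adjunction tower built out of $\Rr$, I would pin down the specific pointed object that is forced to be dualizable, namely the value of $\Rr$ on a disk together with its unit pointing $\unit\to\Rr(D)$ (equivalently, the appropriate Hochschild-type invariant carrying its canonical pointing).

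I would then prove the key lemma, which generalizes the base case recorded in the footnote: in the slice $\S_{\unit/}$ of pointed objects, with induced symmetric monoidal structure $(M,p)\otimes(M',p')=(M\otimes M',\,p\otimes p')$ and unit $(\unit,\id)$, a pointed object $(M,p)$ is dualizable only if $p$ is an equivalence. The argument runs as follows. The forgetful functor $\S_{\unit/}\to\S$ is symmetric monoidal, so a dualizable $(M,p)$ is dualizable in $\S$ with the same evaluation and coevaluation. But $(\unit,\id)$ is the initial object of $\S_{\unit/}$, so the coevaluation $\unit\to M^\vee\otimes M$, being the unique morphism out of the initial object into $(M^\vee\otimes M,\,p^\vee\otimes p)$, must coincide with the pointing $p^\vee\otimes p$. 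Feeding this identification into a triangle identity exhibits $\id_M$ as factoring through $p$, so $p$ is a split (and, in the relevant cases, genuine) equivalence. This is exactly the statement that among pointed $\kk$-vector spaces only $\kk$ itself is dualizable, now in the generality of $\S$.

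Combining these steps, the lemma forces the pointed object extracted above to be equivalent to $\unit$, that is $\Rr(D)\simeq\unit$ compatibly with the pointing, which propagates to an equivalence $\Rr\simeq\unit$ in $\Alg_n(\S)$. I expect the main obstacle to be the second step: carefully localizing the $(n+1)$-st adjointability obstruction to a single pointed object of $\S$, and tracking the canonical $\emptyset$-pointings through the whole tower of (co)evaluations and adjunctions, carried out one dimension beyond the geometric argument of Theorem \ref{thm main theorem}. Once that reduction is in place, the remaining steps are formal.
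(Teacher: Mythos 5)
Your key lemma is correct, and it is essentially the observation attributed to Johnson-Freyd in the paper's footnote: in $\S_{\unit/}$ the unit $(\unit,\id)$ is initial, so the coevaluation of a dualizable pointed object $(M,p)$ is forced to be the pointing $p\otimes p^\vee$, and the triangle identity then yields $\id_M = p\circ q$ with $q=\mathrm{ev}\circ(p^\vee\otimes\id_M)$, while pointedness of $\mathrm{ev}$ gives $q\circ p=\id_\unit$, so $p$ is a genuine (not merely split) equivalence. This cleanly handles what amounts to the case of pointed $(\unit,\unit)$-bimodules.

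The genuine gap is the reduction you defer to ``the second step.'' The $n$-morphisms whose adjointability is forced by $(n+1)$-dualizability are the units and counits of the lowest adjunctions in the tower, and these are pointed bimodules over \emph{nontrivial} algebras built from $\Rr$ by folding; the Hom-category in which their adjoints must live is pointed $(\Aa,\Bb)$-bimodules with relative tensor product over $\Aa$ and $\Bb$, which is not $\S_{\unit/}$ unless both bounding $(n-1)$-morphisms are the unit. Your lemma therefore does not apply to them directly, and the proposed localization to a single pointed object of $\S$ fails in the most natural form: the pointed object one can canonically extract by iterating dimensions (a Hochschild-type invariant such as $\int_{T^n}\Rr$ with its canonical pointing) being dualizable in $\S_{\unit/}$ is a strictly weaker condition. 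Already for $n=1$ and $A=M_k(\kk)$ in characteristic prime to $k$, the dimension $A/[A,A]\simeq\kk$ is a perfectly dualizable pointed object, yet $A$ is not $2$-dualizable in the pointed Morita category. Nor is it clear why $(n+1)$-dualizability would force $\Rr(D)$ itself, with its unit pointing, to be dualizable in $\S_{\unit/}$. The paper closes exactly this gap by proving Proposition \ref{thm: pointed adjoints are equivalences}: for a pointed $(A,B)$-bimodule over \emph{arbitrary} algebras, possession of an adjoint among pointed bimodules forces the unit and counit to be invertible and yields equivalences $A\simeq M\simeq N\simeq B$ in $\S$ (Lemma \ref{lem: equiv in S}, an explicit diagram chase combining the zigzag identities with the maps $m_0^A$, $n_0^B$, etc.). It then runs a downward induction through the adjunction tower using Lemma \ref{lem: inverses from adjoints} to propagate invertibility from the top-level $n$-morphisms to the (co)evaluation, and only at the very end extracts $\Rr\simeq\unit$ from the pointing. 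That bimodule-level statement over general $A$ and $B$ is where the mathematical content lies, and your plan treats it as formal bookkeeping.
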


Note the divergence with case $n=1$ and the classical, unpointed Morita 2-category,
where finitely-generated and projective, separable algebras provide 2-dualizable objects.

\subsection{Connections with other results on duals and adjoints}

Full $2$-du\-al\-iza\-bility has been studied for several 2-categories besides the Morita bicategory.
There are, for instance, several 2-categories that ``deloop'' the category of vector spaces.
Thankfully,
it was shown in \cite[Appendix]{BDSPV} that the fully 2-dualizable subcategories are equivalent. 
This result gives a satisfying answer: for extended 2-dimensional TFTs, we do not have to worry about which of these targets we should choose.

Besides the usual Morita 2-category, variants of $\Alg_n(\S)$ have been investigated thoroughly in  the two settings, briefly mentioned already in the beginning of this introduction. In both situations, $\S$ is a well-behaved 2-category of $\kk$-linear categories, which we will denote by $\mathrm{Cat}_\kk$.\footnote{The technical conditions on $\mathrm{Cat}_\kk$ are crucial, but we choose not to emphasize them here. We refer the interested reader to the references.}

The first case examines $n=1$. 
In \cite{DSPS} and then generalized in \cite{BJS}, 
the authors study a 3-category of tensor categories, bimodule categories, functors, and natural transformations. 
It is a sub-3-category of $\Alg_1^{unptd}(\mathrm{Cat}_\kk)$ whose objects are certain tensor categories. 
In this setting, our main theorem establishes (full) 1-dualizability. 
However, Douglas--Schommer-Pries--Snyder and Brochier--Jordan--Snyder show even more: 
the extra finiteness conditions on objects and morphisms ensure full 2-dualizability and even partial higher dualizability. 
Finally, they establish that 3-dualizable objects are exactly the separable tensor categories (in characteristic zero, it amounts to fusion categories).

The second case examines $n=2$. 
In \cite{BJS}, the authors study a 4-category of braided tensor categories, tensor categories with central structures (bimodules), centered bimodule categories, functors, and natural transformations. 
More precisely, they study a sub-4-category of $\Alg_2^{unptd}(\mathrm{Cat}_\kk)$ whose objects are certain rigid braided tensor categories. 
Our main theorem establishes full 2-dualizability. 
The authors also obtain this result, in this specific categorical setting, by constructing the duals and adjoints by hand. 
But Brochier--Jordan--Snyder show even more: 
the extra finiteness conditions ensure full 3-dualizability. 
Finally, they establish that 4-dualizable objects are exactly the separable braided tensor categories (in characteristic zero it amounts to fusion categories). 

\begin{remark}
We mention that although we emphasize here questions about duals and adjoints,
there are concrete applications of these results.
One motivation for \cite{BJS} is to extend results of \cite{BZBJ},
who analyze the 2-dimensional TFTs determined by the braided tensor categories of representations of quantum groups.
They find rich connections with recent representation theory, such as Alekseev's moduli algebras.
The crucial tool is factorization homology, which yields the 2-dimensional TFTs as shown  in~\cite{CSThesis, CalaqueScheimbauer}.
\end{remark}

In light of our second main theorem, it is clear that both sets of authors work in an unpointed higher Morita category.
In future work, we plan to examine higher dualizability (i.e., existence of adjoints for higher morphisms) in unpointed versions of the higher Morita categories.

\subsection{Future directions}

Our motivation for seeking such results is rooted in our current work on TFT, growing out of the discussion in Section~\ref{sec intro connection TFT}.
In many situations, the object or $k$-morphism simply does not satisfy the necessary higher dualizability/adjunctability conditions: 
for example, most algebras are not finitely generated, projective, and separable.
It is possible to evade such strict conditions by switching to a relative situation {\em \`a la} Stolz-Teichner \cite{StolzTeichner}. 
Here one works with {\em twisted} field theories, which are natural transformations between functorial field theories. 
In the fully extended topological case, a precise definition and characterization in terms of dualizability is available in \cite{JFS}, predicated on the Cobordism Hypothesis.
Combining that result with our results here, one obtains that an $n$-dimensional\footnote{Even though this appears to be the same dimension as the setting we have previously described, the natural transformation $Z_{\Aa}$ should really be thought of as assigning values to certain very simple $(n+1)$-dimensional bordisms with defects.} twisted field theory, realized as a symmetric monoidal (op)lax natural transformation
\[
\begin{tikzpicture}
  \path node (B) {} node[anchor=east] {$\Bord_n^{\mathit{fr}}$} +(2,0) node (C) {} node[anchor=west] {$\Alg_n(\S)$ \, ,};
  \draw[arrow] (B) .. controls +(1,-.5) and +(-1,-.5) .. coordinate (t)  (C) ;
  \draw[arrow] (B) .. controls +(1,.5) and +(-1,.5) .. coordinate (s)  (C);
  \draw[twoarrowlonger] (s) node[above] {$T_{\Rr}$} -- node[anchor=west] {$Z_{\Aa}$}  (t) node[below] {$T_{\Ss}$};
\end{tikzpicture}
\]
is fully determined by a 1-morphism in $\Alg_n(\S)$ with the property that the $n$-morphisms appearing as units and counits for the adjunctions exhibiting $(n-1)$-adjunctibility have {\em certain} adjoints (more precisely, we require exactly half of these adjoints to exist).
This paper exhibits precisely which conditions one needs to check,
since we provide explicit constructions of the units and counits.

In a companion paper, we study low-dimensional examples of this twisted situation.
We show, for example, that when $n=2$, an algebra $\Aa$ provides a twisted field theory to its center $Z(\Aa)$ if and only if $\Aa$ is Azumaya. 
In the underived setting this amounts to being finitely generated and projective as a $Z(\Aa)$-module and separable as a $Z(\Aa)$-algebra.
This condition relaxes the 2-dualizability condition in a substantial but interesting way.

\subsection{Guide to reader}

In the body of the text, we assume the reader is familiar with $(\infty,N)$-categories, $E_n$ algebras, and factorization algebras, although we give informal descriptions and refer the reader to the precise definitions elsewhere.
Therefore, someone not familiar with the technical detail could follow many of arguments if they understand the geometric intuition and are comfortable with the usual Morita 2-category. 
In particular, our approach is essentially agnostic about many details of higher categories, although we will implicitly work with a complete higher Segal model of the factorization higher Morita category $\Alg_n(\S)$.

In Section \ref{sec: dualizability} we recall the definitions of ``having duals'' and ``adjoints'' in a higher category. In Section \ref{sec: rec on morita} we recall the factorization higher Morita category. We start with an informal discussion of the factorization Morita 2-category in Subsection \ref{sec: fact Alg_1}, which requires no prior knowledge of higher categories. We explain the general case in Subsection \ref{sec: fact Alg_n}, with many pictures to informally understand the case $n=2$. The technical conditions for the existence can be found in Theorem \ref{thm existence Morita}. In Section \ref{sec: 2d case} we explain the proof of our main theorem for the two-dimensional case with many pictures to visualize the argument. This can be understood without technical details. The full proof of the main theorem is the content of Section \ref{sec: arbitrary dim}. Finally, Section \ref{sec pointings} deals with pointings and how they prevent higher dualizability. 
Finally, in the appendix we prove that the higher Morita category is symmetric monoidal.

\subsection{Acknowledgements}

This project has benefited from a whole community of researchers who have dwelled on closely related problems and whose interest in our work has kept us motivated.
The impetus to prove the results in this paper was our exploration of twisted TFTs in the style of Stolz-Teichner;
we thank Stephan Stolz and Peter Teichner for inspiring conversations and persistent encouragement over many years.
Rune Haugseng has been a frequent interlocutor on the topics studied here, 
testing our approach with his,
and we thank him for his generosity with his ideas, critiques, technical aid, and non-mathematical company.
Theo Johnson-Freyd suggested the final result, and he has offered a lot of perspective on how to use factorization algebras to organize ideas in physics and higher algebra.
We have also had interesting conversations with Ben Antieau, David Ben-Zvi, Adrien Brochier, David Gepner, David Jordan, Chris Schommer-Pries, and Noah Snyder. We would also like to thank Rune Haugseng, Aaron Mazel-Gee, and Pavel Safronov for catching immediately a mistaken footnote in the first version.

Finally, this work was begun in Room B17 of the Max Planck Institute for Mathematics,
when OG and CS were both postdocs there, and we deeply appreciate the 
open and stimulating atmosphere of MPIM that made it so easy to begin and continue our collaboration. CS was partially supported by the Swiss National Science Foundation, grant P300P2\textunderscore 164652.

\section{Recollection: dualizability}
\label{sec: dualizability}

We very briefly recall the definitions of dualizability and adjointability in a (symmetric monoidal) $(\infty,N)$-category. We refer to \cite{LurieTFT} for more details and to \cite{HaugsengSpans} for a discussion internal to $N$-fold Segal spaces.

\begin{defn}
Let $(\C, \otimes, \unit)$ be a category $\C$ with symmetric monoidal structure $\otimes$ and monoidal unit $\unit$. For an object $\Rr$ in $\C$ a {\em dual} is an object $\Rr^\vee$ together with an evaluation map $\mathrm{ev}:\Rr^\vee \otimes \Rr \to \unit$ and a coevaluation map $\mathrm{coev}:\unit \to \Rr\otimes \Rr^\vee$ such that the snake identities
\begin{align*}
(\id_{\Rr}\otimes \operatorname{ev}) \circ (\operatorname{coev}\otimes \id_{\Rr}) & = \id_{\Rr}, \quad\text{and}\\
( \operatorname{ev} \otimes \id_{\Rr}) \circ (\id_{\Rr} \otimes \operatorname{coev})& = \id_{\Rr}.
\end{align*}
hold. In this case, we say that $\Rr$ is {\em dualizable} or {\em 1-dualizable}.
\end{defn}

\begin{defn}
Let $\C$ be a bicategory and $R: \Rr \to \Ss$ and $L:\Ss\to\Rr$ two 1-morphisms in $\C$. We say that $L$ is a {\em left adjoint of $R$} and $R$ is a {\em right adjoint of $L$} if there are two 2-morphisms
$$u: \id_\Ss \Rightarrow R\circ L \quad\text{and}\quad c: L\circ R\Rightarrow \id_\Rr$$
called the {\em unit} and {\em counit} of the adjunction, respectively, such that
\begin{align*}
L = L\circ \id_\Ss \xRightarrow{\id_L \circ u} &L\circ R \circ L \xRightarrow{c \circ \id_L } \id_\Rr\circ L \quad\text{and} \\
R = \id_\Ss \circ R \xRightarrow{u \circ \id_R} & R \circ L \circ R \xRightarrow{\id_R \circ c } R \circ \id_\Rr
\end{align*}
are identities.
(We will also refer to this as snake identities.)
\end{defn}
Now we lift these definitions to the higher categorical setting. To simplify notation, by ``0-morphism'' we mean an object.
\begin{defn}Let $\C$ be an $(\infty, N)$-category.
\begin{itemize}
\item  A 1-morphism $f: \Rr \to \Ss$ has a {\em left (or right) adjoint} if it has a left (or right) adjoint in the homotopy bicategory $h_2 \C$.
\item If $k\geq 2$, fix two $(k-2)$-morphisms $\Rr$ and $\Ss$ in $\C$ and let $\Aa$ and $\Bb$ be two $(k-1)$-morphisms from $\Rr$ to $\Ss$.
A $k$-morphism $f:\Aa\to \Bb$ has a {\em left (or right) adjoint} if it has left (or right) adjoint in the homotopy bicategory $h_2 \C(\Rr, \Ss)$.
\item If furthermore $\C$ is symmetric monoidal, an object $\Rr$ is {\em dualizable} if it is dualizable in the homotopy category of $\C$.
\end{itemize}
\end{defn}

\begin{remark}
When proving dualizability and adjunctibility we will work within the $(\infty,N)$-category and show the snake identities hold up to equivalence. This implies that in the homotopy (bi)categories, they are identities.
\end{remark}

\begin{defn}
A symmetric monoidal $(\infty,N)$-category $\C$ is said to be {\em fully $n$-dualizable} if every object is dualizable and for every $1\leq k<n$, every $k$-morphism has a left and right adjoint.
\end{defn}

\begin{remark}
In \cite{LurieTFT} the terminology is slightly different: a fully $n$-dualizable $(\infty,N)$-category is said to ``have duals''.
Conversely, $\C$ from above is fully $n$-dualizable in our sense if the underlying $(\infty,n)$-category of $\C$, given by discarding non-invertible higher morphisms, has duals. We choose to include the dependence on $n$ in the notation since the property changes dramatically when increasing $n$, as we will see later.
\end{remark}

We make a convenient observation about detecting $n$-dualizability.

\begin{lemma}\label{lem truncation detects dualizability}
Let $N \geq n$. 
If $\C$ is a symmetric monoidal $(\infty,N)$-category,
then it is fully $n$-dualizable if and only if its $(\infty, n)$-truncation is fully $n$-dualizable.
\end{lemma}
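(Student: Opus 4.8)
The plan is to unwind the definition: full $n$-dualizability of a symmetric monoidal $(\infty,N)$-category is, verbatim, a conjunction of conditions each of which is tested in a \emph{homotopy category} or a \emph{homotopy bicategory} of an iterated mapping category of $\C$. I would then check that every homotopy (bi)category entering these conditions is literally unchanged by passing to the $(\infty,n)$-truncation. The engine is a single observation: for an $(\infty,M)$-category $\D$ one has $h_1\D\simeq h_1\tau_{(\infty,m)}\D$ for $m\geq1$ and $h_2\D\simeq h_2\tau_{(\infty,m)}\D$ for $m\geq2$, because $h_j\D$ depends only on the morphisms of level $\leq j$ together with the relation of being joined by an invertible $(j+1)$-morphism, and $\tau_{(\infty,m)}$ retains all of this whenever $m\geq j$ --- it only ever deletes \emph{non-invertible} morphisms of level $>m$. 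Since $\tau_{(\infty,n)}\C$ is obtained from $\C$ in exactly this way and inherits its symmetric monoidal structure, the lemma reduces to bookkeeping about which levels each condition can see.

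First I would treat the object condition. An object $\Rr$ is dualizable if and only if it is dualizable in the symmetric monoidal homotopy category $h_1\C$, and $h_1\C$ depends only on objects, $1$-morphisms, and the relation of being connected by an invertible $2$-morphism (equivalently, on $\pi_0$ of the cores of the mapping categories). For $n\geq1$ all of this survives $\tau_{(\infty,n)}$, so $h_1\C\simeq h_1\tau_{(\infty,n)}\C$ as symmetric monoidal categories, and object-dualizability agrees on the two sides.

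Next I would treat, for each $1\leq k<n$, the adjointability of $k$-morphisms. Fix $(k-2)$-morphisms $\Rr,\Ss$; by definition a $k$-morphism $f\colon\Aa\to\Bb$ admits a left/right adjoint exactly when it does so in $h_2\,\C(\Rr,\Ss)$, where $\C(\Rr,\Ss)$ is the iterated mapping $(\infty,N-k+1)$-category. The data and axioms of such an adjunction involve only the adjoint $k$-morphism, the unit and counit (which are $(k+1)$-morphisms), and the snake identities, which are equalities of homotopy classes of $(k+1)$-morphisms and hence require knowing when two $(k+1)$-morphisms are joined by an invertible $(k+2)$-morphism. Thus $h_2\,\C(\Rr,\Ss)$ uses only morphisms of level $\leq k+1$ together with the invertible $(k+2)$-morphisms. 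Writing $m=n-k+1$, the mapping category of the truncation is $\tau_{(\infty,m)}\C(\Rr,\Ss)$, and since $k<n$ gives $m\geq2$, the engine yields $h_2\,\C(\Rr,\Ss)\simeq h_2\,\tau_{(\infty,m)}\C(\Rr,\Ss)$. Hence $k$-morphism adjointability agrees on the two sides for every such $k$, and combined with the object condition this proves the equivalence.

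The step I expect to be the real (if modest) obstacle is precisely this level bookkeeping, and in particular the boundary case $k=n-1$. There $k+1=n$ and $k+2=n+1$: the adjunction axioms use the top non-invertible layer (level $n$) and the homotopy relation on it, which is governed by the invertible morphisms one level higher (level $n+1$). These are exactly the data the $(\infty,n)$-truncation is built to preserve --- all of levels $\leq n$, and above that only invertibles --- which is why the statement is sharp at $n$ and, as the surrounding discussion stresses, fails once one probes structure genuinely beyond level $n$. Care is needed only to confirm that the cores of the relevant mapping categories, and hence the $\pi_0$'s and homotopy relations feeding into $h_1$ and $h_2$, are computed identically before and after truncation; granting that, the argument is just a matching of definitions.
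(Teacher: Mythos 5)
The paper states this lemma without proof, treating it as an immediate observation, so there is no written argument to compare against; your proposal supplies exactly the definitional unwinding the authors evidently had in mind, and it is correct. The level bookkeeping is right: each condition in the definition of full $n$-dualizability is tested either in $h_1\C$ or in $h_2\,\C(\Rr,\Ss)$ for $(k-2)$-morphisms $\Rr,\Ss$ with $k<n$, and these homotopy (bi)categories depend only on morphisms of level $\leq k+1\leq n$ together with the invertible morphisms one level higher, all of which $\tau_{(\infty,n)}$ preserves.
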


\section{Recollection: the factorization higher Morita category \texorpdfstring{$\Alg_n(\S)$}{Algn(S)}}
\label{sec: rec on morita}

In this section we recall the factorization higher Morita category from \cite{CalaqueScheimbauer}. We will illustrate it thoroughly in the case $n=1$ and $n=2$ to give intuition for the general case and be brief on the general construction. It depends on the dimension parameter $n$ and an input symmetric monoidal $(\infty,N)$-category, which is ``nice''. We give the precise conditions for what it means to be nice to ensure existence in Theorem \ref{technical condition}. 
Examples to keep in mind are spaces with the Cartesian product or chain complexes with tensor product (direct sum is also interesting), both for $N=1$. For $N=2$, a (suitable) bicategory of locally finitely presentable $\kk$-linear categories is an example.

\subsection{The factorization model for the Morita 2-category~\texorpdfstring{$\Alg_1(\S)$}{Alg1(S)}}
\label{sec: fact Alg_1}

In this subsection, we discuss the one-dimensional situation. We first explain the types of factorization algebras we will need, then introduce some simple piece-wise linear maps on $(0,1)$ which will be important for source, target, and composition. Finally, in Section~\ref{sec: one d Morita}, we explain the symmetric monoidal $(\infty,2)$-category~$\Alg_1(\S)$. To compare with the usual Morita bicategory, we add a dictionary at the end of this subsection.

\subsubsection{Constructible one-dimensional factorization algebras}

We will work with a restricted class of one-dimensional factorization algebras: constructible factorization algebras on the interval $(0,1)$ with respect to a stratification given by a finite set of points. 
To illustrate such a stratification, we use a different color for each connected component of the complement of the points, as in the following picture.

\begin{center}
\begin{tikzpicture}[scale=2]
\draw[bluebg, very thick] (-2,0) -- (-1.2,0);
\draw[redbg, very thick] (-1.2,0) -- (0.7,0);
\draw[greenbg, very thick] (0.7,0) -- (2,0);

\draw (2, 0) node [anchor=north] {1};
\draw (-2, 0) node [anchor=north] {0};
\fill (-1.2,0) circle (0.1em) node [anchor=south] {$s_1$};
\fill (0.7,0) circle (0.1em) node [anchor=south] {$s_2$};
\end{tikzpicture}
\end{center}

A constructible factorization algebra for such a stratification is determined (up to equivalence) by its values on the ``basic'' intervals and by the structure maps between them.
There are two kinds of basic intervals: those that contain none of the points (i.e., colored in just one color) and those that contain exactly one point (i.e., colored in two colors). A basic interval with no points can include to a basic interval with one point, but not {\it vice versa}.
Moreover, ``constructibility\footnote{We believe the phrase ``locally constant with respect to the stratification'' might be more descriptive.}'' means that inclusions of a small basic interval into a larger one of the same type must label a structure map that is an equivalence.
Finally, one can include several basic intervals with no points of the same color into a larger one with no point of the same color.

For the stratification picture above, a constructible factorization algebra is determined by the values of five different intervals (e.g.~the fat regions in the picture below) together with structure maps for them.
\begin{center}
\begin{tikzpicture}[scale=2]
\draw[bluebg, very thick] (-2,0) -- (-1.2,0);
\draw[redbg, very thick] (-1.2,0) -- (0.7,0);
\draw[greenbg, very thick] (0.7,0) -- (2,0);

\draw[bluecirc, line width=4pt] (-1.85,0) -- (-1.55,0)
(-1.35,0) -- (-1.2,0);
\draw[redcirc, line width=4pt] (-1.2,0) -- (-1.05,0)
(-0.5,0) -- (0,0)
(0.5,0) -- (0.7,0);
\draw[greencirc, line width=4pt] (0.7,0) -- (0.9,0)
(1.3, 0) -- (1.7,0);

\draw (2, 0) node [anchor=north] {1};
\draw (-2, 0) node [anchor=north] {0};
\fill (-1.2,0) circle (0.1em) node [anchor=south] {$s_1$};
\fill (0.7,0) circle (0.1em) node [anchor=south] {$s_2$};
\end{tikzpicture}
\end{center}
We discuss in detail how to interpret the pictures in \ref{sec one d dictionary}, after formulating our factorization version of the Morita 2-category. In brief, we view each one-colored interval as labeling an algebra and each a two-colored interval as labeling a bimodule.

\subsubsection{Collapse-and-rescale maps}\label{sec: collapse and rescale}

We recall certain piecewise linear maps from \cite{CalaqueScheimbauer} which we will need several times later on, for example for the source and target maps and the composition of 1-morphisms.

\begin{defn}
\label{def: collapse}
Let $0\leq b \leq a \leq 1$ such that $(b,a)\neq (0,1)$. Let $\varrho^b_a: [0,1]\to [0,1]$ be the piecewise linear map which {\em ``collapses''} the interval $[b, a]$ to a point and {\em rescales} the complement back to $[0,1]$. More precisely, if $b<a$, let
$$
\varrho^b_a(x)=
\begin{cases}
\frac{x}{1-(a-b)}, &x \leq b,\\
\frac{b}{1-(a-b)}, & b\leq x \leq a,\\
\frac{x-(a-b)}{1-(a-b)}, & a\leq x.
\end{cases}
$$
If $a = b$, let $\varrho^b_a=id_{[0,1]}$. 
\end{defn}

When $0< b < a < 1$, the map $\varrho^b_a$ sends everything in the interval $[b,a]$ to the point $\frac{b}{1-(a-b)}$ (``collapse''), which is depicted by the area between the dashed lines in the picture below. Everything outside that closed interval is rescaled to create a bijection onto $[0,1]$.
\begin{center}
\begin{tikzpicture}[scale=0.5]
\draw (0,0) -- (10,0);

\draw (9.9, 0.25) -- (10, 0.25) node [anchor=south] {\tiny $1$} -- (10,-0.25) -- (9.9, -0.25);
\draw (0.1, 0.25) -- (0, 0.25) node [anchor=south] {\tiny $0$} -- (0,-0.25) -- (0.1, -0.25);

\fill (2, 0) circle (0.2em) node [anchor=south] {\tiny $b$};
\fill (7, 0) circle (0.2em) node [anchor=south] {\tiny $a$};

\draw (0,-3) -- (10,-3);
\begin{scope}[shift={(0,-3)}]
\draw (9.9, 0.25) -- (10, 0.25) -- (10,-0.25) -- node [anchor=north] {\tiny $1$} (9.9, -0.25);
\draw (0.1, 0.25) -- (0, 0.25) -- (0,-0.25) node [anchor=north] {\tiny $0$} -- (0.1, -0.25);
\end{scope}

\fill (4, -3) circle (0.2em) node [anchor=north] {\tiny $\frac{b}{1-(a-b)}$};

\draw[dashed] (2,0) -- (4,-3) -- (7,0);
\draw[->] (8.5, -0.5) -- node[right] {\tiny $\varrho^b_a$} (8.5, -2.5);
\end{tikzpicture}
\end{center}

For $b=0$, depicted in the left picture below, the map $\varrho^0_a$ collapses everything to the left of $a$ to 0 and rescales $(a,1]$ to $(0,1]$. Dually, for $a=1$, depicted in the right picture below, the map $\varrho^b_1$ collapses everything to the right of $b$ to 1 and rescales $[0,b)$ to $[0,1)$.
\begin{center}
\begin{tikzpicture}[scale=0.4]
\draw (0,0) -- (10,0);
\draw (9.9, 0.25) -- (10, 0.25) node [anchor=south] {\tiny $1$} -- (10,-0.25) -- (9.9, -0.25);
\draw (0.1, 0.25) -- (0, 0.25) node [anchor=south] {\tiny $0$} -- (0,-0.25) -- (0.1, -0.25);

\fill (2, 0) circle (0.2em) node [anchor=south] {\tiny $a$};

\draw (0,-3) -- (10,-3);
\begin{scope}[shift={(0,-3)}]
\draw (9.9, 0.25) -- (10, 0.25) -- (10,-0.25) node [anchor=north] {\tiny $1$} -- (9.9, -0.25);
\draw (0.1, 0.25) -- (0, 0.25) -- (0,-0.25) node [anchor=north] {\tiny $0$} -- (0.1, -0.25);
\end{scope}

\draw[dashed] (0,0) -- (0,-3);
\draw[dashed] (2,0) -- (0,-3);
\draw[->] (8.5, -0.5) -- node[right] {\tiny $\varrho^0_a$} (8.5, -2.5);
\end{tikzpicture}
\hspace{1cm}
\begin{tikzpicture}[scale=0.4]
\draw (0,0) -- (10,0);
\draw (10, 0.25) node [anchor=south] {\tiny $1$};
\draw (9.9, 0.25) -- (10, 0.25) node [anchor=south] {\tiny $1$} -- (10,-0.25) -- (9.9, -0.25);
\draw (0.1, 0.25) -- (0, 0.25) node [anchor=south] {\tiny $0$} -- (0,-0.25) -- (0.1, -0.25);

\fill (7, 0) circle (0.2em) node [anchor=south] {\tiny $b$};

\draw (0,-3) -- (10,-3);
\begin{scope}[shift={(0,-3)}]
\draw (9.9, 0.25) -- (10, 0.25) -- (10,-0.25) -- node [anchor=north] {\tiny $1$} (9.9, -0.25);
\draw (0.1, 0.25) -- (0, 0.25) -- (0,-0.25) node [anchor=north] {\tiny $0$} -- (0.1, -0.25);
\end{scope}

\draw[dashed] (10,0) -- (10,-3);
\draw[dashed] (7,0) -- (10,-3);
\draw[->] (1.5, -0.5) -- node[left] {\tiny $\varrho^b_1$} (1.5, -2.5);
\end{tikzpicture}
\end{center}

\subsubsection{The factorization model for the Morita 2-category}
\label{sec: one d Morita}

The basic idea for the symmetric monoidal factorization Morita $(\infty,2)$-category $\Alg_1(\S)$ is to use the equivalence (Theorem 5.4.5.9 of \cite{LurieHA}) between $E_1$ algebras in $\S$ and locally constant factorization algebras on the interval $(0,1)$ valued in $\S$.
They will be the the objects of the higher category, which we will informally describe by describe the objects, 1, and 2-morphisms.

\paragraph{Objects}
An object (``0-morphism'') in $\Alg_1(\S)$ is just a locally constant factorization algebra $\Rr$ on $(0,1)$. We will depict an object by a little interval, which represents the space $(0,1)$ on which $\F$ lives:
\begin{center}
\begin{tikzpicture}[scale=2]
\draw[draw =bluebg, very thick] (0,0) -- node[anchor=north] {\scriptsize object} (1,0);
\end{tikzpicture}
\end{center}

\paragraph{1-morphisms}
An identity morphism is the same data as the object it lives on, namely a locally constant factorization algebra $\Rr$ on $(0,1)$.

A (generic) 1-morphism is a pair consisting of
\begin{enumerate}
\item a stratification of $(0,1)$ given by a single point $s$, and  
\item a factorization algebra $\F$ on $(0,1)$ that is constructible with respect to the stratification.
\end{enumerate}
We depict a morphism by the stratified space on which $\F$ lives:
\begin{center}
\begin{tikzpicture}[scale=2]
\begin{scope}[xshift=2cm]
\draw[draw =bluebg, very thick] (0,0) -- (0.3, 0) node[anchor=south] {$s$};
\draw[draw =redbg, very thick] (0.3, 0) -- (1,0) ;
\draw (0.3, 0) node[dot] {};
\path (0,0) -- node[anchor=north] {\scriptsize 1-morphism} (1,0);
\end{scope}
\end{tikzpicture}
\end{center}

Our convention is to read pictures from left to right: for a 1-morphism as pictured above, its source is the restriction of the factorization algebra $\F$ on the left of the stratification, i.e.~the blue part, rescaled to $(0,1)$ by pushing forward along $(\varrho^b_1)^{-1}$. Similarly, the target is the restriction of $\F$ to the red part rescaled by pushing forward along $(\varrho^0_b)^{-1}$.

\paragraph{Composition}
Morphisms are composed in two steps:

First we glue together factorization algebras over the common (equivalent) target and source. For example, for two morphisms, we have
$$
\begin{tikzpicture}[scale=2]
\begin{scope}[xshift=0cm]
\draw[draw =bluebg, ultra thick] (0,0) -- (0.3, 0);
\draw[draw =redbg, ultra thick] (0.3, 0) -- (1,0);
\draw (0.3, 0) node[dot] {};
\path (0,0) -- node[anchor=north] {\scriptsize $\F_0$} (1,0);
\draw (1.2, 0) node (A) {};
\end{scope}

\begin{scope}[xshift=2.5cm]
\draw[draw =redbg, ultra thick] (0, 0) -- (0.7,0);
\draw[draw =greenbg, ultra thick] (0.7, 0) -- (1,0);
\draw (0.7, 0) node[dot] {};
\path (0,0) -- node[anchor=north] {\scriptsize $\F_1$} (1,0);
\draw (-0.2, 0) node (B) {};
\end{scope}

\draw[<->] (A) --node[anchor=south] {\scriptsize $t(\F_0)\cong s(\F_1)$} (B);

\draw (4, 0) node {$\rightsquigarrow$};

\begin{scope}[xshift=4.5cm]
\draw[draw =bluebg, ultra thick] (0,0) -- (0.3, 0);
\draw[draw =redbg, ultra thick] (0.3, 0) -- (1,0);
\draw[draw =greenbg, ultra thick] (1.3, 0) -- (1,0);

\draw (0.3, 0) node[dot] {};
\draw (1, 0) node[dot] {};

\draw [thick, decoration={brace, mirror, raise=0.2cm}, decorate] (0, 0) --  node[anchor=north, yshift= -0.25cm] {\scriptsize $\F_0$} (1, 0);

\draw [thick, decoration={brace, raise=0.2cm}, decorate] (0.3, 0) --  node[anchor=south, yshift= 0.25cm] {\scriptsize $\F_1$} (1.3, 0);
\end{scope}

\end{tikzpicture}
$$
Second, we push forward along a ``collapse-and-rescale map'' $\varrho$, which collapses everything between the two points, and then rescales back to get $(0,1)$.

$$\begin{tikzpicture}
\draw[draw =bluebg, ultra thick] (0,0) -- (0.3, 0);
\draw[draw =redbg, ultra thick] (0.3, 0) -- (1,0);
\draw[draw =greenbg, ultra thick] (1.3, 0) -- (1,0);

\draw (0.3, 0) node[dot] {};
\draw (1, 0) node[dot] {};

\draw[draw =bluebg, ultra thick] (0.35,-1) -- (0.65, -1);
\draw[draw =greenbg, ultra thick] (0.95,-1) -- (0.65, -1);
\path (0.95,-1) node (B) {} -- (0.35,-1) node (A) {};
\draw (0.65, -1) node[dot] {};

\draw[draw =bluebg, ultra thick] (0.15, -2) -- (0.65, -2);
\draw[draw =greenbg, ultra thick] (0.65, -2) -- (1.15, -2);
\draw (0.65, -2) node[dot] {};

\draw[densely dotted] (0, 0) -- (0.35, -1)
(0.3, 0) -- (0.65, -1) -- (1, 0)
(1.3, 0) -- (0.95,-1)

(0.35,-1) -- (0.15, -2)
(0.95, -1) -- (1.15, -2);

\draw[->] (2, 0) -- node[anchor=west] {$\varrho \times \id$} (2, -2);
\end{tikzpicture}
$$

\paragraph{2-morphisms}

Note that objects and 1-morphisms were (essentially) given by certain factorization algebras. Factorization algebras valued in an $(\infty,1)$-category $\S$ form an $(\infty,1)$-category $\mathrm{Fact}_{(0,1)}(\S)$ \cite{CostelloGwilliam}, and we use this extra category layer to obtain the 2-morphisms in $\Alg_1(\S)$.

Unraveling this, a 2-morphism from 
\begin{center}
\begin{tikzpicture}[scale=2]
\begin{scope}[xshift=0cm]
\draw (-0.5, 0) node{$\F$ on };
\draw[draw =bluebg, very thick] (0,0) -- (0.3, 0) node[anchor=south] {$s$};
\draw[draw =redbg, very thick] (0.3, 0) -- (1,0) ;
\draw (0.3, 0) node[dot] {};
\end{scope}
\draw (1.5, 0) node{to $\G$ on};
\begin{scope}[xshift=2cm]
\draw[draw =bluebg, very thick] (0,0) -- (0.6, 0) node[anchor=south] {$t$};
\draw[draw =redbg, very thick] (0.6, 0) -- (1,0) ;
\draw (0.6, 0) node[dot] {};

\end{scope}
\end{tikzpicture}
\end{center}
consists of a morphism of factorization algebra $\ell_*\F \to \G$, where $\ell:(0,1) \to (0,1)$ is the unique piece-wise linear map preserving the endpoints and sending $s$ to $t$.

\paragraph{The symmetric monoidal structure}
If $\S$ has a symmetric monoidal structure, then it induces one on $\mathrm{Fact}_{(0,1)}(\S)$:
given $\Rr$ and $\Ss$, their tensor product  $\Rr \otimes \Ss$ is defined by the formula
\[
(\Rr \otimes \Ss )(U) = \Rr(U)\otimes \Ss(U)
\]
for each open set~$U$.
This symmetric monoidal structure then extends to a symmetric monoidal structure on $\Alg_1(\S)$. 
On objects, it is given by the symmetric monoidal product of the factorization algebras.
On 1-morphisms, if the stratifications are the same (so the marked point is at the same position), then one can also simply tensor the factorization algebras. 
In that case, the tensor product manifestly produces a constructible factorization algebra as desired.
If the stratifications are not identical, then pick a homeomorphism of the second interval such that the image of the stratification is identical to the first interval's stratification. 
Then push the second factorization algebra forward along this map and tensor with the factorization algebra on the first interval.

\subsubsection{A dictionary to the ``usual'' Morita category}\label{sec one d dictionary}

We now turn to explaining how to interpret, in terms of algebras and bimodules, this 2-category built from factorization algebras.
The dictionary is:
\begin{center}
\begin{tabular}{|c|c|c|}
\hline 
{\bf 2-category} & {\bf standard version} & {\bf factorization version} \\
\hline 
0-morphism & algebra & locally constant \\
\hline
1-morphism & bimodule & constructible for $\{s\} \subset (0,1)$ \\
\hline
2-morphism & bimodule map & map of factorization algebras \\
\hline
\end{tabular}
\end{center}

The justification runs as follows.

Consider first the objects, namely locally constant factorization algebras on an interval $(0,1)$. 
In Theorem 5.4.5.9 of \cite{LurieHA}, Lurie demonstrates that $E_1$ algebras correspond to locally constant factorization algebras in the homotopy-coherent sense.
But one can get intuition for this claim by considering a stricter situation:
suppose a factorization algebra $\F$ is strictly locally constant and that all structure maps are associative on the nose.
It is a direct check to see that $\F$ encodes a strictly associative algebra $A_\F$.
Namely, let $A_\F = \F((0,1))$, the value of $\F$ on the whole interval.
By hypothesis, for any interval, we have an isomorphism $\F((a,b)) \cong \F((0,1))$,
so we are free to view $A_\F$ as the value of any interval.
Hence, for any pair of disjoint intervals $(a,b) \sqcup (c,d) \subset (0,1)$, with $b < c$,
the structure map of the factorization algebra determines a map $A_\F \otimes A_\F \to A_\F$,
and it is independent of the choice of pairs.
This map determines an associative multiplication, as one can see by considering triples of disjoint intervals and how they include into disjoint pairs.

Conversely, given a unital associative algebra $A$, one can construct a locally constant factorization algebra $\F_A$ as follows.
To any interval $(a,b)$, $\F_A$ assigns $A$.
To an inclusion of intervals, $\F_A$ assigns the identity.
For any inclusion of $k$ disjoint intervals into a larger interval, the structure map of $\F_A$ is the $k$-fold multiplication, where one uses the natural ordering on the intervals inherited from the interval~$(0,1)$.

Similar reasoning applies to the 1-morphisms.
Let us start by explaining how a {\em pointed} $(A,B)$-bimodule $m_0: \unit \to M$ determines a constructible factorization algebra $\F_M$ on the stratification $\{s\} \subset (0,1)$.
On the subinterval $(0,s)$, $\F_M$ agrees with $\F_A$ we just described.
So to any interval $(a,b) \subset (0,s)$, $\F_M$ assigns $A$, and to inclusions of disjoint intervals into such an $(a,b)$, it uses multiplication in $A$.
Similarly, on the subinterval $(s,1)$, $\F_M$ assigns the factorization algebra $\F_B$ associated to the algebra $B$.
However, for any interval $(a,b)$ with $a < s < b$, $\F_M$ assigns $M$.
Here is where the pointing becomes important.
Consider the inclusion of intervals
\[
(a,b) \subset (c,d)
\]
with $c < a < b < s < d$.
The associated structure map for $\F_m$ must be given by a map $A \to M$
that is compatible with pointings $\unit \to A$ and $\unit \to M$,
which are the structure maps arising from the inclusion of the empty set.
This constraint fully determines the structure map:
the pointing $m_0: \unit \to M$ in the ambient category $\S$ determines a map of left $A$-modules $m_0^A: A \to M$ by the free-forget adjunction.
The same reasoning shows that the canonical map $m_0^B: B \to M$ of right  $B$-modules
determines the structure map for an inclusion of intervals
\[
(a,b) \subset (c,d)
\]
with $c < s < a < b < d$.
The final important case to consider is an inclusion of disjoint intervals
\[
(a,b) \sqcup (a',b') \sqcup (a'',b'') \subset (c, d)
\]
with 
\[
c < a < b < a' < s < b' < a'' < b'' < d.
\]
The corresponding structure map
\[
A \otimes M \otimes B \to M
\]
is determined by the bimodule structure of $M$;
we simply use the action of $A$ from the left and action of $B$ from the right.
More generally, given a collection of disjoint intervals including into an interval $(a,b)$ with $a < s < b$,
one uses the pointed bimodule structure to determine the structure maps.

Conversely, if a factorization algebra is strictly constructible with respect to the stratification $\{s\} \subset (0,1)$ and its structure maps are strictly associative,
then one can read off a pointed bimodule.
The factorization algebra on the subinterval $(0,s)$ corresponds to some algebra $A$,
and on the subinterval $(s,1)$ to some algebra $B$.
The value on any interval containing $s$ determines an $(A,B)$-bimodule $M$ by the structure maps,
reversing the construction we gave in the preceding paragraph.
With more care, one can deduce a homotopy-coherent version,
which is the bimodule analogue of Lurie's result for the locally constant case.

\subsection{Constructible factorization algebras for arbitrary dimension}

We will only need a restricted class of $n$-dimensional factorization algebras: constructible factorization algebras on the $n$-dimensional cube $(0,1)^n$ with stratifications of a very simple type. For many more details and more complicated stratified spaces, see  \cite{AFTlocal, AFTfact,Ginot}. The latter also serves as an expository introduction. 

For us the most important examples of stratifications will be ``affine flags''. The prototype thereof is the ``standard affine flag'' given by
\begin{align*}
\{(\tfrac12,\ldots, \tfrac12)\}&\times (0,1)^{n-k} \subset \{(\tfrac12,\ldots, \tfrac12)\}\times (0,1)^{n-k+1} \subset \cdots \\ 
&\cdots \subset \{ \tfrac12\}\times (0,1)^{n-1} \subset (0,1)^n
\end{align*}
for $0\leq k\leq n$.\footnote{This flag looks like a copy of $\RR^{n-k}$ (for the last $n-k$ coordinates) crossed with the standard flag on $k$-dimensional space, albeit shifted to the point $1/2$.} 
Since this flag will be the local picture of our stratifications, we will call this the {\em standard stratified disk of index $k$}.

\begin{example}
For $n=2$ there are three standard stratified disks and can be illustrated as follows:
\begin{center}
\begin{tikzpicture}[scale=0.3, baseline=(x1.south)]
\fill[color=redbg] (0,0) node (x1) {} circle (1);

\begin{scope}[xshift = 3cm]
\fill[color=redbg] (0,-1) arc (270:90:1);
\fill[color=bluebg] (0,-1) arc (-90:90:1);
\draw [thick] (0,1) -- (0,-1);
\end{scope}

\begin{scope}[xshift = 6cm]
\fill[color=redbg] (0,-1) arc (270:90:1);
\fill[color=bluebg] (0,-1) arc (-90:90:1);
\draw [thick] (0,1) --node[dot] {} (0,-1);
\end{scope}
\end{tikzpicture}
\end{center}
The colors just indicate the number of connected components of the 2-dimensional stratum.~\hfill$\mpim$
\end{example}

We are interested in stratifications
$$\emptyset=X_{-1}\subset X_0\subset X_1 \subset \cdots \subset X_n = (0,1)^n$$
such that for every point $x\in X$, there is an open neighborhood of $x$ that is diffeomorphic,  as stratified spaces, to a standard stratified disk of some index. This means that the diffeomorphism restricts to diffeomorphisms of the subspaces. In particular, every $X_i$ is an $i$-dimensional smooth closed submanifold of $(0,1)^n$. For simplicity, we only consider stratifications with strata with finitely many connected components.

\begin{example}
For $n=1$ this definition specifies to the ones in Section~\ref{sec: fact Alg_1}.

For $n=2$ this definition amounts to a 1-manifold $X_1$ embedded in the unit square and marked with certain distinguished points~$X_0$.
Consider the following example.
$$\begin{tikzpicture}
\fill[yellow] (0,0) -- (1,0) arc (0:90:1);

\fill[redbg] (1,0) arc (0:90:1) -- (0,1.5) .. controls (1,0.7) and (1.4, 3) .. (1.4, 1) ..controls (1.4, 0) and (2.6, 0.7) .. (3, 1) -- (3,0);

\fill[bluebg] (0,1.5) .. controls (1,0.7) and (1.4, 3) .. (1.4, 1) ..controls (1.4, 0) and (2.6, 0.7) .. (3, 1) -- (3,3) -- (0,3);

\filldraw[fill=greenbg, thick] (2.1,2.3) ellipse (0.7 and 0.5);

\draw[thick] (0,1.5) .. controls (1,0.7) and (1.4, 3) .. (1.4, 1) ..controls (1.4, 0) and (2.6, 0.7) .. (3, 1);
\draw[thick] (1,0) arc (0:90:1);

\draw (1.4, 1) node[dot] {};

\path (1,0) arc (0:25:1) node[dot] {} arc (25: 45:1) node[dot] {} arc (45: 70:1) node[dot] {};
\path (2.1,2.3) ellipse (0.7 and 0.5);
\path (2.8, 2.3) node[dot] {};
\path (1.4, 2.3) node[dot] {};
\path (2.1, 2.8) node[dot] {};
\path (2.1, 1.8) node[dot] {};
\end{tikzpicture}
$$
Note how the 1-manifolds have no boundary.~\hfill$\mpim$
\end{example}

In the rest of this article, by ``stratified space'' and ``stratification'' we always assume we are in the just described situation.
\begin{defn} Let $(0,1)^n$ be equipped with a stratification.
A {\em stratified disk $D$ of index $k$} is an open disk in $(0,1)^n$ that is diffeomorphic as stratified spaces to a standard stratified disk of index $k$.
\end{defn}

In other words, $D\cap(X_k\setminus X_{k-1})$ is connected and non-empty and~$D\subset X\setminus X_{k-1}$.

Note that locally, every such stratified space is diffeomorphic to an open disk $D$ such that for some $\alpha$, we have that $D\cap(X_\alpha\setminus X_{\alpha-1})$ is connected and non-empty and $D\subset X\setminus X_{\alpha-1}$. We call such a $D$ a {\em basic stratified disk of index $\alpha$}.

\begin{defn}
Let $\emptyset=X_{-1}\subset X_0\subset X_1\subset \cdots \subset X_n= (0,1)^n$ be a stratification of $(0,1)^n$. 
A factorization algebra $\F$ on the stratified space $X$ is called \emph{constructible} (or {\em locally constant with respect to the stratification}) 
if for any inclusion of two disks $D_1\hookrightarrow D_2$ that are each diffeomorphic to the same basic stratified disk, the structure map
$$\F(D_1) \xrightarrow{\simeq} \F(D_2)$$
is an equivalence. 
If the stratification is the empty stratification, then a constructible factorization algebra is called {\em locally constant}.
\end{defn}

\begin{example}
For $n=2$, 
a constructible factorization algebra is essentially (up to equivalence) determined by a finite amount of data.

For example, consider the stratification on the left.
We draw the two connected 1-dimensional components of $X_1\setminus X_0$ with a different pattern, to distinguish them  clearly.
There are five types of disks, as indicated in the middle (two different unicolored disks, two different bicolored disks, and one disk with a dot). Two examples of inclusions inducing the structure maps between them are indicated on the right:
\begin{center}
\begin{tikzpicture}[scale=2.5]
\fill[fill =bluebg] (0,0) -- (0.5, 0) -- (0.5, 1) -- (0,1) -- cycle;
\fill[fill =redbg] (0.5, 0) -- (0.5, 1) -- (1,1) -- (1,0) -- cycle;
\path (0,0) -- (1,0);
\draw[thick, dash pattern=on 2pt off 1pt on 4pt off 1pt] (0.5, 0) -- (0.5, 0.5);
\draw[thick] (0.5, 0.5) -- (0.5, 1);
\draw (0.5, 0.5) node[dot] {};

\begin{scope}[xshift = 1.5 cm]
\fill[fill =bluebg] (0,0) -- (0.5, 0) -- (0.5, 1) -- (0,1) -- cycle;
\fill[fill =redbg] (0.5, 0) -- (0.5, 1) -- (1,1) -- (1,0) -- cycle;
\path (0,0) -- (1,0);
\draw[thick, dash pattern=on 2pt off 1pt on 4pt off 1pt] (0.5, 0) -- (0.5, 0.5);
\draw[thick] (0.5, 0.5) -- (0.5, 1);
\draw (0.5, 0.5) node[dot] {};

\fill[fill=bluecirc] (0.25, 0.5) circle (0.1);
\draw (0.5, 0.5) circle (0.1);
\fill[fill=redcirc] (0.75, 0.5) circle (0.1);
\draw (0.5, 0.75) circle (0.1);
\draw (0.5, 0.25) circle (0.1);
\end{scope}

\begin{scope}[xshift = 3 cm]
\fill[fill =bluebg] (0,0) -- (0.5, 0) -- (0.5, 1) -- (0,1) -- cycle;
\fill[fill =redbg] (0.5, 0) -- (0.5, 1) -- (1,1) -- (1,0) -- cycle;
\path (0,0) -- (1,0);
\draw[thick, dash pattern=on 2pt off 1pt on 4pt off 1pt] (0.5, 0) -- (0.5, 0.5);
\draw[thick] (0.5, 0.5) -- (0.5, 1);
\draw (0.5, 0.5) node[dot] {};

\fill[fill=bluecirc] (0.25, 0.75) circle (0.1);
\draw (0.5, 0.75) circle (0.1);
\draw (0.5, 0.57) arc (-90:90:0.18) -- (0.25, 0.93) arc (90:270:0.18) -- cycle;

\draw (0.5, 0.25) circle (0.185);
\draw (0.5, 0.16) circle (0.08);
\draw (0.5, 0.34) circle (0.08);
\end{scope}
\end{tikzpicture}
\end{center}
Note that including the empty set $\emptyset$ (whose value is always $\unit$) into any of the basic disks also requires a structure map, which is part of the data.

In general, a constructible factorization algebra on a stratified square is essentially determined by the value of
$$\# \pi_0(X_0) + \# \pi_0(X_1\setminus X_0) + \# \pi_0 (X_2\setminus X_1) $$
basic disks and the structure maps for them.~\hfill$\mpim$
\end{example}

We will repeatedly use the following operation to produce examples of factorization algebras.

\begin{definition}
Let $p:X\to Y$ be a continuous map and $\F$ a factorization algebra on $X$. 
The {\em pushforward factorization algebra} $p_*\F$ on $Y$ is given by the formula
$$p_*\F(U) = \F(p^{-1}(U)),$$
which fully determines the structure maps in terms of those of~$\F$.
\end{definition}

\begin{lemma}[\cite{Ginot}, Corollary 6]\label{lemma constructible pushforward}
Let $p:X\to Y$ be an ``adequately stratified'' map of stratified spaces. If $\F$ is a constructible factorization algebra on $X$, then $p_*\F$ is constructible on $Y$.
\end{lemma}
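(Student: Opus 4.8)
The plan is to verify constructibility of $p_*\F$ straight from the definition: I must show that for every inclusion $U_1 \hookrightarrow U_2$ of opens in $Y$ that are each diffeomorphic, as stratified spaces, to the same basic stratified disk, the structure map $p_*\F(U_1)\to p_*\F(U_2)$ is an equivalence. Unwinding the definition of the pushforward, this map is exactly
$\F(p^{-1}(U_1)) \to \F(p^{-1}(U_2))$,
so the whole problem is transported into the geometry of the preimages $p^{-1}(U_i)\subset X$ together with the constructibility of $\F$ there. The first step is thus purely formal: reduce the global assertion to this family of generating inclusions.

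Next I would make the hypothesis that $p$ is ``adequately stratified'' do its work. The feature one wants is that $p$ restricts over each stratum of $Y$ to a (conically smooth) stratified fiber bundle, and that the stratification of $X$ is compatible with the one pulled back along $p$. Concretely, this should guarantee that for a basic stratified disk $U\subset Y$ the preimage $p^{-1}(U)$ is, up to stratified diffeomorphism, a controlled local model $U\times F$ for a fixed stratified fiber $F$ (a trivialization of the bundle over the contractible $U$). The payoff of isolating this statement is that it converts the inclusion $U_1\hookrightarrow U_2$ into an inclusion $p^{-1}(U_1)\hookrightarrow p^{-1}(U_2)$ realized as a ``smaller copy'' of the same local model inside $X$.

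Finally I would evaluate $\F$ on the two preimages via Weiss covers. Since $\F$ is determined by its values on any Weiss cover of $p^{-1}(U_2)$ by basic stratified disks of $X$, I would choose compatible Weiss covers of $p^{-1}(U_1)$ and $p^{-1}(U_2)$ adapted to the product model $U_i\times F$, and compare the colimits computing $\F(p^{-1}(U_1))$ and $\F(p^{-1}(U_2))$. Here the constructibility of $\F$ enters essentially: every generating inclusion of equal-index basic disks of $X$ is sent to an equivalence, so the inclusion in the $U$-direction is absorbed and the induced map of Weiss colimits is an equivalence. This yields the desired equivalence of structure maps and hence the constructibility of $p_*\F$.

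The main obstacle, and the genuine content behind the citation to \cite{Ginot}, lies not in the factorization-algebra bookkeeping but in the stratified differential topology: one must pin down the adequacy hypothesis precisely enough that $p^{-1}$ of a basic disk is recognizable as a constant (trivialized) local model in $X$, and that the two preimages assemble into compatibly cofinal Weiss covers. Once that local-triviality input over basic disks is secured, the equivalence of structure maps follows formally from the local constancy of $\F$.
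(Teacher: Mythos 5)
The paper does not actually prove this lemma: it is imported verbatim from Ginot (Corollary 6), so there is no internal argument to compare yours against. Judged on its own terms, your outer architecture is the right one and matches how the cited proof is organized: reduce constructibility of $p_*\F$ to the structure maps $\F(p^{-1}(U_1))\to\F(p^{-1}(U_2))$ for nested same-type basic disks in $Y$, control the preimages using the adequacy hypothesis, and finish by computing $\F$ on those preimages via Weiss covers by basics of $X$, where the constructibility of $\F$ forces the comparison map of colimits to be an equivalence.

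The genuine gap is in your middle step, where you gloss ``adequately stratified'' as: $p$ restricts over each stratum to a stratified fiber bundle, trivialized over a basic disk $U$ as $U\times F$ for a \emph{fixed} stratified fiber $F$. That reading is strictly stronger than the actual hypothesis and would exclude the paper's principal examples. The collapse-and-rescale maps $\varrho^b_a$ (used for composition and for all the unit/counit constructions) are not locally trivial fibrations: the fiber over the collapse point is a closed interval while every nearby fiber is a single point, so no product model $U\times F$ exists for a basic disk containing that point. Likewise, for a refinement of stratifications the preimage of a basic disk of $Y$ need not be a basic disk of $X$ of a single type. The correct notion is weaker and essentially tailored to the conclusion: one asks that for nested same-type basics $U_1\subset U_2$ of $Y$, the preimages admit compatibly cofinal (Weiss) families of same-type basics of $X$ --- in the collapse example, $p^{-1}(U_i)$ is again a single stratified disk whose type does not depend on $i$, even though the fibers jump. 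With the hypothesis stated at that level of generality your third step goes through unchanged; as written, your second step proves the lemma only for the ``locally trivial stratified fibration'' item on the paper's list and not for the other three.
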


In this paper we will use only the following examples of adequately stratified maps:
\begin{itemize}
\item (local) diffeomorphisms of stratified spaces,
\item collapse-and-rescale maps from Section~\ref{sec: collapse and rescale},
\item refinements of the stratification, and
\item locally trivial stratified fibrations.
\end{itemize}

\subsection{The factorization model for the higher Morita category~\texorpdfstring{$\Alg_n(\S)$}{Algn(S)}}
\label{sec: fact Alg_n}

In this section we briefly and informally review the construction of the Morita $(\infty,n)$-category of $E_n$ algebras using factorization algebras from \cite{CSThesis, CalaqueScheimbauer}. In fact, it is a complete $n$-fold Segal object in $\infty$-categories. The main idea is to use the equivalence of $\infty$-categories of $E_n$ algebras and of locally constant factorization algebras on $(0,1)^n$ of Theorem 5.4.5.9 of \cite{LurieHA}; we {\em define} the objects of our higher category as the latter.
The interested reader can find full details on the construction in the above mentioned reference.  

One compelling aspect of this approach is that it makes visible and explicit the {\em geometric} aspects of $n$-dimensional algebra, as the many pictures, which illustrate the case $n=2$, will show.

Let us again use the convention that ``0-morphism'' means ``object'' of the higher category.
For $0\leq k\leq n$, the $k$-morphisms are pairs consisting of
\begin{enumerate}
\item auxiliary data of some subintervals of $(0,1)$, which determine a special type of stratification of $(0,1)^n$, an ``affine flag'' consisting of a codimension $k$ subspace sitting inside a codimension $k-1$ subspace and so on:
\begin{align*}
\{(a^1,\ldots, a^k)\}\times (0,1)^{n-k} &\subset \{(a^1,\ldots, a^{k-1})\}\times (0,1)^{n-k+1} \subset \cdots \\ 
&\cdots \subset \{a^1\}\times (0,1)^{n-1} \subset (0,1)^n.
\end{align*}
\item a factorization algebra $\F$ on $(0,1)^n$ that is constructible with respect to the stratification.
\end{enumerate}

\begin{remark}
To give a flag as above, we must specify a point $a^i \in (0,1)$ for each $1 \leq i \leq k$ that says how to split each of the first $k$ coordinates into two pieces. In particular, an object ($k=0$) is just a locally constant factorization algebra on $(0,1)^n$.
The auxiliary data in the full definition specifies that point in a more convenient form for constructing a complete $n$-fold Segal space.
Moreover, we issue a small caveat to this gloss:
a $k$-morphism equivalent to an identity morphism need not have a nontrivial stratification.
It does, however, possess the auxiliary data.
\end{remark}

\begin{remark}
In this text we choose the top-dimensional stratum to always be $(0,1)^n$.
Instead, we could allow any product of intervals
$$(\alpha_1, \beta_1)\times \cdots \times (\alpha_n, \beta_n)\subset \R^n,$$
and identify (i.e.~add a path between) any two objects which are related by a rescaling given by a homeomophism.
Then we do not need as many linear rescalings in the definition of source, target, and composition below.
When drawing pictures, it will sometimes be convenient to draw rectangles instead of squares.
\end{remark}

\begin{example}\label{ex morphism pics}
We illustrate the allowed types of stratifications in the case~$n=2$:
$$
\begin{tikzpicture}[scale=2]
\fill[fill =bluebg] (0,0) -- node[anchor=north] {\scriptsize object} (1,0) -- (1,1) -- (0,1) -- cycle;

\begin{scope}[xshift=2cm]
\fill[fill =bluebg] (0,0) -- (0.3, 0) -- (0.3, 1) -- (0,1) -- cycle;
\fill[fill =redbg] (0.3, 0) -- (0.3, 1) -- (1,1) -- (1,0) -- cycle;
\path (0,0) -- node[anchor=north] {\scriptsize 1-morphism} (1,0);
\draw[thick] (0.3, 0) -- (0.3, 1);
\end{scope}

\begin{scope}[xshift=4cm]
\fill[fill =bluebg] (0,0) -- (0.6, 0) -- (0.6, 1) -- (0,1) -- cycle;
\fill[fill =redbg] (0.6, 0) -- (0.6, 1) -- (1,1) -- (1,0) -- cycle;
\path (0,0) -- node[anchor=north] {\scriptsize 2-morphism} (1,0);
\draw[thick] (0.6, 0) -- (0.6, 1);
\draw (0.6, 0.6) node[dot] {};
\end{scope}
\end{tikzpicture}
$$
An objects has the empty stratification, a (non-identity) 1-morphism has a stratification given by a vertical line, and a (non-identity) 2-morphism has a stratification consisting of a vertical line together with a point in its interior. In the illustration, we use a different color for each connected component of its complement.~\hfill$\mpim$
\end{example}

\paragraph{Source and target}
The source and target of a $k$-morphism is given by restricting the factorization algebra $\F$ to one of the two components of the complement of the hyperplane $\{x_k=a^k\}$ in $(0,1)^n$.  The source is given by restricting $\F$ to $\{x_k<a^k\}$ and the target is given by restricting $\F$ to $\{x_k>a^k\}$ in $(0,1)^n$.

Let us make this explicit and illustrate it for $n=2$, hopefully making the procedure clear in general.
Our convention is to read 2-dimensional pictures from left to right and from bottom to top. By this we mean the following.

Consider a 1-morphism as pictured in Example \ref{ex morphism pics}, where the line is given by $\{x_1=a^1\}$ for some $0<a^1<1$. Its source is the restriction of the factorization algebra $\F$ on the left of the stratification, i.e.~the blue part, rescaled to $(0,1)^2$ by pushing forward along $(\varrho^{a^1}_1\times \id)^{-1}$. Similarly, the target is the restriction to the red part rescaled by pushing forward along $(\varrho^0_{a^1}\times \id)^{-1}$.
$$\begin{tikzpicture}
\fill[fill =bluebg] (0,0) -- (0.3, 0) -- (0.3, 1) -- (0,1) -- cycle;
\fill[fill =redbg] (0.3, 0) -- (0.3, 1) -- (1,1) -- (1,0) -- cycle;
\draw[thick] (0.3, 0) -- (0.3, 1);
\draw[|->] (1.3, 0.5) -- (1.8, 0.5);
\draw[|->] (-0.3, 0.5) -- (-0.8, 0.5);

\begin{scope}[xshift=1.8cm]
\fill[fill =redbg] (0.3, 0) -- (0.3, 1) -- (1,1) -- (1,0) -- cycle;
\draw[densely dotted] (0.3, 0) -- (0.3, -1) (1,0) -- (1.3,-1);
\fill[fill =redbg] (0.3, -1) -- (0.3, -2) -- (1.3, -2) -- (1.3, -1) -- cycle;
\draw[|->] (0.65, -0.2) -- node{\tiny $(\varrho^0_{a^1}\times \id)^{-1}$} (0.8, -0.8);
\draw (0.8,-2) node[anchor=north] {\scriptsize target};
\end{scope}

\begin{scope}[xshift=-1.4cm]
\fill[fill =bluebg] (0,0) -- (0.3, 0) -- (0.3, 1) -- (0,1) -- cycle;
\draw[densely dotted] (0.3, 0) -- (0.3, -1) (0,0) -- (-0.7,-1);
\fill[fill =bluebg] (0.3,-2) -- (-0.7, -2) -- (-0.7, -1) -- (0.3, -1) -- cycle;
\draw[|->] (0.15, -0.2) -- node{\tiny $(\varrho^{a^1}_1\times \id)^{-1}$} (-0.2, -0.8);
\draw (-0.2,-2) node[anchor=north] {\scriptsize source};
\end{scope}
\end{tikzpicture}$$

\begin{remark}\label{rem visualize morphism}
We use the following notational conventations throughout the paper. We denote by $\Rr$ the locally constant factorization algebra of the source and by $\Ss$ the locally constant factorization algebra of the target and suggestively draw them into the picture. Moreover, let $\Aa$ denote the values of $\F$ at disks which intersect the line. We can also think of $\Aa$ as denoting the locally constant factorization algebra on $(0,1)$ obtained by pushing forward along the horizontal projection~$(x,y) \mapsto y$.
Hence we visualize a 1-morphism determined by $\F$ by
$$
\includestandalone{pics_tex/pic-morphism_small}\, .
$$
\end{remark}

For a 2-morphism, in addition to a vertical line $\{x_1=a^1\}$, there is a point lying at the intersection with the horizontal line $\{x_2=a^2\}$ for some $0<a^2<1$. The source (or target) is given by restricting $\F$ to the part below (or above, respectively) the horizontal line passing through the point, and rescaling.

$$\begin{tikzpicture}
\fill[fill =bluebg] (0,0) -- (0.6, 0) -- (0.6, 1) -- (0,1) -- cycle;
\fill[fill =redbg] (0.6, 0) -- (0.6, 1) -- (1,1) -- (1,0) -- cycle;
\draw[thick] (0.6, 0) -- (0.6, 1);
\draw (0.6, 0.6) node[dot] {};

\draw[|->] (1.2, 0.8) -- (1.8, 1.2);
\draw[|->] (1.2, 0.2) -- (1.8, -0.3);

\begin{scope}[shift={(2, 0.4)}]
\fill[fill =bluebg] (0,0.6) -- (0.6, 0.6) -- (0.6, 1) -- (0,1) -- cycle;
\fill[fill =redbg] (0.6, 0.6) -- (0.6, 1) -- (1,1) -- (1,0.6) -- cycle;
\draw[thick] (0.6, 0.6) -- (0.6, 1);

\draw[densely dotted] (1, 0.6) -- (2, 0.6) (1, 1) -- (2,1.6);
\end{scope}

\begin{scope}[shift={(4,1)}]
\fill[fill =bluebg] (0,0) -- (0.3, 0) -- (0.3, 1) -- (0,1) -- cycle;
\fill[fill =redbg] (0.3, 0) -- (0.3, 1) -- (1,1) -- node[anchor=west]{\scriptsize  source} (1,0) -- cycle;
\draw[thick] (0.3, 0) -- (0.3, 1);
\end{scope}

\begin{scope}[shift={(2, -.6)}]
\fill[fill =bluebg] (0,0) -- (0.6, 0) -- (0.6, 0.6) -- (0,0.6) -- cycle;
\fill[fill =redbg] (0.6, 0) -- (0.6, 0.6) -- (1,0.6) -- (1,0) -- cycle;
\draw[thick] (0.6, 0) -- (0.6, 0.6);

\draw[densely dotted] (1, 0.6) -- (2, 0.6) (1, 0) -- (2,-0.4);
\end{scope}

\begin{scope}[shift={(4,-1)}]
\fill[fill =bluebg] (0,0) -- (0.3, 0) -- (0.3, 1) -- (0,1) -- cycle;
\fill[fill =redbg] (0.3, 0) -- (0.3, 1) -- (1,1) -- node[anchor=west]{\scriptsize  target} (1,0) -- cycle;
\draw[thick] (0.3, 0) -- (0.3, 1);
\end{scope}

\end{tikzpicture}
$$

\begin{remark}\label{rem visualize 2-morphism}
For 2-morphisms we use the following notational conventations. Again, we denote by $\Rr$ and $\Ss$ the locally constant factorization algebras of the source and target objects, respectively. Moreover, let $\Aa$ denote the values of $\F$ at disks which intersect the line below the point and $\Bb$ the values of $\F$ at disks above the point. Finally, values of $\F$ at disks containing the point are denoted by $M$.
Hence we visualize a 2-morphism determined by $\F$ by
$$
\includestandalone{pics_tex/pic-2-morphism_small} \,.
$$
\end{remark}

\paragraph{Composition}
Morphisms are composed in two steps:

First we glue together factorization algebras over the common (equivalent\footnote{We ``compose'' morphisms when there exists an equivalence between target and source, so that composition is only well-defined up to homotopy. This is why this results in a 2-fold Segal space rather than an enriched 2-category on the nose.}) target and source. For example, when $n=2$, for two 1-morphisms, we have
$$
\begin{tikzpicture}[scale=1]
\begin{scope}[xshift=0cm]
\fill[fill =bluebg] (0,0) -- (0.3, 0) -- (0.3, 1) -- (0,1) -- cycle;
\fill[fill =redbg] (0.3, 0) -- (0.3, 1) -- (1,1) -- (1,0) -- cycle;
\path (0,0) -- node[anchor=north] {\scriptsize $\F_0$} (1,0);
\draw[thick] (0.3, 0) -- (0.3, 1);
\draw (0.7, 0.5) node (A) {};
\end{scope}

\begin{scope}[xshift=3cm]
\fill[fill =redbg] (0, 0) -- (0, 1) -- (0.7,1) -- (0.7,0) -- cycle;
\fill[fill =greenbg] (0.7, 0) -- (0.7, 1) -- (1,1) -- (1,0) -- cycle;
\path (0,0) -- node[anchor=north] {\scriptsize $\F_1$} (1,0);
\draw[thick] (0.7, 0) -- (0.7, 1);
\draw (0.3, 0.5) node (B) {};
\end{scope}

\draw[<->] (A) --node[anchor=south] {\scriptsize $t(\F_0)\simeq s(\F_1)$} (B);

\draw (5, 0.5) node {$\rightsquigarrow$};

\begin{scope}[xshift=6cm]
\fill[fill =bluebg] (0,0) -- (0.3, 0) -- (0.3, 1) -- (0,1) -- cycle;
\fill[fill =redbg] (0.3, 0) -- (0.3, 1) -- (1,1) -- (1,0) -- cycle;
\fill[fill =greenbg] (1.3, 0) -- (1.3, 1) -- (1,1) -- (1,0) -- cycle;

\draw[thick] (0.3, 0) -- (0.3, 1)
(1, 0) -- (1, 1);

\draw [thick, decoration={brace, mirror, raise=0.1cm}, decorate] (0, 0) --  node[anchor=north, yshift= -0.15cm] {\scriptsize $\F_0$} (1, 0);

\draw [thick, decoration={brace, raise=0.1cm}, decorate] (0.3, 1) --  node[anchor=south, yshift= 0.15cm] {\scriptsize $\F_1$} (1.3, 1);
\end{scope}

\end{tikzpicture}
$$
Second, we push forward along a piecewise-linear ``collapse-and-rescale map'', which collapses everything between the two hyperplanes, and then rescales back to get $(0,1)^n$. More explicitly, for $n=2$, it is a product of maps $\varrho\times id$ for the composition of 1-morphisms (resp.~$id \times \varrho$ for the vertical composition of 2-morphisms). Here $\varrho$ is similar to the ones defined in Section~\ref{sec: collapse and rescale}.
$$\begin{tikzpicture}
\fill[fill =bluebg] (0,0) -- (0.3, 0) -- (0.3, 1) -- (0,1) -- cycle;
\fill[fill =redbg] (0.3, 0) -- (0.3, 1) -- (1,1) -- (1,0) -- cycle;
\fill[fill =greenbg] (1.3, 0) -- (1.3, 1) -- (1,1) -- (1,0) -- cycle;
\draw[thick] (0.3, 0) -- (0.3, 1)
(1, 0) -- (1, 1);

\fill[fill =bluebg] (0.35,-1) -- (0.35, -2) -- (0.65, -2) -- (0.65, -1) -- cycle;
\fill[fill =greenbg] (0.95,-1) -- (0.95,-2) -- (0.65, -2) -- (0.65, -1) -- cycle;
\path (0.35,-2) -- (0.95,-2) -- (0.95,-1) node (B) {} -- (0.35,-1) node (A) {} -- cycle;
\draw[thick] (0.65, -2) -- (0.65, -1) node (C) {};

\fill[fill =bluebg] (0.15, -4) -- (0.65, -4) -- (0.65, -3) -- (0.15,-3) -- cycle;
\fill[fill =greenbg] (0.65, -3) -- (0.65, -4) -- (1.15, -4) -- (1.15, -3) -- cycle;
\draw[thick] (0.65, -4) -- (0.65, -3);

\draw[densely dotted] (0, 0) -- (0.35, -1)
(0.3, 0) -- (0.65, -1) -- (1, 0)
(1.3, 0) -- (0.95,-1)

(0.35,-2) -- (0.15, -3)
(0.95, -2) -- (1.15, -3);

\draw[->] (2, 0.5) -- node[anchor=west] {$\varrho \times \id$} (2, -3.5);
\end{tikzpicture}
$$
We will use the following shorthand notation later on: following our convention from Remark \ref{rem visualize morphism}, we denote the source and target of $\F_0$ as $\Rr$ and $\Ss$ and mark the line by~$\Aa$.
Similary, the source and target of $\F_1$ are denoted by $\Ss$ and $\Tt$ and the line marked by~$\Bb$.
Then we denote the composition suggestively by~$\Aa\circ_{\Ss} \Bb$.

Horizontal composition of two 2-morphisms for $n=2$ requires the same procedure. We add the picture for vertical composition to illustrate. The horizontal dashed lines are not part of the stratification; they just indicate the lines on which the points lie.
$$
\begin{tikzpicture}
\fill[fill =bluebg] (0,0) -- (0.3, 0) -- (0.3, 1.6) -- (0,1.6) -- cycle;
\fill[fill =redbg] (0.3, 0) -- (0.3, 1.6) -- (1,1.6) -- (1,0) -- cycle;
\draw[thick] (0.3, 0) -- (0.3, 1.6);
\draw[densely dashed, thick, red_hor] (0.3, 0.6) -- (1, 0.6)
(0.3, 1) -- (1, 1);
\draw[densely dashed, thick, blue_hor] (0, 0.6) -- (0.3, 0.6)
(0, 1) -- (0.3, 1);
\path (0.3, 0.6) node[dot] {} (0.3, 1) node[dot] {};

\draw[densely dotted] (1, 0.6) -- (2, 0.8) -- (1, 1)
(1,1.6) -- (2, 1.4)
(1,0) -- (2, 0.2);

\begin{scope}[shift={(2, 0.2)}]
\fill[fill =bluebg] (0,0) -- (0.3, 0) -- (0.3, 1.2) -- (0,1.2) -- cycle;
\fill[fill =redbg] (0.3, 0) -- (0.3, 1.2) -- (1,1.2) -- (1,0) -- cycle;
\draw[thick] (0.3, 0) -- (0.3, 1.2);
\path (0.3, 0.6) node[dot] {};
\end{scope}

\draw[densely dotted] (3, 0.2) -- (4, 0.3)
(3, 1.4) -- (4, 1.3);

\begin{scope}[shift={(4, 0.3)}]
\fill[fill =bluebg] (0,0) -- (0.3, 0) -- (0.3, 1) -- (0,1) -- cycle;
\fill[fill =redbg] (0.3, 0) -- (0.3, 1) -- (1,1) -- (1,0) -- cycle;
\draw[thick] (0.3, 0) -- (0.3, 1);
\path (0.3, 0.5) node[dot] {};
\end{scope}
\end{tikzpicture}
$$

\paragraph{Higher morphisms}
So far we have only discussed the objects and $k$-morphisms for $k\leq n$, all of which are described by factorization algebras on certain stratified spaces. For a fixed stratified space, there is an $(\infty,1)$-category of constructible factorization algebras, where morphisms are given by morphisms of factorization algebras. It can be presented as a relative category if $\S$ is presented as a relative category.

Incorporating morphisms of the underlying stratified spaces, one obtains an $n$-fold Segal object in $(\infty,1)$-categories.

Unravelling this, if $\F$ and $\G$ are factorization algebras presenting two $n$-morphisms, first assume (by rescaling) that the stratifications are the same. Then, an $(n+ \nobreak 1)$-morphism from $\F$ to $\G$ is a morphism of the underlying factorization algebras.

Finally, if $\S$ itself is an $(\infty,N)$-category, for $N>0$, then one can use the $k$-morphisms therein to define $(n+k)$-morphisms.

\paragraph{Symmetric monoidal structure}
If $\S$ is symmetric monoidal, then the $(\infty,1)$-categories of factorization algebras are symmetric monoidal, where $(\F\otimes \G)(U)=\F(U)\otimes\G(U)$. This can be used to endow $\Alg_n(\S)$ with a symmetric monoidal structure.

\paragraph{The technical condition for existence}

We briefly recall the full technical requirements from \cite{CSThesis, CalaqueScheimbauer, JFS}.
From an $(\infty,N)$-category, one can extract a diagram of $(\infty,1)$-categories as
$$\Delta^N\ni \vec k \mapsto \S^\Box_{\vec k}= \tau_{(\infty,1)}[\Theta^{\vec k},\S].$$
Here $\Theta^{\vec k}$ is a strict higher category appearing as an object in Joyal's category $\Theta_N$.
Intuitively, for ${\vec k} = (1,\ldots, 1,0,\ldots, 0)$ with $k$ ones, the $(\infty,1)$-category $\tau_{(\infty,1)}[\Theta^{\vec k},\S]$ extracts the $k$-morphisms.

\begin{remark}\label{rem variants (op)lax}
In \cite{JFS} the notation $\S^{
\tikz[baseline=(box.base)]\node[draw,rectangle,inner sep=1pt] (box) {\tiny\rm strong};
}$ is used for the diagram $\S^\Box$.
Moreover, there are two variants of $\S^\Box$, which correspond to allowing for lax and oplax versions of bimodules, but we will not need this subtlety in this article. 
Theorem~\ref{thm existence Morita} holds for all three variants.
\end{remark}

\begin{definition}\label{technical condition}
Let $\S$ be a symmetric monoidal $(\infty,N)$-category. 
The diagram $\S^\Box$ is \emph{\ensuremath\otimes-sifted-cocomplete} if it is an $N$-fold simplicial diagram of $\otimes$-sifted-cocomplete categories and $\otimes$-sifted-cocontinuous functors.
\end{definition}

\begin{theorem}\label{thm existence Morita}
Let $\S$ be a symmetric monoidal $(\infty,N)$-category
such that $\S^\Box$ is \ensuremath\otimes-sifted-cocomplete.
Then:
\begin{enumerate}
\item The $(n+N)$-uple simplicial space $\Alg_n(\S^\Box_{\vec\bullet})_{\vec\bullet}$ satisfies the Segal condition and is complete in each variable separately. Hence it determines an $(\infty,n+N)$-category~$\Alg_n(\S)$.
\item The symmetric monoidal structure on $\S$ determines a symmetric monoidal structure on~$\Alg_n(\S)$.
\end{enumerate}
\end{theorem}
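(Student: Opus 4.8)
The plan is to recall and reassemble the construction of \cite{CSThesis, CalaqueScheimbauer, JFS} in a way that isolates where the hypothesis on $\S^\Box$ is used. First I would fix the combinatorial architecture: the object in question is the $(n+N)$-uple simplicial space whose outer $n$ coordinates record a flag of parallel hyperplanes in $(0,1)^n$ (with multiplicities encoding chains of composable Morita morphisms) and whose inner $N$ coordinates extract the $(\infty,1)$-category $\S^\Box_{\vec k}$ from $\S$ viewed as an $(\infty,N)$-category. A multisimplex is sent to the space of constructible factorization algebras on $(0,1)^n$ for the corresponding stratification, valued in $\S^\Box_{\vec k}$. The simplicial structure maps in the Morita directions are the restrictions, the collapse-and-rescale maps of Section~\ref{sec: collapse and rescale}, and the pushforwards of Lemma~\ref{lemma constructible pushforward}; those in the $\S$-directions are inherited from the $N$-fold simplicial structure of $\S^\Box$. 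The point of this bookkeeping is to reduce everything to checking the Segal and completeness conditions one direction at a time, with the remaining coordinates held fixed.

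For the Segal condition in a Morita direction, the Segal map sends a factorization algebra on a square carrying $m$ parallel hyperplanes to the tuple of its restrictions to the $m$ slabs cut out by consecutive hyperplanes. First I would show this restriction map is an equivalence onto the fiber product $X_1\times_{X_0}\cdots\times_{X_0}X_1$ by establishing the corresponding descent statement: a constructible factorization algebra is determined, up to coherent equivalence, by its values on basic stratified disks together with the matching data over the shared locally constant strata, exactly as sketched for $n=1$ in the dictionary of Section~\ref{sec one d dictionary}. This is precisely where $\otimes$-sifted-cocompleteness of Definition~\ref{technical condition} enters: the value of such an algebra on a disk meeting several hyperplanes is forced, by codescent on that disk, to be an iterated relative tensor product (a bar construction), which is a sifted colimit; the hypothesis guarantees these colimits exist, so the factorization algebras realizing composable chains exist and the collapse-and-rescale structure maps are well defined and computed correctly. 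The Segal condition in the $\S$-directions is inherited directly from that of $\S$ as an $(\infty,N)$-category.

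For completeness, my plan is to produce the space-level structure via Rezk's classification-diagram construction applied to the relative categories of constructible factorization algebras (with morphisms the weak equivalences), invoking the general fact that classification diagrams of relative categories are automatically complete Segal spaces; completeness in each Morita direction then comes for free from the construction, and completeness in the $\S$-directions is inherited from $\S$. One then checks that completeness is preserved under the $n$-fold and $N$-fold assembly, which is formal once each direction is complete. For the symmetric monoidal structure of part (2), I would start from the pointwise tensor product $(\F\otimes\G)(U)=\F(U)\otimes\G(U)$, which makes each category of constructible factorization algebras symmetric monoidal, and promote it to a commutative monoid object, i.e.\ to a $\mathrm{Fin}_*$-indexed diagram of the simplicial spaces of part (1) satisfying the Segal condition for $\mathrm{Fin}_*$. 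The only nonformal compatibility is that $\otimes$ must commute with the sifted colimits used in composition; this is exactly $\otimes$-sifted-cocontinuity, which ensures the relative tensor product of pointwise tensors agrees with the pointwise tensor of relative tensor products, so composition is monoidal and the structure maps assemble coherently.

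The hard part will be the Segal descent in the Morita directions, carried out homotopy-coherently rather than strictly: identifying the space of total constructible factorization algebras with the fiber product of slab-restrictions requires exhibiting restriction and reconstruction as mutually inverse equivalences at the level of spaces, not merely on underlying data. I expect to handle this by settling the local model for a single separating hyperplane first, where the reconstruction colimit is the bar construction computing the relative tensor product, and then inducting on the number of hyperplanes; the remaining work, namely propagating this through all $n+N$ directions and verifying the degeneracies and completeness interact correctly with it, is bookkeeping that I would organize by treating the $\S$-directions uniformly via $\S^\Box$ and the Morita directions via the stratification combinatorics.
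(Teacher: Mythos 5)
You should first be aware that the paper itself does not reprove part (1): it cites \cite[Theorem 8.5\,(1)]{JFS} for the Segal and completeness conditions, and the only new content is part (2), handled in Appendix~\ref{appx: sym mon} (Proposition~\ref{prop symm monoidal structure}) by a three-line reduction: Segality in $[m]\in\mathrm{Fin}_*$ is the symmetric monoidality of $\Alg_n$; Segality and completeness in the Morita directions $\vec k$ is the cited result applied levelwise; and Segality and completeness in the $\S$-directions $\vec l$ follows because $\Alg_n[m](-)_{\vec k}$ preserves fiber products, so it carries the complete $N$-fold Segal object $\S^\Box_{\vec\bullet}$ to one again. Your proposal instead attempts a from-scratch proof, and much of it is a reasonable reconstruction: the identification of the Segal maps with slab restrictions, the role of \ensuremath\otimes-sifted colimits in making the collapse-and-rescale pushforwards (the face maps) well defined via bar constructions, and the pointwise tensor product plus \ensuremath\otimes-sifted-cocontinuity for part (2) all match the actual mechanics. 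For the $\vec l$-directions you say the conditions are ``inherited from $\S$'' without naming the lemma that makes this true (preservation of fiber products by $\Alg_n[m](-)_{\vec k}$); that is a fixable omission.

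The genuine gap is your treatment of completeness in the Morita directions. Rezk's classification-diagram construction yields completeness only in the directions where the $1$-simplices are weak equivalences of factorization algebras, i.e.\ the innermost level and the $\S$-directions. In a Morita direction, a homotopy equivalence in $X_{\bullet,0,\ldots,0}$ is a bimodule that is invertible up to equivalence --- a Morita equivalence --- and completeness demands that the space of these be equivalent, via the degeneracy, to the space of objects. For the naive unpointed Morita Segal space this is simply \emph{false}: $\kk$ and $M_2(\kk)$ are Morita equivalent but not equivalent as algebras, so the degeneracy is not $\pi_0$-surjective onto the homotopy equivalences. Completeness holds in the factorization model only because the $n$-morphisms are pointed, which forces an invertible pointed bimodule to induce equivalences $A\simeq M\simeq B$ --- this is essentially the content of Proposition~\ref{thm pointed adjoint implies invertible} and Lemma~\ref{lem: equiv in S} later in the paper, and it is a substantive theorem, not a formal consequence of the construction. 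Your plan contains no argument for this step, and the phrase ``completeness in each Morita direction then comes for free from the construction'' is where the proof would fail.
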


\begin{definition}
The symmetric monoidal $(\infty,n+N)$-category $\Alg_n(\S)$ is called the {\em factorization higher Morita category}.
\end{definition}

The first half of the statement is proven in \cite[Theorem 8.5 (1)]{JFS}. The proof of the second half is straightforward, given the results in \cite{JFS}. We add the short, but technical argument in Proposition \ref{prop symm monoidal structure} in Appendix~\ref{appx: sym mon}.

\begin{remark}
For $N=1$, the condition on $\S^\Box$ boils down to requiring $\S$ to be \ensuremath\otimes-sifted-cocomplete. The condition of having sifted colimits ensures that the necessary pushforwards (for face maps) exist, and the compatibility with $\otimes$ is to ensure that $\Alg_n(\S)$ has a symmetric monoidal structure.
\end{remark}

\begin{remark}\label{rem two d dictionary}
We can extend our dictionary from Section \ref{sec one d dictionary} to the higher dimensional setting:
\begin{center}\footnotesize
\begin{tabular}{|c|c|c|}
\hline 
{\bf $n$-category} & {\bf standard version} & {\bf factorization version} \\
\hline 
0-morphism & $E_n$ algebra & locally constant \\
\hline
1-morphism & bimodule of $E_{n-1}$ algebras & constructible for $\{a^1\}\times (0,1)^{n-1}$ \\
\hline
2-morphism & 
\begin{tabular}{@{}c@{}}
bimodule of bimodules\\ of $E_{n-2}$ algebras
\end{tabular}
 &
\begin{tabular}{@{}c@{}}
constructible for \\
$\{(a^1,a^2)\}\times (0,1)^{n-2} \subset \{a^1\}\times (0,1)^{n-1}$
\end{tabular}
\\
\hline \vdots & \vdots & \vdots \\
\hline
$n$-morphism & bimodule of \dots bimodules & constructible for full flag\\
\hline
$(n+1)$-morphism & bimodule map & map of factorization algebras \\
\hline
\end{tabular}
\end{center}
For objects, this dictionary is discussed in Section 4.1 of \cite{LurieTFT}; see Definition~4.1.11. 

Let us extend our sketch of the comparison from Section \ref{sec one d dictionary} to the two-dimensional setting. 
Recall from Remark \ref{rem visualize 2-morphism} the notational conventions for a 2-morphism given by a factorization algebra $\F$:
$$\includestandalone{pics_tex/pic-2-morphism_small}\, .$$
The source and target objects are given by locally constant factorization algebras $\Rr$ and $\Ss$ and therefore are $E_2$ algebras.
The factorization algebras $\Aa$ and $\Bb$ on the lines (i.e., on the two connected components of $X_1\setminus X_0$) are locally constant and therefore $E_1$ algebras.
Since they come from the factorization algebra $\F$ and so we have the structure maps for the basic disks,
we can view $\Rr$ and $\Ss$ as acting on $\Aa$ and $\Bb$ and hence can be viewed as bimodules.
Finally, the same argument as in the one-dimensional case implies that $M$ can be thought of as a bimodule for $\Aa$ and~$\Bb$.
Moreover, this structure is compatible with the actions of $\Rr$ and~$\Ss$.

Note that we will never use this putative identification in theorems or their proofs.
\end{remark}

\section{Full 2-dualizability of~\texorpdfstring{$\Alg_2(\S)$}{Alg2(S)}}
\label{sec: 2d case}

In this section we prove our main theorem just in the 2-dimensional case, with many pictures.
This case allows us to introduce all the crucial ideas and techniques --- even for the general case --- in a setting where the meaning and motivations can literally be seen;
the proof of the general case then becomes conveniently compact and, we hope, easier to understand.

In dimension two, proving our main theorem reduces to proving that
\begin{enumerate}
\item every object has a dual, and
\item every 1-morphism has a left and right adjoint.
\end{enumerate}
We will show (1) in Proposition \ref{prop duals exist} and (2) in Proposition \ref{prop adjoints exist}.

\subsection{Constructing new elements in \texorpdfstring{$\Alg_2(\S)$}{Alg2(S)}}\label{sec constructions}

To prove the theorem, we first need some techniques for constructing new objects, 1- and 2-morphisms in $\Alg_2(\S)$, which we will use later to construct the (co)evaluation and (co)unit maps.
With these techniques in hand, we prove a pair of results, Propositions \ref{prop adjoints exist} and \ref{prop duals exist}, that together imply the main theorem in the 2-dimensional case.

The objects and morphisms in $\Alg_2(\S)$ are encoded by constructible factorization algebras of a special class of stratifications, but we can produce nontrivial examples of such objects and morphisms by using more general stratifications and pushforwards.
In other words, we can take advantage of the greater flexibility of factorization methods to produce interesting constructions inside the straitjacket of this fixed class of stratifications.

\subsubsection{Diffeomorphisms}

Diffeomorphisms that preserve the stratifications are the simplest source of new objects, 1-, and 2-morphisms.

The simplest case is when there is no stratification at all.
Much as locally constant sheaves are invariants of diffeomorphism type, 
a crucial feature of locally constant factorization algebras is that they are preserved under diffeomorphisms of framed manifolds,
a feature which is explored and verified in \cite{LurieHA, AyalaFrancis}.
We record this property as a lemma for convenient referencing.

\begin{lemma}\label{lemma orientation preserving}
Let $\F$ be a locally constant factorization algebra on $(0,1)^2$. Every orientation-preserving diffeomorphism $\phi: (0,1)^2 \to (0,1)^2$ determines an equivalence of locally constant factorization algebras $\widetilde{\phi}: \F \to \phi_* \F$.
\end{lemma}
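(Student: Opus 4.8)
The plan is to exhibit the equivalence $\widetilde{\phi}\colon \F \to \phi_*\F$ directly from the definitions, using the fact that a locally constant factorization algebra is determined by its values on framed disks together with its structure maps, and that $\phi$ is a diffeomorphism. Recall that $(\phi_*\F)(U) = \F(\phi^{-1}(U))$ for every open $U \subseteq (0,1)^2$. So the components of the desired map are, for each $U$, maps $\F(U) \to \F(\phi^{-1}(U))$, and the natural candidate is the structure map of $\F$ associated to an inclusion relating $U$ and $\phi^{-1}(U)$ — except of course $U$ and $\phi^{-1}(U)$ need not be nested. This is precisely where local constancy enters.

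First I would reduce to the level of disks. Since $\F$ is locally constant on $(0,1)^2$ (the empty stratification), for any inclusion $D_1 \hookrightarrow D_2$ of framed open disks the structure map $\F(D_1) \xrightarrow{\simeq} \F(D_2)$ is an equivalence. Given a disk $D$, the image $\phi(D)$ is again a disk (as $\phi$ is a diffeomorphism), and I want to compare $\F(D)$ with $(\phi_*\F)(\phi(D)) = \F(D)$; more usefully, I compare $\F(D)$ with $(\phi_*\F)(D) = \F(\phi^{-1}(D))$. Because $\phi$ is orientation-preserving, it is isotopic through framed embeddings to a map carrying $\phi^{-1}(D)$ into a disk comparable to $D$, so local constancy supplies a canonical equivalence $\F(\phi^{-1}(D)) \xrightarrow{\simeq} \F(D)$. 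The cleanest way to package this is to invoke the invariance of locally constant factorization algebras under framed (orientation-preserving) diffeomorphisms established in \cite{LurieHA, AyalaFrancis}: the diffeomorphism $\phi$ acts on the $\infty$-category of locally constant factorization algebras, and the functoriality of this action gives the comparison map $\widetilde{\phi}$ together with the compatibility with structure maps.

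Concretely, I would organize the construction as follows. The orientation-preserving diffeomorphism group of $(0,1)^2 \cong \RR^2$ is connected (indeed contractible onto $\mathrm{SO}(2)$, hence path-connected), so $\phi$ is isotopic to the identity through orientation-preserving diffeomorphisms. Under the equivalence between locally constant factorization algebras and $E_2$-algebras (Theorem 5.4.5.9 of \cite{LurieHA}), such an isotopy induces an equivalence of the associated $E_2$-algebras, and transporting back gives the desired equivalence $\widetilde{\phi}\colon \F \xrightarrow{\simeq} \phi_*\F$. The naturality of this assignment — that it respects the structure maps and is compatible with composition of diffeomorphisms — follows from functoriality of the constructions, so $\widetilde{\phi}$ is genuinely a map of factorization algebras and not merely an abstract equivalence of values.

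The main obstacle, and the only real content, is verifying that the componentwise equivalences assemble into a \emph{morphism of factorization algebras}, i.e.\ that the squares relating structure maps of $\F$ and of $\phi_*\F$ commute coherently. This is where one must use that $\phi$ preserves the framing (orientation) rather than merely being a topological homeomorphism: the identification of $\F(\phi^{-1}(D))$ with $\F(D)$ depends on a contractible choice precisely because the space of framed self-embeddings is contractible, and one needs the coherence of these choices across all nested families of disks. I would either cite the framed-diffeomorphism-invariance results of \cite{LurieHA, AyalaFrancis} to obtain this coherence for free, or, if a hands-on argument is wanted, build $\widetilde{\phi}$ as the transport along a chosen isotopy $\phi_t$ and check that different isotopies give homotopic maps using the simple connectivity of the orientation-preserving diffeomorphism group. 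Since the statement is recorded merely as a lemma for convenient referencing, invoking the cited invariance results is the appropriate and expected route.
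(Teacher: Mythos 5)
Your proposal is correct and takes essentially the same route as the paper, which offers no proof beyond citing the framed-diffeomorphism invariance of locally constant factorization algebras established in \cite{LurieHA, AyalaFrancis}. One small caveat on your optional hands-on alternative: $\mathrm{Diff}^+((0,1)^2)$ deformation retracts onto $SO(2)\simeq S^1$ and so is connected but \emph{not} simply connected, so different isotopies from $\phi$ to the identity can differ by the rotation action on the $E_2$-algebra; this does not affect the lemma as stated, which only asks that $\phi$ determine some equivalence, but the claimed uniqueness-up-to-homotopy of $\widetilde{\phi}$ would require more care.
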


\begin{ex}\label{ex: inversion}
Let $r:(0,1)\to (0,1)$ be the reflection of the interval $x\mapsto 1-x$. Then let $inv = r\times r: (0,1)^2 \to (0,1)^2$ denote the inversion $(x,y) \mapsto (1-x,1-y)$.

Given an object of $\Alg_2(\S)$, consisting of a locally constant factorization algebra $\Rr$ on $(0,1)^2$, Lemma \ref{lemma orientation preserving} implies that $inv_* \Rr \simeq \Rr$, as $inv$ is orientation-preserving. Hence the objects represented by $\Rr$ and $inv_* \Rr$ are equivalent.

We now consider the consequences of applying inversion to a 1-morphism. For example, consider the stratification on the box $(0,1)^2$ given by the vertical bisector $\{\frac 12\} \times (0,1)$ and let $\F$ be a constructible factorization algebra on this stratified space.
Recall from Remark \ref{rem visualize morphism} that we visualize the 1-morphism determined by $\F$ by
$$
\includestandalone{pics_tex/pic-morphism} \,.
$$
If we apply inversion, then the pushforward $inv_* \F$ corresponds to
$$
\includestandalone{pics_tex/pic-morphism-op} \,.
$$
In other words, we simply swap the sides on which $\Rr$ and $\Ss$ appear,
since $inv_* \Rr \simeq \Rr$ and $inv_* \Ss \simeq \Ss$.
Disks intersecting the line are assigned the value $\Aa$. However, as the vertical axis is reflected, however, the factorization structure changes: we obtain the ``opposite'' factorization algebra $\Aa^{\op}$ to~$\Aa$.
~\hfill$\mpim$\end{ex}

Using an orientation-reversing diffeomorphism may give a non-equivalent factorization algebra.
\begin{definition}
For any locally constant factorization $\F$ on $(0,1)^2$, 
let $\F^{\rev}$ denote the pushforward $rev_* \F$, where $rev = r\times \id: (0,1)^2 \to (0,1)^2$ sends $(x,y)$ to $(1-x,y)$.
\end{definition}

This definition leads immediately to our first example, which is the simplest example arising from orientation reversal.
\begin{ex}\label{ex reverse object}
Given an object in $\Alg_2(\S)$ determined by a locally constant factorization algebra $\Rr$ on $(0,1)^2$, its {\em reverse}\footnote{See Construction 5.2.5.18 in \cite{LurieHA} for the analogous construction for $E_k$-algebras.} is the object determined by $\Rr^{\rev}$.
~\hfill$\mpim$\end{ex}

\subsubsection{Using the collapse maps}

These two examples show how we can use certain diffeomorphisms of $(0,1)^2$ to produce new objects and 1-morphisms in $\Alg_2(\S)$. To produce more interesting examples, a key tool are the following two Lemmata, which generalize these examples. They follow from unpacking the definition of constructible and the collapse-and-rescale-maps.

\begin{lemma}\label{lemma stripe}
Let $0< b<a<1$.
Assume we are given a stratification of $(0,1)^2$ which is homotopic to one  of the form $\{s_1,\ldots,s_k\}\times (0,1)$, i.e.~a cylinder on a stratification of the first $(0,1)$, and is contained in $\left(b, a\right)\times (0,1)$.
If $\F$ is a constructible factorization algebra on this stratified space, then $(\varrho^b_a\times \id)_*\F$ together with its induced stratification is a 1-morphism in $\Alg_2(\S)$.
\end{lemma}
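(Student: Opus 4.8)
The plan is to reduce to the case of a literal cylinder stratification and then invoke the constructibility of pushforwards along collapse-and-rescale maps. First I would use the hypothesis that the given stratification is homotopic to one of the form $\{s_1,\ldots,s_k\}\times(0,1)$: by the diffeomorphism-invariance of constructible factorization algebras (the stratified analogue of Lemma \ref{lemma orientation preserving}), we may straighten the strata by an ambient isotopy supported in $(b,a)\times(0,1)$ and thereby assume, up to equivalence in $\Alg_2(\S)$, that the stratification is exactly $\{s_1,\ldots,s_k\}\times(0,1)$ with each $s_i\in(b,a)$. This replaces $\F$ by an equivalent constructible factorization algebra on the cylinder stratification without altering the 1-morphism it determines.

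Next I would analyze the effect of the map $\varrho^b_a\times\id$ on this stratification. Since $\varrho^b_a$ collapses the whole interval $[b,a]$ to the single point $c:=\frac{b}{1-(a-b)}$ and is a rescaling diffeomorphism elsewhere, the map $\varrho^b_a\times\id$ carries the strip $[b,a]\times(0,1)$ onto the single vertical line $\{c\}\times(0,1)$ and is a stratified diffeomorphism on the complement of the strip. In particular every vertical stratum $\{s_i\}\times(0,1)$, lying inside the strip, is sent into $\{c\}\times(0,1)$, so the induced stratification on the target $(0,1)^2$ is the single vertical line $\{c\}\times(0,1)$. This is precisely the stratification type required of a 1-morphism in $\Alg_2(\S)$ (a single codimension-one hyperplane of the form $\{a^1\}\times(0,1)$), so after the allowed rescaling the candidate $(\varrho^b_a\times\id)_*\F$ carries exactly the geometric data of a 1-morphism.

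It then remains to check that $(\varrho^b_a\times\id)_*\F$ is constructible with respect to this single-line stratification, which is where I would appeal to Lemma \ref{lemma constructible pushforward}. The key point --- and the main obstacle --- is to verify that $\varrho^b_a\times\id$ is an adequately stratified map relative to the cylinder stratification on the source and the single-line stratification on the target. Away from the strip this map is a stratified (local) diffeomorphism, and over the collapsed line it restricts to the projection $[b,a]\times(0,1)\to\{c\}\times(0,1)$, which is a locally trivial stratified fibration; both of these appear on the list of adequately stratified maps following Lemma \ref{lemma constructible pushforward}. Writing $\varrho^b_a\times\id$ as the collapse-and-rescale map in the first coordinate crossed with the identity in the second, and checking that disks of each basic type in the target pull back to disks of a single basic type for $\F$, shows that the hypotheses of Lemma \ref{lemma constructible pushforward} are met. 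Constructibility of the pushforward then follows, so $(\varrho^b_a\times\id)_*\F$ together with its induced single-line stratification is a 1-morphism in $\Alg_2(\S)$, as claimed.
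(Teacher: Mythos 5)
Your argument is correct and takes essentially the same route the paper intends: the paper gives no detailed proof, saying only that the lemma ``follows from unpacking the definition of constructible and the collapse-and-rescale-maps,'' which is exactly what you do by straightening to a literal cylinder, observing that the induced stratification is the single vertical line $\{c\}\times(0,1)$ with $c=\tfrac{b}{1-(a-b)}$, and invoking Lemma~\ref{lemma constructible pushforward}. One small caveat: over the collapsed line the map is not literally a locally trivial stratified fibration (the fibers jump from points to intervals $[b,a]\times\{y\}$), but this does not affect your conclusion, since collapse-and-rescale maps appear as a separate item on the paper's list of adequately stratified maps, which you also cite.
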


\begin{lemma}\label{lemma cross}
Let $0< b<a <1$ and $0<d<c<1$.
Assume we are given a stratification of $(0,1)^2$ whose intersection with $(0,1) \times \left((0,d]\cup [c,1)\right)$ is homotopic to a cylinder on a stratification of the first component, and is contained in $\left(b, a\right)\times (0,1)$.
If $\F$ is a constructible factorization algebra on this stratified space, then $(\varrho^b_a\times \varrho^d_c)_*\F$ together with its induced stratification is a 2-morphism in $\Alg_2(\S)$.
\end{lemma}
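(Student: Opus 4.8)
The plan is to show two things: that the map $p := \varrho^b_a \times \varrho^d_c$ carries the given stratification onto the standard flag indexing a $2$-morphism, and that the pushforward $p_*\F$ is constructible with respect to that flag. Writing $a^1 := \tfrac{b}{1-(a-b)}$ and $a^2 := \tfrac{d}{1-(c-d)}$ for the common values taken by $\varrho^b_a$ on $[b,a]$ and by $\varrho^d_c$ on $[d,c]$, the target stratification will be the flag $\{(a^1,a^2)\} \subset \{a^1\}\times(0,1) \subset (0,1)^2$ described in Section~\ref{sec: fact Alg_n}, so the content is that $p_*\F$ is a constructible factorization algebra for this flag.

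First I would compute the image stratification, identifying the role of each hypothesis. Since the whole stratification is contained in $(b,a)\times(0,1)$ and $\varrho^b_a$ collapses $[b,a]$ to the single value $a^1$, the map $p$ sends every stratum into the line $\{a^1\}\times(0,1)$; the $1$-dimensional stratum maps onto all of it. The hypothesis that the stratification restricted to $(0,1)\times\left((0,d]\cup[c,1)\right)$ is (homotopic to) a cylinder on a stratification of the first factor means there are no $0$-dimensional strata in the top and bottom strips, so every $0$-stratum lies in the middle band $(0,1)\times(d,c)$. Applying $\varrho^d_c$, which collapses $[d,c]$ to $a^2$, therefore sends all of these points to the single point $(a^1,a^2)$, while the two strips map onto the two halves of the line. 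Up to the usual normalization by a stratum-preserving diffeomorphism (harmless, since such a local diffeomorphism of stratified spaces preserves constructibility by Lemma~\ref{lemma constructible pushforward}), the induced stratification is exactly the flag above.

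For constructibility I would factor $p = (\id\times\varrho^d_c)\circ(\varrho^b_a\times\id)$ and apply Lemma~\ref{lemma constructible pushforward} twice. Each factor is a collapse-and-rescale map of the kind used to define composition in Section~\ref{sec: fact Alg_n}, hence adequately stratified. After $\varrho^b_a\times\id$ one obtains, by that lemma, a constructible factorization algebra on the line $\{a^1\}\times(0,1)$ carrying finitely many marked points (at the heights in $(d,c)$ of the original $0$-strata) --- the same mechanism that produces a $1$-morphism in Lemma~\ref{lemma stripe}, now with dots on the line. Since pushforward composes, applying $\id\times\varrho^d_c$ to this intermediate object merges all the marked points into $(a^1,a^2)$ and yields $p_*\F$ as a constructible factorization algebra on the flag.

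The step I expect to be the main obstacle is confirming that $p$ is genuinely adequately stratified near the collapsed point $(a^1,a^2)$, where two directions are collapsed at once and, in general, several $0$-strata are identified to a single point; one must check that the structure maps of $p_*\F$ for inclusions of basic disks around $(a^1,a^2)$ are equivalences. This is precisely where the cylinder hypothesis does the work: it confines all the interesting $0$-dimensional data to the band $(d,c)$ that collapses to the point, and it guarantees that just above and below the marked point the pushforward sees only the $1$-dimensional stratum, so local constancy along the line away from $(a^1,a^2)$ is immediate. With this verified, $p_*\F$ together with its induced flag stratification is a $2$-morphism in $\Alg_2(\S)$.
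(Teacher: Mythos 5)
Your proof is correct and follows exactly the route the paper intends: the paper offers no written proof of Lemma~\ref{lemma cross} beyond the remark that it ``follows from unpacking the definition of constructible and the collapse-and-rescale maps,'' and your argument --- locating the image flag, factoring $\varrho^b_a\times\varrho^d_c$ into two collapse maps, and invoking Lemma~\ref{lemma constructible pushforward} for each --- is precisely that unpacking. The observation that the cylinder hypothesis forces all $0$-strata into the band collapsing to $(a^1,a^2)$, so that the induced stratification is the standard flag of a $2$-morphism, is the key point, and you have it.
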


The following figures depict examples of such stratifications, the left one for Lemma \ref{lemma stripe} and the right one for Lemma~\ref{lemma cross}.
\begin{center}
\includestandalone{pics_tex/pic-lemma-stripe}\hspace{1.5cm}\includestandalone{pics_tex/pic-lemma-cross}
\end{center}
We will use these lemmas to produce new 1- and 2-morphisms. Let us look at some first examples.

\begin{ex}\label{ex:twistframing}
Consider a diffeomorphism $\varphi$ on $(0,1)^2$
$$
\includestandalone{pics_tex/pic-twistobject}
$$
which amounts to 
\begin{itemize}
\item ``expanding and sliding'' the small interior of the dashed arc until it becomes the upper half of the box and 
\item ``shrinking and sliding'' the larger exterior until it becomes the lower half of the box.
\end{itemize}
Note that this diffeomorphism ``unbends'' the semicircle to the horizontal bisector of the box by taking the top strand and bending it to the right.

Since this diffeomorphism is orientation-preserving, the pushforward of a locally constant factorization algebra $\varphi_* \F$ is equivalent to $\F$ itself. In the next paragraph we will extract an interesting 1-morphism in~$\Alg_2$.

Consider how the diffeomorphism acts on the local framing --- we do not depict the images of the framings on the left, but rather the pattern of the images of standard framings.
We think of the framings as a record of how pushing forward affects the original horizontal and vertical product.
$$
\includestandalone{pics_tex/pic-twistframingobject}
$$
In a neighborhood of the left edge, the framing looks like the original framing (the ``page framing'').
In a neighborhood of the right edge, the framing is the inverted framing. Let these neighborhoods be given by $(0,b)\times(0,1)$ and $(a,1)\times(0,1)$, respectively.
Note that the framing on the middle line indicates that if we added that line as a stratification, the locally constant factorization algebra on $(0,1)$ obtained by pushforward along the horizontal projection $(x,y)\mapsto y$ is the pushforward of $\F$ along the reverse of the vertical projection, namely the map $(x,y)\mapsto 1-x$.

Applying Lemma \ref{lemma stripe} with the chosen neighborhoods, 
we obtain a non-trivial 1-morphism from $\F$ to $inv_* \F\simeq\F$. 
Starting with an object $\Rr$ in $\Alg_2(\S)$, this procedure gives a 1-morphism from $\Rr$ to itself:
$$\includestandalone{pics_tex/pic-twistframing-collapse_equal}.$$
We will denote this 1-morphism by~$\Rr^{\,\circlearrowright}$

Finally, we could have chosen to apply this procedure using a diffeomorphism $\psi$ that rotates the framing counterclockwise instead, such as $\psi=\varphi^{-1}$:
$$\includestandalone{pics_tex/pic-twistframing-collapse_equal_left}.$$
Again we obtain a non-trivial 1-morphism from $\Rr$ to itself, 
which we denote by~$\Rr^{\,\circlearrowleft}$.

Note that now the factorization algebra on $(0,1)$ obtained by pushforward along the horizontal projection to the line is the pushforward of $\Rr$ along the vertical projection itself (rather than its reverse).

We claim that these 1-morphisms are inverses (up to homotopy). Observe that the target of $\Rr^{\,\circlearrowright}$  is $inv_*\Rr$. Applying the equivalence $inv_*$ to $\Rr^{\,\circlearrowleft}$, we can compose the two 1-morphisms. The glued strip is diffeomorphic, with fixed endpoints, to the strip with the constant framing, which shows the claim.
~\hfill$\mpim$\end{ex}

\begin{ex}\label{ex:counit}
Here we will produce a nontrivial 2-morphism by pushing forward along a composite of a diffeomorphism and a collapse map.

Start again with the 1-morphism $\F$ from Example \ref{ex: inversion}, visualized as
$$
\includestandalone{pics_tex/pic-morphism} \,.
$$ 
We produce a diffeomorphism $\phi$ such that the pushforward $\phi_* \F$ is visualized~as
$$
\includestandalone{pics_tex/pic-bendright} \,.
$$
Think of $\phi$ as bending the top of the line to the right and down. One way to do this is to rotate by $\pi/2$ to put $\Rr$ in the bottom half-box, apply the inverse $\varphi^{-1}$ of the diffeomorphism $\varphi$ from Example \ref{ex:twistframing}, and then rotate by $\pi/2$ to place the small interior inside the arc at the bottom of the box.

We now apply Lemma \ref{lemma cross} to obtain a 2-morphism: we use a collapse-and-rescale map for a ``cross'' that contains the small interior to the arc. Choose $0<a,b,c,d<1$ such that the division of the box into nine pieces can be visualized as follows:
$$
\raisebox{-0.45cm}{\includestandalone{pics_tex/pic-bendright_collars}} \,.
$$

Now consider the collapse-and-rescale map $\varrho^b_a\times \varrho^d_c$. Note that it
\begin{itemize}
\item collapses the innermost square to a point, 
\item compresses the middle box on the top row to a vertical blue line (i.e., it projects away the horizontal direction),
\item compresses the middle box on the bottom row to a vertical blue line (i.e., it projects away the horizontal direction),
\item likewise compresses the middle boxes on the left and right sides to horizontal dashed blue lines (by projecting away the vertical directions),
\item acts as the identity on the corner squares, and
\item rescales back to $(0,1)^2$.
\end{itemize}
If we pushforward along this map, by Lemma \ref{lemma constructible pushforward} our factorization algebra remains constructible and can be visualized as
$$
\includestandalone{pics_tex/pic-bendright_collapse} \,.
$$
By Lemma \ref{lemma cross}, this constructible factorization algebra determines a 2-morphism in $\Alg_2(\S)$ from a 1-morphism $\Bb$ (built from $\Rr$, $\Ss$, and $\Aa$ via Lemma \ref{lemma stripe}) to the identity 1-morphism on $\Rr$ (given by $\Rr$ viewed as living on the upper half of the vertical bisector) .
Note that we do not view the horizontal dashed blue line as contributing to the stratification for this constructible factorization algebra.

Let us compute the source $\Bb$. It is produced via pushforward by compressing a strip glued from several more elementary pieces that we have already encountered above:
$$
\includestandalone{pics_tex/pic-bendright-strip-composition}\, .
$$
The first piece is just the 1-morphism itself. 
We considered the second and last pieces in Example \ref{ex:twistframing} and the third piece in Example~\ref{ex: inversion}. 
This composition is
$$\Aa \circ_{\Ss} \Ss^{\,\circlearrowright} \circ_{\Ss} \Aa^{op} \circ_{\Rr} \Rr^{\,\circlearrowleft}$$
in our shorthand notation.~\hfill$\mpim$\end{ex}

\subsubsection{Fold maps}\label{sec folding}
Another way of producing new 1-morphisms is by ``folding''. (We could also produce new 2-morphisms this way, but we will not need it.)

Start with a 1-morphism given by a factorization algebra $\F$ which we depict as in Example \ref{ex: inversion}.
Now consider ``folding up and over'' the right edge of the square in 3-dimensional space as shown, such that the stratification is exactly at the right:
$$
\includestandalone{pics_tex/pic-fold_morphism} \, .
$$
More precisely, consider the embedding $\tilde f: (0,1) \hookrightarrow (0,1) \times \RR$ sending $x$ to $\left(\frac12\sin(\pi x),\cos(\pi x)\right)$,
which bends the right edge of the plane up and over to the left edge. We will use its product with the identity on $(0,1)$, as pictured above.

Now we post-compose the embedding $\tilde f$ with the projection $(0,1) \times \RR \to (0,1)$, which forgets the second direction.
We will call the composite map the ``fold map''\footnote{When applied to an identity 1-morphism this terminology matches with Lurie's definition in Construction 4.6.3.7 in \cite{LurieHA}.} and denote it suggestively by~$f$.

The pushforward $(f\times \id_{(0,1)})_*\F$  determines a 1-morphism in $\Alg_2(\S)$, visualized~by
$$
\includestandalone{pics_tex/pic-folded_morphism}
$$
This 1-morphism $\Aa^{>}$ goes from $\Ss^{\rev}\otimes \Rr$ to~$\unit$.

Similarly, for a 1-morphism given by a factorization algebra $\G$ with source $\Ss$ and target $\Tt$, 
we can fold ``up and over'' the left edge of the square in 3-dimensional space and project back to the plane using a fold map
\[
\begin{tikzcd}[row sep=tiny]
g:&[-25pt] (0,1) \arrow[hookrightarrow] {r} & (0,1)\times  \R \arrow[->>]{r} & (0,1), \\ 
& x \arrow[|->]{rr} & & 1-\frac12\sin(\pi x)
\end{tikzcd}
\]
to obtain a 1-morphism $\Bb^<$ from $\unit$ to $\Tt\otimes \Ss^{rev}$.
It looks like
$$
\includestandalone{pics_tex/pic-morphism2} \hspace{1cm}\includestandalone{pics_tex/pic-foldright} \hspace{1cm} \includestandalone{pics_tex/pic-folded_right_morphism} \, .
$$
We would like to compose these 1-morphisms, so we first tensor them with identity morphisms on $\Tt$ and $\Rr$ and then compute
$$\left(\Aa^> \otimes \Rr \right) \circ \left(\Tt\otimes \Bb^<\right)$$
to obtain a morphism from $\Rr$ to $\Tt$.
Pictorially, consider the projection onto the plane of a zigzag obtained by gluing the folded strips:
$$
\includestandalone{pics_tex/pic-snake_composition}\, .
$$
This composition of 1-morphisms comes from the pushforward along this projection.

Observe that there is a diffeomorphism between the snaking surface above and a multicolored square $(0,1)^2$ as embedded manifolds in the 3-cube.
Simply pull along the ends as directed by the oscillating arrows:
$$
\includestandalone{pics_tex/pic-snake_pull}\, .
$$
This diffeomorphism shows that after applying the collapse maps to get the composition of the morphisms, the compositions will be equivalent:
\begin{equation}\label{eqn fold}
\left(\Aa^> \otimes \id_{\Rr} \right) \circ_{\Tt\otimes \Ss^{rev} \otimes{\Rr}} \left(\id_{\Tt} \otimes \Bb^< \right) \simeq \Aa \circ_{\Ss} \Bb.
\end{equation}
Similarly, we could have folded the right edge of the plane for $\Aa$ down (instead of up) and to the left edge; and similarly for $\Bb$. 
Gluing these would produce a snake that is the mirror in the vertical direction of the one we have already drawn. 
But when composing, that direction is projected away, so we get the same morphisms, and
\begin{equation}\label{eqn fold2}
\left(  \id_{\Rr} \otimes \Aa^> \right) \circ_{\Rr \otimes \Ss^{rev} \otimes \Tt} \left( \Bb^< \otimes \id_{\Tt} \right) \simeq \Aa \circ_{\Ss} \Bb.
\end{equation}

\begin{remark}\label{rem source eval}
We note here that the source of $\Aa^>$ is not $\Ss^{\rev}\otimes\Rr$ on the nose, 
but rather a rescaled version: 
it is the pushforward of  $\Ss^{\rev}\otimes\Rr$ along the map $(0,1)\to (0,1), x\mapsto \sin(\frac12 \pi x)$. 
Since this map is invertible and orientation-preserving, 
pushing forward along it determines an equivalence of the locally constant factorization algebras. 
We chose to suppress the equivalence from the notation. 
A similar issue appears for $\Bb^<$ and its target~$\Tt\otimes \Ss^{rev}$.
\end{remark}

\begin{remark}\label{rem creasing}
There is another approach that does work on the nose: 
choose a different fold map that is piecewise linear and arises from ``creasing'' at the fold. 
This would amount to modifying the fold maps so that
\[
\begin{array}{cccc}
f_1: & (0,1) &\to & (0,1) \\
& x &\mapsto &
\begin{cases}
x, & 0\leq x\leq \frac12\\
1-x, & \frac12 \leq x\leq 1\\
\end{cases} 
\end{array}
\]
and
\[
\begin{array}{cccc}
g_1:& (0,1) & \to & (0,1),\\
& x & \mapsto &
\begin{cases}
1-x, & 0\leq x\leq \frac12\\
x, & \frac12 \leq x\leq 1
\end{cases}
\end{array}.
\]
We find the mental image of ``stretching the snake'' more intuitive, however,
which is the reason for our choices above.
\end{remark}

\subsection{Duals for objects}
This proposition contains the first half of the proof of the 2-dimensional main theorem.

\begin{prop}\label{prop duals exist}
Every object in $\Alg_2(\S)$ has a dual.
\end{prop}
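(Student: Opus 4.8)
The plan is to take the reverse $\Rr^{\rev}$ as the dual and to build the evaluation and coevaluation 1-morphisms directly from the fold construction of Section~\ref{sec folding} applied to the identity 1-morphism on $\Rr$. Concretely, applying the fold map $f$ to $\id_\Rr$ produces the 1-morphism $\Aa^>$ from $\Ss^{\rev}\otimes\Rr$ to $\unit$; since the source and target of $\id_\Rr$ are both $\Rr$, this is a 1-morphism $\Rr^{\rev}\otimes\Rr\to\unit$, which I take to be $\operatorname{ev}$. Dually, applying the fold map $g$ to $\id_\Rr$ produces $\Bb^<$ from $\unit$ to $\Tt\otimes\Ss^{\rev}$, i.e.\ a 1-morphism $\unit\to\Rr\otimes\Rr^{\rev}$, which I take to be $\operatorname{coev}$. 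The orders of the tensor factors match exactly the requirements $\operatorname{ev}\colon\Rr^\vee\otimes\Rr\to\unit$ and $\operatorname{coev}\colon\unit\to\Rr\otimes\Rr^\vee$ from the definition of a dual, so that $\Rr^\vee=\Rr^{\rev}$ is the natural candidate (the two-dimensional analogue of $A\mapsto A^{\opp}$). That $\Rr^{\rev}$ is again a legitimate object is immediate, since it is the pushforward of a locally constant factorization algebra along the diffeomorphism $rev$ and hence remains locally constant.

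The snake identities then follow formally from the two fold-composition identities \eqref{eqn fold} and \eqref{eqn fold2}, taking both input 1-morphisms to be $\id_\Rr$ (so that $\Aa=\Bb=\Rr$ and $\Rr=\Ss=\Tt$, whence $\Aa^>=\operatorname{ev}$ and $\Bb^<=\operatorname{coev}$). In that case the right-hand side of each identity is $\id_\Rr\circ_\Rr\id_\Rr\simeq\id_\Rr$, while the left-hand sides become precisely the two snake composites: \eqref{eqn fold2} yields $(\id_\Rr\otimes\operatorname{ev})\circ(\operatorname{coev}\otimes\id_\Rr)\simeq\id_\Rr$ and \eqref{eqn fold} yields $(\operatorname{ev}\otimes\id_\Rr)\circ(\id_\Rr\otimes\operatorname{coev})\simeq\id_\Rr$. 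These are exactly the two snake identities, so $\Rr^{\rev}$ together with $\operatorname{ev}$ and $\operatorname{coev}$ exhibits $\Rr$ as dualizable; the equivalences, as recorded in the preliminary remarks, descend to identities in the homotopy category.

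The main work, and the step I expect to be the genuine obstacle, is bookkeeping the rescalings and framings rather than any conceptual difficulty. As flagged in Remark~\ref{rem source eval}, the source of $\Aa^>$ and the target of $\Bb^<$ are not literally $\Rr^{\rev}\otimes\Rr$ and $\Rr\otimes\Rr^{\rev}$ but rescaled pushforwards thereof, and I would absorb these into the suppressed equivalences arising from orientation-preserving reparametrizations of $(0,1)$, exactly as in that remark. One must also confirm that the fold construction, described there for a general 1-morphism $\F$, specializes sensibly to $\id_\Rr$: here the stratification is the trivial one carrying only the auxiliary data, and the snaking surface of Section~\ref{sec folding} degenerates to the zigzag built from copies of $\Rr$ and $\Rr^{\rev}$. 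Since the diffeomorphism straightening the zigzag in \eqref{eqn fold} and \eqref{eqn fold2} is insensitive to which locally constant factorization algebra decorates the strip, the two identities hold verbatim for $\id_\Rr$, and the proposition follows.
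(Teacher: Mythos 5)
Your proof is correct and follows essentially the same route as the paper: the dual is $\Rr^{\rev}$, the evaluation and coevaluation arise from the fold maps of Section~\ref{sec folding} applied to (a 1-morphism equivalent to) $\id_\Rr$, and the snake identities are exactly the specializations of \eqref{eqn fold} and \eqref{eqn fold2} to $\Aa=\Bb=\Rr$, with the rescaling subtleties handled just as in Remark~\ref{rem source eval}. The only cosmetic difference is that the paper works with the equivalent 1-morphism carrying the nontrivial vertical-bisector stratification rather than the literal identity, which changes nothing.
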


\begin{proof}
Pick an object in $\Alg_2(\S)$, represented by a locally constant factorization algebra $\Rr$ on $(0,1)^2$. 
A dual will be provided by $\Rr^{\rev}$ from Example \ref{ex reverse object}, 
namely the pushforward of $\Rr$ along the reflection map in the first coordinate of $(0,1)^2$, namely $rev=r\times \id: (x,y) \mapsto (1-x,y)$. 
We need to exhibit the evaluation and coevalution maps and verify they satisfy the zigzag identities.

In a first step, we consider a 1-morphism, which we still denote by $\Rr$, whose underlying factorization algebra still is $\Rr$, but adding the stratfication given by a line in the middle. (Note that technically this is not the identity on the object, but it is equivalent to it.)

We apply the construction from Section \ref{sec folding}. 
Then $\mathrm{ev} = \Rr^{\rotatebox{-90}{$\curvearrowleft$}}$ is an evaluation and $\mathrm{coev} = \Rr^{\rotatebox{90}{$\curvearrowleft$}}$ is a coevaluation morphism for $\Rr$, as is exhibited by the identities \eqref{eqn fold} and~\eqref{eqn fold2}:
\begin{align*}
(\id_{\Rr}\otimes \operatorname{ev}) \circ (\operatorname{coev}\otimes \id_{\Rr}) & \simeq \id_{\Rr},\\
( \operatorname{ev} \otimes \id_{\Rr}) \circ (\id_{\Rr} \otimes \operatorname{coev})& \simeq \id_{\Rr},
\end{align*}
which is what we needed to show.
\end{proof}

\begin{remark}\label{rem dual subtleties}
Note that in light of Remark~\ref{rem source eval}, 
the source of the evaluation is only equivalent to $\Rr^{\rev}\otimes \Rr$ and the target of the coevaluation is only equivalent to $\Rr\otimes \Rr^{\rev}$. 
But ``having a dual'' is preserved under equivalences. We could just compose the exhibited (co)evaluation with the equivalence to see that our construction suffices. 
An alternative modification would be to use the fold maps $f_1$ and $g_1$ from Remark~\ref{rem creasing}.
More generally, there is a space of possible fold maps with which to construct an evaluation and coevaluation pair. 
Similarly, there is a space of orientation-reversing maps to use in place of $r$ in constructing the putative dual $\Rr^{\rev}$. 
The space of dualizability data is contractible (see \cite[Example 2.4.12]{LurieTFT}), however, so it suffices to exhibit just one triple of dualizability data.
\end{remark}

\subsection{Adjoints for 1-morphisms}

The remaining step in proving the 2-dimensional main theorem is to show the existence of left and right adjoints, which is the content of the following Proposition.
We will use the constructions from Section \ref{sec constructions} to construct (co)unit maps. 

\begin{remark}
The pictures we use have a strong resemblance to string diagrams, which should be no surprise.
The very set-up of constructible factorization algebras allows one to read such diagrams as the stratification of a constructible factorization algebra,
and we used that perspective in constructing $\Alg_2(\S)$ itself.
\end{remark}

\begin{prop}\label{prop adjoints exist}
Every morphism in the $(\infty,2)$-category $\Alg_2(\S)$ has a left and a right adjoint.
\end{prop}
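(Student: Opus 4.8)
The plan is to mimic the proof of Proposition~\ref{prop duals exist}, but to fold the stratifying \emph{line} rather than the whole square. Start with a $1$-morphism from $\Rr$ to $\Ss$ encoded by a constructible factorization algebra $\F$ carrying $\Aa$ on its line. The candidate adjoint $1$-morphism $\Ss\to\Rr$ is provided by $inv_*\F$, which by Example~\ref{ex: inversion} swaps the sides on which $\Rr$ and $\Ss$ sit and labels the line by $\Aa^{\op}$. The unit and counit $2$-morphisms are produced exactly as in Example~\ref{ex:counit}: bending the top of the line to the right and down yields a ``cap'', and the analogous bend of the bottom of the line yields the matching ``cup'', in each case applying Lemma~\ref{lemma cross} to a ``cross''-shaped collapse-and-rescale map so as to land back in the allowed class of stratifications. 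As computed at the end of Example~\ref{ex:counit}, these bends realize $2$-morphisms between the relevant identity $1$-morphism and the bent composite $\Aa \circ_{\Ss} \Ss^{\,\circlearrowright} \circ_{\Ss} \Aa^{\op}\circ_{\Rr} \Rr^{\,\circlearrowleft}$. The second one-sided adjoint is obtained from the vertical and horizontal mirror images of these same bends.

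Next I would verify the two triangle (snake) identities. As in Section~\ref{sec folding} and Proposition~\ref{prop duals exist}, each composite appearing in a snake identity is the pushforward of $\F$ along a projection to $(0,1)^2$ of a zig-zag surface living in $(0,1)^2\times\RR$. The key geometric fact is that this zig-zag surface is diffeomorphic rel boundary to the straight strip representing the identity $1$-morphism, precisely via the ``pull the snake straight'' diffeomorphism of Section~\ref{sec folding}. Pushing forward along this diffeomorphism, together with the collapse-and-rescale maps, exhibits each composite as equivalent to the appropriate identity $2$-morphism; and since it suffices to verify the snake identities up to equivalence, this produces genuine adjunctions in the homotopy bicategory $h_2\Alg_2(\S)$.

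The main obstacle is the bookkeeping of framings. Bending a line in a framed surface rotates the local framing, so the bent composites necessarily carry the invertible framing-twist $1$-morphisms $\Rr^{\,\circlearrowright}$, $\Rr^{\,\circlearrowleft}$, $\Ss^{\,\circlearrowright}$, $\Ss^{\,\circlearrowleft}$ of Example~\ref{ex:twistframing}, and one must check that when the zig-zag is straightened the net framing rotation is trivial so that these twists cancel. This is exactly where Example~\ref{ex:twistframing} is used: $\Rr^{\,\circlearrowright}$ and $\Rr^{\,\circlearrowleft}$ are mutually inverse (and likewise for $\Ss$), so the total twist accumulated along a straightened snake is the identity. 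One may either absorb these twists into the definition of the adjoint $1$-morphism or, more cleanly, take the adjoint to be $\Aa^{\op}$ itself and let the (co)unit maps carry the canceling twists. Finally, as in Remark~\ref{rem dual subtleties}, the rescaling equivalences of Remark~\ref{rem source eval} and the contractibility of the space of adjunction data mean it is enough to exhibit a single consistent choice, so these equivalences need not be tracked on the nose.
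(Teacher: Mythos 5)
Your construction is the paper's: the (co)units are the bent-line $2$-morphisms of Example~\ref{ex:counit} and its mirror images, produced via Lemma~\ref{lemma cross}, and the snake identities are checked by exhibiting diffeomorphisms that straighten the glued, S-shaped stratifications, after which pushforward along the collapse maps (which commutes with composition) gives the required equivalences. The one point where you diverge --- and where your ``cleaner'' second option fails --- is the identification of the adjoint itself. The paper does \emph{not} take $L_\F = inv_*\F$; it takes $L_\F$ to be the right half of the bent strip, with the framing supplied by Lemma~\ref{lemma stripe}, which decomposes as $\Ss^{\,\circlearrowright}\circ_\Ss \Aa^{\op}\circ_\Rr \Rr^{\,\circlearrowleft}$. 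The two framing twists sit at opposite ends of this composite and act on \emph{different} objects, so they cannot cancel against each other inside $L_\F$; the mutual inversality from Example~\ref{ex:twistframing} only helps in the full snake composite, where a $\circlearrowright$ and a $\circlearrowleft$ for the \emph{same} object become adjacent after straightening. Hence $L_\F$ is in general not equivalent to $inv_*\F$, and declaring the adjoint to be $\Aa^{\op}$ would force the counit to have source $\Aa\circ_\Ss\Aa^{\op}$ --- a $1$-morphism the bending construction does not map out of, and which is not equivalent to the bent composite without an extra (generally nonexistent) identification. Your first option, absorbing the twists into the definition of the adjoint, is the correct one and is exactly what the paper does; note also that the right adjoint, whose twists rotate the other way, is then a genuinely different $1$-morphism from the left adjoint, rather than both being ``$\Aa^{\op}$.'' With that correction the rest of your argument matches the paper's proof.
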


\begin{proof}
We need to show that any 1-morphism $\F$ in $\Alg_2(\S)$ has both a left and a right adjoint.
It is convenient to use the visual notation from the preceding subsection, 
where we describe $\F$ as having a component $\Aa$ supported on the vertical bisector going from $\Rr$ on the left to $\Ss$ on the right:
$$
\includestandalone{pics_tex/pic-morphism}\, .
$$
The examples just given will play a crucial role in constructing the adjoints.

We start by proving the existence of the left adjoint $L_\F$. 
We posit that the 1-morphism corresponding to
$$
\includestandalone{pics_tex/pic-morphism-adjoint} \, 
$$
provides the left adjoint (note that the framing technically is not part of the data of a 1-morphism, but we can apply Lemma \ref{lemma stripe}).
Our ansatz is that the 2-morphism constructed in Example \ref{ex:counit} will provide the counit of the adjunction, so we will reverse-engineer $L_\F$ by unpacking that construction.

The source of the counit 2-morphism is encoded in the bottom part,
which is given by the composite $L_\F \circ_\Ss \F$.
This composite $L_\F \circ_\Ss \F$ arises, in turn, from the pushforward map determined by the map $\varrho^b_a\times \varrho^d_c$.
(More accurately, by how that map behaves on the bottom part.)
Hence, we can read off the proposed left adjoint by looking at the right side of the bottom half of the picture before taking the pushforward. 

Visually, we are working with
$$
\includestandalone{pics_tex/pic-bendright_framing}\,
$$
and it is essential to keep track of the framing that determines the diffeomorphism used in the Lemma.
The source of the counit 2-morphism arises by compressing the bottom part, with its framing determined by Lemma~\ref{lemma stripe}:
$$
\includestandalone{pics_tex/pic-strip_morphism_adjoint} \, .
$$
Note that our proposal for the left adjoint is the 1-morphism corresponding to
$$
\includestandalone{pics_tex/pic-morphism-adjoint} \, ,
$$
the right half of that strip.

The unit 2-morphism of the adjunction is given by a similar ansatz, 
but now the interesting 1-morphism is in the upper half.
Thus we replace the diffeomorphism $\phi$ of Example \ref{ex:counit} with 
a diffeomorphism $\psi$ that  ``bends'' the lower end of the line to the left.
$$
\includestandalone{pics_tex/pic-bendleft}
$$
Now it is the framing along the top that is interesting, and the rotation now happens in the first half. 
It rotates clock-wise until the stratum, where the framing is rotated by $\pi$, and then it rotates back counter-clockwise:
$$
\includestandalone{pics_tex/pic-strip_morphism_adjoint2}
$$
Note that the left hand side is the same as the $L_\F$ proposed already.

To obtain the unit 2-morphism, we apply Lemma \ref{lemma cross} and push forward the factorization algebra along a collapsing map $\varrho$, which sends everything between the following strips along the edges to a point:
$$
\includestandalone{pics_tex/pic-bendleft_collars} \, .
$$

It remains to verify that our 2-morphisms satisfy the zigzag identities,
which boils down to examining the following pictures.
Fix diffeomorphisms $\sigma, \varsigma$ sending the stratifications on the left to those on the right (with ``page framing''): 
$$
\includestandalone{pics_tex/pic-adjunction1}
$$
$$
\includestandalone{pics_tex/pic-adjunction2}
$$
Since pushforward along the collapse maps commutes with gluing of factorization algebras (respectively, composition of 2-morphisms in $\Alg_2(\S)$), these invertible diffeomorphisms determine a weak equivalence of the necessary compositions in the zigzag identity (left pictures) to the identity bimodule (right pictures). 

For the right adjoint, the argument is similar, with some slight variations.
The right adjoint arises from the picture
$$
\includestandalone{pics_tex/pic-morphism-rightadjoint} \, .
$$
The only difference is the direction of rotations of the framings. In this case the counit arises from bending the top of the line of a 1-morphism to the left (instead of previously to the right), and the unit is given by bending the bottom of the line to the right (instead of previously to the left). The pictures for the adjunction are similar, but with the red and blue colorings reversed.
\end{proof}

\section{The general result}
\label{sec: arbitrary dim}

The generalization to arbitrary dimensions is straightforward: 
we perform essentially the same manipulations as in the proofs for~$\Alg_2(\S)$ to obtain the following result.

\begin{theorem}\label{thm main theorem}
Let $\S$ be a symmetric monoidal $(\infty,N)$-category and $\S^\Box$ is \ensuremath\otimes-sifted-cocomplete (cf.~Definition \ref{technical condition}). The symmetric monoidal $(\infty,n+N)$-category $\Alg_n(\S)$ is fully $n$-dualizable,~i.e.
\begin{enumerate}
\item\label{dual} every object in $\Alg_n(\S)$ has a dual; and
\item\label{adjoint} if $1\leq k < n$, any $k$-morphism in $\Alg_n(\S)$ has both a left and a right adjoint.
\end{enumerate}
\end{theorem}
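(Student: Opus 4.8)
The plan is to reduce the general statement to the two-dimensional arguments of Propositions~\ref{prop duals exist} and~\ref{prop adjoints exist}, by observing that all of the relevant (co)evaluation and (co)unit data are built by manipulations taking place in just two of the $n$ coordinate directions, with the remaining directions carried along as a product with the identity. By Lemma~\ref{lem truncation detects dualizability} it suffices to verify full $n$-dualizability for the $(\infty,n)$-truncation, so I would work entirely with the factorization-algebra descriptions of objects and of $k$-morphisms for $k\leq n$ from Section~\ref{sec: fact Alg_n}.

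For part~\eqref{dual}, the dual of an object $\Rr$ (a locally constant factorization algebra on $(0,1)^n$) will be $\Rr^{\rev}$, the pushforward along the reflection $r\times \id\colon (0,1)^n\to (0,1)^n$ in the first coordinate. I would produce the evaluation and coevaluation $1$-morphisms exactly as in Section~\ref{sec folding}, but applying the fold maps $f$ and $g$ in the first coordinate only: one pushes forward along $f\times \id_{(0,1)^{n-1}}$ and $g\times \id_{(0,1)^{n-1}}$, respectively. The result carries the hyperplane stratification $\{x_1=a^1\}$ demanded of a $1$-morphism, and constructibility is preserved by Lemma~\ref{lemma constructible pushforward}. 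The snake identities~\eqref{eqn fold} and~\eqref{eqn fold2} then hold for the same reason as in Proposition~\ref{prop duals exist}: the snaking region obtained by gluing the folded strips is diffeomorphic, rel endpoints, to the flat cube, this diffeomorphism being the two-dimensional snake-pulling diffeomorphism crossed with $\id_{(0,1)^{n-1}}$.

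For part~\eqref{adjoint}, I would fix $1\leq k<n$ and a $k$-morphism $f$ with the affine-flag stratification of Section~\ref{sec: fact Alg_n}. The key point is that forming adjoints of $f$ involves only the two coordinate directions $x_k$ and $x_{k+1}$: the outer part of the flag (the hyperplanes cut out by $x_1=a^1,\ldots,x_{k-1}=a^{k-1}$) is held fixed, the last $n-k-1$ coordinates are carried along as a product with the identity, and the bending-and-collapsing construction of Example~\ref{ex:counit} (together with its unit analogue) is performed in the $(x_k,x_{k+1})$-plane. This is precisely where the hypothesis $k<n$ enters: one needs the extra direction $x_{k+1}$, which exists exactly when $k+1\leq n$, in which to bend the codimension-$k$ stratum, so that the resulting units and counits are genuine $(k+1)$-morphisms. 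The left and right adjoints, with their (co)units, are then the $n$-dimensional analogues of the $1$- and $2$-morphisms built in Proposition~\ref{prop adjoints exist}, with the collapse-and-rescale maps of Lemmas~\ref{lemma stripe} and~\ref{lemma cross} replaced by their products with identities in the remaining directions; the zigzag identities again follow from invertible diffeomorphisms of stratified spaces, namely the two-dimensional diffeomorphisms $\sigma,\varsigma$ crossed with identities.

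I expect the main obstacle to be bookkeeping rather than anything conceptual: one must check that each fold map, bending diffeomorphism, and collapse-and-rescale map, when taken in the relevant coordinate plane and crossed with identities, genuinely sends an affine-flag stratification to an affine-flag stratification (so that the output is a legitimate $k$- or $(k+1)$-morphism) and that the induced framings match, so that Lemmas~\ref{lemma stripe} and~\ref{lemma cross} apply. Constructibility of every pushforward is guaranteed by Lemma~\ref{lemma constructible pushforward}, since all the maps used lie among the allowed adequately stratified maps. Once the correct pair of coordinate directions is identified, the verification of the snake and zigzag identities is identical to the two-dimensional case, as the additional coordinate directions play no active role.
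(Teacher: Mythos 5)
Your proposal is correct and follows essentially the same route as the paper's proof: the dual is $\Rr^{\rev}$ with (co)evaluation given by fold maps crossed with identities, and the adjoints of a $k$-morphism are built by performing the two-dimensional bending and collapse constructions in the $(x_k,x_{k+1})$-plane, crossed with identities in the remaining coordinates, with the zigzag identities following from stratified diffeomorphisms. The only detail the paper handles separately that you elide is the degenerate case of $k$-morphisms equivalent to identities (which may carry a trivial stratification and are dispatched by noting they are invertible), but this is a minor bookkeeping point.
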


\begin{remark}
Recall from Lemma \ref{lem truncation detects dualizability} that full $n$-dualizability is detected in the $(\infty,n)$-truncation. As the $(\infty,n)$-truncations of all three variants of the factorization higher Morita category from \cite{JFS} agree (see Remark \ref{rem variants (op)lax}), the theorem is true for all three variants.
\end{remark}

In the proof we will need notions and notation recalled at the beginning of Section~\ref{sec: fact Alg_n},
particularly stratifications given by ``affine flags''of the form
\begin{align*}
\{(a^1,\ldots, a^k)\}\times &(0,1)^{n-k} \subset \{(a^1,\ldots, a^{k-1})\}\times (0,1)^{n-k+1} \subset \cdots \\ 
&\cdots \subset \{a^1\}\times (0,1)^{n-1} \subset (0,1)^n.
\end{align*}
To give such a flag, we must specify a point $a^i \in (0,1)$ for each $1 \leq i \leq k$ that says how to split each of the first $k$ coordinates into two pieces.

The reader has already seen how this kind of stratification played a role in the 2-dimensional case,
and so should readily recognize that analogous stratifications would play a role in higher dimensions.
If the reader simply keeps in mind this idea of working with such constructible factorization algebras,
the proof should be intelligible,
but we will indicate where the auxiliary data is used so that the reader can fill in all details.

\begin{proof} 
To show claim \eqref{dual}, pick an object in $\Alg_n(\S)$, represented by a locally constant factorization algebra $\Rr$ on $(0,1)^n$.
Note that the auxiliary data for this object is the $n$-tuple of intervals $\oul{I}=(I^i_0)_{1\leq i\leq n}$ given by $I^i_0=(0,1)$ for every $1\leq i\leq n$. 
This data determines the empty stratification of~$(0,1)^n$.

We claim that a dual is given by $\Rr^{\rev}$, 
the pushforward of $\Rr$ along the map $\bar r=r\times \id_{(0,1)^{n-1}}:(0,1)^n\to (0,1)^n$,
which simply reverses the first coordinate $x \mapsto 1-x$. 
The auxiliary data of the dual object is the same as for $\Rr$, and hence specifies the empty stratification.

Both the evaluation and coevaluation are obtained via ``fold maps,'' as in Proposition \ref{prop duals exist}. 
For the evaluation, consider the product $F= f\times \id_{(0,1)^{n-1}}$, 
which acts by the fold map $f$ of Section \ref{sec folding} and by the identity on the remaining coordinates. 
An evaluation 1-morphism is given by the factorization algebra $F_*\Rr$ on $(0,1)^n$,
along with the auxiliary data of the intervals $I^1_0 =(0,\frac12]\leq [\frac12, 1)=I^1_1$ and $I^i_0=(0,1)$ for every $2\leq i \leq n$. 
The data of these intervals determines the stratification of $(0,1)^n$ by the hyperplane $\{x_1=\frac12\}$, and $F_*\Rr$ is constructible on this stratified space.

For the coevaluation, we use the other fold map $g$ and consider the product $G = g\times \id_{(0,1)^{n-1}}$. 
A coevaluation 1-morphism consists of the factorization algebra $G_*\Rr$ on $(0,1)^n$ together with the same auxiliary data as the evaluation, namely the intervals $I^1_0 =(0,\frac12]\leq [\frac12, 1)=I^1_1$ and $I^i_0=(0,1)$ for every~$2\leq i \leq n$.\footnote{As we pointed out in Remarks \ref{rem source eval}  and \ref{rem dual subtleties} for the 2-dimensional setting, 
the source of the evaluation is isomorphic on the nose to $\Rr^{\rev}\otimes \Rr$ (and similarly for the target of the coevaluation). 
It is, however, weakly equivalent, by the same argument as in earlier remarks.  
Moreover, since ``having a dual'' is preserved under equivalences, our construction suffices. 
The alternative construction of co/evaluations, given in Remark \ref{rem creasing}, would also work here.
More generally, there is a space of fold maps and associated auxiliary data: 
for example, we could have chosen the stratification to be given by any hyperplane $\{x_1=c\}$ and the fold map such that the fold is at $c$ instead of $\frac12$. 
Each of these would create a different evaluation and coevaluation pair. 
It suffices to exhibit one triple of dualizability data, however,
since the space thereof is known to be contractible.}

To show claim \eqref{adjoint}, we give the argument for the left adjoint, as the case of the right adjoint is a simple variation.

We note first that if a $k$-morphism is equivalent to an identity morphism on some $k'$-morphism for $k'<k$. Thus, it is invertible (up to an invertible higher morphism) and the inverse provides both a left and a right adjoint.
Hence it suffices to deal with the case of noninvertible $k$-morphisms.

We now remark that a noninvertible $k$-morphism will always have auxiliary data that specifies a nontrivial stratification
\begin{multline*}
\{(a^1,\ldots, a^k)\}\times (0,1)^{n-k} \subset \{(a^1,\ldots, a^{k-1})\}\times (0,1)^{n-k+1} \subset \cdots \\ \subset \{a^1\}\times (0,1)^{n-1} \subset (0,1)^n
\end{multline*}
so we will show claim (2) for a factorization algebra $\F$ constructible with respect to such a stratification.
(Note that such a $k$-morphism may nonetheless be invertible 
--- suppose the factorization algebra is actually locally constant, so that the stratification is invisible to it --- 
but our argument will still apply.)

Before giving the general argument,
we note that the 2-dimensional situation provides useful intuition. 
Consider the projection $p_{\{k,k+1\}}: (0,1)^n \to (0,1)^2$ onto the $k$th and $(k+1)$st coordinate.
The pushforward $(p_{\{k,k+1\}})_*\F$, along with the induced stratification $\{a_k\}\times (0,1)$, determines a 1-morphism in $\Alg_2(\S)$.
The argument of Proposition \ref{prop adjoints exist} then applies and produces a left adjoint in $\Alg_2(\S)$. 
We will use similar constructions here, but extended to $n$ dimensions.

It will be convenient for our general argument to borrow notation from that proof.
Recall that we produced the counit 2-morphism using two maps on the square.
There was a diffeomorphism $\phi$ 
that takes a square $(0,1)^2$ divided into two equal halves by a vertical line 
and maps it to a square where the right half is squashed into a half-disk along the bottom of the square.
(For a detailed description of $\phi$, see the initial discussion of Example \ref{ex:counit}.) 
There was also a collapse map $\varrho$ that squashes two crossing strips
(i.e., the thickened neighborhood of a cross formed by the union of a vertical and horizontal bisector of the square) 
to a cross.\footnote{To be more explicit, we note that the collapse map has the form $\varrho_a^b \times \varrho_c^d$, 
where $\varrho_s^t: (0,1) \to (0,1)$ is the 1-dimensional collapse map specified in Definition \ref{def: collapse}.
We are free to chose $a< b$ and $c< d$ as we wish,
but then we must construct $\phi$ so that the image of the right half lies inside the region collapsed by $\varrho$. 
Hence there is a space of choices of suitable pairs $(\phi,\varrho)$.}
(Look at the third figure for Example \ref{ex:counit}.) 
The counit itself arises by pushing forward along $\phi$ and then pushing forward along $\rho$.
To produce the unit 2-morphism,
we used a similar diffeomorphism $\psi$ that pushes the left half into a half-disk along the top of the square, as well as a collapse map.
(For a picture of $\psi$, see where the construction of the unit 2-morphism starts in the proof of Proposition~\ref{prop adjoints exist}.)

We now undertake the $n$-dimensional case. 
Our goal is to exhibit an adjunction triple: 
a left adjoint $k$-morphism, a unit $(k+1)$-morphism, and a counit $(k+1)$-morphism that satisfy the appropriate zigzag relations. 
We will construct such a triple using certain choices,
but the space of adjunction data is  contractible, 
so exhibiting a particular triple suffices.

To obtain the counit of the adjunction, 
we construct a map
\[
\bar\phi= \id_{(0,1)^{k-1}} \times \phi \times \id_{(0,1)^{n-k-1}}
\] 
and a compatible collapse map 
\[
\bar\varrho = \id_{(0,1)^{k-1}} \times \varrho \times \id_{(0,1)^{n-k-1}}.
\]
Let $(\alpha,\beta) \in (0,1)^2$ denote the intersection point of the cross onto which the map $\varrho$ collapses the crossing strips.
The pushforward factorization algebra $(\bar\varrho \circ \bar\phi)_* \F$ is constructible with respect to a stratification of the form
\begin{align*}
\{(a^1,\ldots, &\alpha, \beta)\}\times (0,1)^{n-k-1} \subset \{(a^1,\ldots, \alpha)\}\times (0,1)^{n-k}  \\ 
&\subset\{(a^1,\ldots, a^{k-1})\}\times (0,1)^{n-k+1} \subset \cdots \subset \{a^1\}\times (0,1)^{n-1} \subset (0,1)^n.
\end{align*}
Note that we increased the depth of the stratification by one in the $(k+1)$st direction.

Similarly, to obtain the unit of the adjunction, we construct a map
\[
\bar\psi= \id_{(0,1)^{k-1}} \times \psi \times \id_{(0,1)^{n-k-1}}
\]
compatible with $\bar\varrho$ and then work with the pushforward  $(\bar\varrho \circ \bar\psi)_* \F$ and the same stratification.

The zigzag identities follow by the same argument as in Proposition~\ref{prop adjoints exist}, the 2-dimensional case.
One side of a zigzag identity (e.g., $L \Rightarrow L \circ R \circ L \Rightarrow L$) 
involves a composition of 1- and 2-morphisms,
and in our setting, this composition is determined by a gluing of stratified spaces.
There is an obvious gluing of the stratified spaces that appear before applying the collapse maps,
where one see a stratification diffeomorphic to the standard one for a $k$-morphism.
Pushforward along the collapse maps commutes with composition of morphisms,
so we obtain the zigzag identity.
\end{proof}

\begin{remark}
The reader might wonder why claim (2) of the Theorem holds for $k<n$ but not for $k = n$. 
(In fact, in the next section we will see that $k=n$ holds only in a very special case.)
It is quick to see why the arguments do not extend to $k = n$: 
our arguments use geometric manipulations of bending and folding to produce to unit and counit $(k+1)$-morphisms.
These manipulations increase the depth of the stratification,
which requires $k<n$ so that we have an extra direction in $(0,1)^n$ within which to work.
For $k=n$, by contrast, the $(k+1)$-morphisms are given by morphisms of bimodules and hence have a rather different flavor.
\end{remark}

\section{Pointings prevent \texorpdfstring{$(n+1)$}{(n+1)}-dualizability}\label{sec pointings}

The main result of this section is the following.\footnote{As noted earlier, Theo Johnson-Freyd suggested this claim during a collaboration with the second author. See Section 7 of \cite{JFHeis} for his perspective on this result and a sketch of a different argument for it.}

\begin{theorem}\label{thm pointing}
An $(n+1)$-dualizable object in $\Alg_n(\S)$ is equivalent to the unit locally constant factorization algebra on~$(0,1)^n$.
\end{theorem}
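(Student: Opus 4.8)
The plan is to leverage the full $n$-dualizability already established in Theorem~\ref{thm main theorem} and isolate the single extra condition imposed by $(n+1)$-dualizability, then show that the pointing obstructs it unless $\Rr\simeq\unit$. Unwinding Lurie's inductive definition, an object $X$ is $(n+1)$-dualizable precisely when it is dualizable, its (co)evaluation $1$-morphisms admit adjoints, the resulting unit/counit $2$-morphisms admit adjoints, and so on, until at the bottom the relevant $n$-morphisms admit adjoints. Since Theorem~\ref{thm main theorem} already supplies every layer up to and including the $n$-morphisms (the units and counits built from $\Rr$ by the folding and bending of Section~\ref{sec folding} and Proposition~\ref{prop adjoints exist}), the only genuinely new requirement is that these explicit $n$-morphisms admit left and right adjoints; the units and counits of those final adjunctions are $(n+1)$-morphisms, i.e.\ maps of factorization algebras.

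The conceptual heart is a pointed triviality lemma, which I would prove first. In the factorization model every factorization algebra is canonically pointed by the structure map $\unit=\F(\emptyset)\to\F(U)$ from the inclusion of the empty set, so all $(n+1)$-morphisms are maps of pointed objects; moreover iterated looping identifies the bottom layer with $\Alg_0(\S)\simeq\S_{\unit/}$, the $\infty$-category of pointed objects ($E_0$-algebras) of $\S$. The lemma I want is that the only dualizable object of $\S_{\unit/}$ is the unit, by the mechanism Johnson--Freyd observed for pointed vector spaces. For dualizable $(X,x_0)$ with dual $(X^\vee,\xi_0)$, the coevaluation $\unit\to X\otimes X^\vee$ is a map of pointed objects, hence by definition of $\S_{\unit/}$ must equal the target pointing $x_0\otimes\xi_0$, which factors through $\unit$ (it is ``rank one''). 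But the map underlying it in $\S$ is the genuine coevaluation, whose ``rank'' is all of $X$; the snake identity then forces $\id_X$ to factor through $\unit$, exhibiting $X$ as a retract of $\unit$, and for nice $\S$ this retract is an equivalence $X\simeq\unit$.

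With the lemma in hand I would carry out the reduction geometrically. Using the explicit $n$-morphism and its putative adjoint from the first paragraph, the unit $(n+1)$-morphism $u$ is a pointed map of constructible factorization algebras whose source carries the ``diagonal'' pointing (the unit element of $\Rr$, encoding the full underlying object) while its target carries a pointing factoring through the empty-set structure map, hence through $\unit$. Unwinding $u$ on the relevant disk reproduces verbatim the copairing-versus-pointing comparison of the lemma, now applied to the underlying pointed object of the $E_n$-algebra $\Rr$. Pointedness forces this underlying object to be a retract of $\unit$, hence equivalent to $\unit$; since the space of $E_n$-algebra structures on $\unit$ is contractible (the unit is the trivial $E_n$-algebra, cf.\ Construction~5.2.5.18 of \cite{LurieHA}), we conclude $\Rr\simeq\unit$.

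The hard part will be the bookkeeping in this last step: making precise which $n$-morphism and which $(n+1)$-morphism carry the binding constraint, and verifying that the source pointing genuinely encodes the full underlying object of $\Rr$ rather than a lossy invariant. This is essential, because the dimension alone does \emph{not} suffice: over $\R$ the quaternions $\mathbb{H}$ satisfy $\dim\mathbb{H}\simeq\unit$ yet are not equivalent to $\unit$ in the pointed category, so the argument must engage the adjunction unit directly and not merely its trace. Here the geometry of the factorization model is an asset, since the folding constructions of Section~\ref{sec folding} display the relevant pointings explicitly.
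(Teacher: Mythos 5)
Your pointed triviality lemma is correct and is indeed the conceptual germ of the paper's argument: it is the Johnson--Freyd observation cited there, and it is precisely the special case $A=B=\unit$ of Proposition~\ref{thm: pointed adjoints are equivalences}. (You do not even need a ``niceness'' hypothesis to upgrade the retract to an equivalence: the evaluation is also a pointed map, so $\operatorname{ev}\circ(\xi_0\otimes x_0)\simeq\id_\unit$ supplies the other composite for free.) The gap is the reduction of the theorem to this lemma, which you defer as ``bookkeeping'' but which is where essentially all of the content lies, for two reasons. First, the $n$-morphisms whose adjointability is at stake are pointed bimodules over nontrivial algebras ($\Rr^{\rev}\otimes\Rr$ and its higher analogues), so one of the two zigzag composites is a \emph{relative} tensor product $M\otimes_B N$ rather than a tensor product over $\unit$. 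Your coevaluation-versus-pointing comparison therefore does not apply ``verbatim'': the needed generalization --- showing that the pointing-induced maps $m_0^A\colon A\to M$, $n_0^B\colon B\to N$, etc.\ are equivalences in the presence of relative tensor products --- is a genuine diagram chase, carried out in the paper as Lemma~\ref{lem: equiv in S} and Proposition~\ref{thm: pointed adjoints are equivalences}.

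Second, and more seriously, even once the top-level units and counits are known to be invertible, you have only learned something about the $n$-morphisms in the dualizability data, not about $\Rr$ itself. To get from there to $\Rr\simeq\unit$ one must run a downward induction: an adjunction whose unit and counit are invertible is an adjoint equivalence (Lemma~\ref{lem: inverses from adjoints}), so the $(n-1)$-morphisms it bounds are invertible, and so on down until the evaluation and coevaluation $1$-morphisms are themselves invertible; only then does the pointing $1_\Rr\colon\unit\to\Rr$ of the evaluation bimodule get identified as an equivalence in $\S$. Your proposal instead tries to apply the lemma directly ``to the underlying pointed object of $\Rr$,'' but the $(n+1)$-dualizability data hands you no copairing $\unit\to\Rr\otimes\Rr^\vee$ in $\S_{\unit/}$ before this cascade is run; what the top layer gives you directly is trace-like, which is exactly the lossy situation your own quaternion example warns against. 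So the mechanism you identify is the right one, but the proof is missing its two main steps.
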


In other words, the only $(n+1)$-dualizable object is the unit object.
Under the dictionary with $E_n$ algebras, 
an $(n+1)$-dualizable object is equivalent to the unit $E_n$ algebra.
For example, let $n =1$ and work with $\kk$-algebras over some ordinary commutative algebra $\kk$. 
In this pointed version of the classic Morita $2$-category, the result says that the only 2-dualizable algebra is $\kk$ itself.
Contrast this situation with the observation of Lurie that for the usual Morita 2-category,
the 2-dualizable objects are the separable $\kk$-algebras whose underlying module is finitely-generated and projective over~$\kk$.

As the proof will show, the key reason for this very strong result is the pointedness of the $n$-morphisms in $\Alg_n(\S)$.
Hence, this theorem indicates that it would be more fruitful to work with various ``unpointed'' Morita categories, as discussed in Section~\ref{sec pointed comment} of the introduction.

This result is a consequence of a more technical result of independent interest.

\begin{prop}\label{thm pointed adjoint implies invertible}
If an $n$-morphism in the symmetric monoidal $(\infty,n+1)$-category $\Alg_n(\S)$ has an adjoint, then it is an equivalence.
\end{prop}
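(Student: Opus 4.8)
The plan is to localize the whole adjunction to the behavior at the top, $0$-dimensional stratum, where the pointedness built into $\Alg_n(\S)$ becomes visible, and then to run the rigidity argument that Johnson--Freyd isolated in the case of pointed vector spaces. Concretely, an $n$-morphism $\F$ is a constructible factorization algebra for a full affine flag, and its only genuinely new datum beyond its source and target $(n-1)$-morphisms $\Aa,\Bb$ is the value $M:=\F(D)$ on a small disk $D$ around the flag's central point $p$. The inclusion $\emptyset\hookrightarrow D$ equips $M$ with a canonical map $m_0\colon\unit\to M$, and this pointing is exactly the feature the statement exploits. First I would make this localization precise by pushing $\F$ forward along the projection onto the last coordinate --- the direction in which $n$-morphisms compose --- using Lemma~\ref{lemma constructible pushforward} to stay within constructible factorization algebras.

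Under this reduction the structures transport as follows: composition of $n$-morphisms (gluing along the common face and then collapsing, as in Section~\ref{sec: collapse and rescale}) induces the relative tensor product of the associated central objects; an identity $n$-morphism has central value the relevant unit, canonically pointed by $\unit\to(\cdot)$; and every $(n+1)$-morphism, being a map of factorization algebras, commutes with the empty-set structure maps and is therefore automatically a \emph{pointed} map. Suppose now $\F$ has (say) a right adjoint $\G$ with unit $u$ and counit $c$, which are $(n+1)$-morphisms obeying the snake identities. Restricting $u$ and $c$ to the central disk produces pointed maps, a coevaluation $\operatorname{coev}$ and an evaluation $\ev$, which exhibit the central object $M$ as a dualizable pointed object with pointed dualizability data.

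The step I expect to be the main obstacle is precisely this descent: verifying that the collapse-and-glue composition really induces the tensor product of the central pointed objects on the nose (up to coherent equivalence in $\S$), that the identity $n$-morphisms contribute the correct pointed units, and that the two snake identities for $(u,c)$ descend to the snake identities for $(\operatorname{coev},\ev)$. This is where the bookkeeping of the flag, the auxiliary interval data, and the explicit collapse maps must be handled carefully, and where one must pin down the pointed unit relative to which the rigidity below will bite; I would treat it by unpacking the structure maps of the pushforward at the central stratum, exactly as in the one-dimensional dictionary of Section~\ref{sec one d dictionary}.

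Granting the descent, the conclusion is short. Since $\operatorname{coev}$ is a pointed map out of the monoidal unit, whose basepoint is the identity, it is forced to equal the basepoint of its target, and dually pointedness of $\ev$ pins down its value on basepoints:
\[
\operatorname{coev}=m_0\otimes m_0^\vee,\qquad \ev\circ(m_0^\vee\otimes m_0)\simeq\id_{\unit}.
\]
Feeding the forced coevaluation into one snake identity and setting $\lambda:=\ev\circ(m_0^\vee\otimes\id_M)$ yields
\[
m_0\circ\lambda\simeq\id_M,\qquad \lambda\circ m_0\simeq\id_{\unit},
\]
so that $m_0\colon\unit\to M$ is an equivalence. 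Thus the central object is the unit pointed object, which forces $\F$ to be an equivalence of $n$-morphisms. Finally, the case of a left adjoint follows by the evident symmetry, obtained by pushing $\F$ forward along an orientation-reversing diffeomorphism (as in Example~\ref{ex: inversion}), so that a single case suffices.
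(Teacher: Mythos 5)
There is a genuine gap, and it lies exactly where you place your trust in the "short conclusion." You treat the adjunction data for the $n$-morphism $\F$ as if it were dualizability data for the pointed object $M$ in $\S$, with coevaluation and evaluation going to and from the \emph{monoidal unit} $\unit$ of $\S$. But the unit and counit of the adjunction are $(n+1)$-morphisms $u\colon \Bb \Rightarrow \G\circ\F$ and $c\colon \F\circ\G \Rightarrow \Aa$; at the level of central values these are maps of pointed bimodules $B \to N\otimes_A M$ and $M\otimes_B N \to A$, where $A$ and $B$ are the central values of the identity $(n{-}1)$-morphisms --- generally nontrivial algebras, not $\unit$. The rigidity you invoke ("a pointed map out of the unit is forced to be the basepoint") is correct, but it must be applied relative to the free--forget adjunction over $B$ (a pointed map of left $B$-modules out of $B$ is determined by where $1_B$ goes), not relative to $\unit$. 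Run correctly, it yields that $m_0^A\colon A\to M$ (the $A$-module map induced by the pointing) is an equivalence --- which is the content of Lemma~\ref{lem: equiv in S} in the paper --- not that $m_0\colon\unit\to M$ is an equivalence.

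Your stated conclusion is in fact false in general: the identity $n$-morphism on any $(n-1)$-morphism $\Aa$ has an adjoint (itself), yet its central value is the algebra $A$, which need not be equivalent to $\unit$. The claim $M\simeq\unit$ is the conclusion of Theorem~\ref{thm pointing} about $(n+1)$-dualizable \emph{objects}, and it requires the full tower of adjunctions, not a single one. The correct route (the paper's) is: use pointedness to produce $m_0^A, n_0^B$, etc., show these are equivalences via the zigzag identities, deduce that the unit and counit of the adjunction are themselves invertible, and then invoke the fact that an adjunction with invertible unit and counit is an adjoint equivalence (Lemma~\ref{lem: inverses from adjoints}). Your localization to the central disk and your instinct that pointedness is the engine are both right; the repair is to replace "dualizable pointed object over $\unit$" with "adjunction of pointed bimodules over $A$ and $B$" and rerun the rigidity there.
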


Recall from the theory of bicategories, the following useful notion about adjoint equivalences.
It plays a key role in proving the proposition and also in deducing the theorem from this proposition.

\begin{lemma}\label{lem: inverses from adjoints}
In a bicategory $\mathcal{B}$, let $L$ and $R$ be 1-morphisms that form an adjunction exhibited by unit and counit 2-morphisms $u$ and $c$. 
If $u$ and $c$ are invertible, then they exhibit $L$ and $R$ as inverse to each other.
\end{lemma}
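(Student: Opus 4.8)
The plan is to observe that this statement is essentially a direct unwinding of the definitions of ``invertible 2-morphism'' and ``inverse 1-morphisms'' in a bicategory, so that no construction is required beyond reading off the data already in hand. I would adopt the conventions of the adjunction definition given above, so that $L\colon \Ss \to \Rr$ and $R\colon \Rr \to \Ss$, with unit $u\colon \id_\Ss \Rightarrow R\circ L$ and counit $c\colon L\circ R \Rightarrow \id_\Rr$ satisfying the two snake identities recorded there.

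First I would recall that a 2-morphism between two parallel 1-morphisms $f,g\colon X \to Y$ is precisely a morphism in the hom-category $\mathcal{B}(X,Y)$, and that such a 2-morphism is \emph{invertible} exactly when it is an isomorphism in $\mathcal{B}(X,Y)$. Applying this to the hypothesis that $u$ is invertible yields an isomorphism $\id_\Ss \cong R\circ L$ in $\mathcal{B}(\Ss,\Ss)$, and applying it to $c$ yields an isomorphism $L\circ R \cong \id_\Rr$ in $\mathcal{B}(\Rr,\Rr)$.

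Next I would note that the existence of these two invertible 2-morphisms is, by definition, exactly what it means for $L$ and $R$ to be mutually inverse equivalences in $\mathcal{B}$: the pair $(u, c)$ supplies the coherence isomorphisms $\id_\Ss \simeq R\circ L$ and $L \circ R \simeq \id_\Rr$. Since $u$ and $c$ moreover satisfy the snake identities by virtue of forming an adjunction, the quadruple $(L, R, u, c)$ is in fact an \emph{adjoint} equivalence, which is the sharpest form of the conclusion and the version convenient for the subsequent application to Proposition~\ref{thm pointed adjoint implies invertible}.

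I do not expect a genuine obstacle: the content of the lemma is purely definitional, and the only point requiring care is the bookkeeping of source and target objects, so that each invertible 2-morphism is recognized as an isomorphism in the correct hom-category. In particular, no further manipulation is needed; the standard move of correcting the counit to force invertibility (which one performs when only one of $u, c$ is assumed invertible) is unnecessary here precisely because both $u$ and $c$ are invertible by hypothesis.
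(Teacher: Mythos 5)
Your proof is correct, and it matches the paper's treatment: the paper states this lemma as a recalled standard fact about adjoint equivalences in bicategories and offers no proof, precisely because—as you observe—the invertible unit and counit are by definition the isomorphisms $\id_\Ss \cong R\circ L$ and $L\circ R \cong \id_\Rr$ witnessing that $L$ and $R$ are mutually inverse, and the triangle identities upgrade this to an adjoint equivalence.
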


\begin{proof}[Proof of Theorem \ref{thm pointing}]
Let $\Rr$ be an object in $\Alg_n(\S)$ that is $(n+1)$-dualizable. 
In particular, it admits a dual with evaluation and coevaluation 1-morphisms, 
which in turn have left and right adjoints, 
in a process extending to all levels of morphisms, as we have seen in Theorem~\ref{thm main theorem}. 
Toward the top, each $n$-morphism that appears in this process (by exhibiting an adjunction of the relevant $(n-1)$-morphisms) has an (left and right) adjoint. 
By Proposition \ref{thm pointed adjoint implies invertible}, these $n$-morphisms are invertible. Since these $n$-morphisms are the unit and counits of adjunctions for certain $(n-1)$-morphisms, 
Lemma \ref{lem: inverses from adjoints} shows that these $(n-1)$-morphisms are invertible as well. 
Working down morphism-level by induction, 
we see that the evaluation and coevaluation maps are invertible. 
But this process exhibits the dual of $\Rr$ as being an inverse of $\Rr$.
As the evaluation map is invertible, we see that $\Rr \otimes \Rr^{\rev}$ is Morita equivalent to the unit $\unit$.
We know something more, however: 
by Lemma \ref{lem: equiv in S} we know that $\Rr$, 
viewed as the evaluation map and hence as an $(\Rr^{\rev}\otimes\Rr, \unit)$-bimodule, 
is equivalent to $\unit$ as an element of $\S$.
This equivalence is via the pointing of $\Rr$ by the unit element $1_\Rr: \unit \to \Rr$.
This pointing is forgotten down from the category of factorization algebras. 
As this forgetful functor detects equivalences,
we see that $\Rr$ is equivalent to $\unit$ as a factorization algebra.
\end{proof}

We sharpen Proposition \ref{thm pointed adjoint implies invertible} by proving a stronger statement. From here on, we will use the language of algebras and bimodules, but at this point the reader can convert to the language of objects and morphisms in the factorization Morita category.

\begin{prop}
\label{thm: pointed adjoints are equivalences}
Let $M$ be a $(A,B)$-bimodule equipped with the pointing $m_0: \unit \to M$,
which is a morphism in $\S$. 
If $(M, m_0: \unit \to M)$ possesses a left adjoint $(N,n_0: \unit \to N)$ among pointed bimodules, then the unit and counit maps are invertible.
Hence $A$ and $B$ are equivalent via the adjunction $(N,n_0) \dashv (M, m_0)$.
Moreover, the unit and counit maps of this adjunction provide equivalences $A \simeq M \simeq N \simeq B$ in~$\S$.
\end{prop}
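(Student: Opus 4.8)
The plan is to reduce everything to the invertibility of the unit $u$ and counit $c$, after which Lemma~\ref{lem: inverses from adjoints} immediately upgrades the adjunction $(N,n_0)\dashv(M,m_0)$ to an equivalence, making $A$ and $B$ equivalent and exhibiting $N$ as a two-sided inverse of $M$. First I would record the adjunction data concretely in the language of algebras and bimodules: the counit is an $(A,A)$-bimodule map $c\colon M\otimes_B N\to A$ into the identity bimodule, the unit is a $(B,B)$-bimodule map $u\colon B\to N\otimes_A M$, and the two triangle identities hold. The hypothesis that the adjunction lives \emph{among pointed bimodules} is exactly what I would exploit: the composite bimodules carry the induced pointings $\overline{m_0\otimes n_0}$ and $\overline{n_0\otimes m_0}$, and $c,u$ preserve them, so that $c(\overline{m_0\otimes n_0})=1_A$ and $u(1_B)=\overline{n_0\otimes m_0}$. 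Since $c$ and $u$ are bimodule maps, these two equalities determine them on all of their sources.

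Next I would build the $\S$-equivalences by hand from the pointing. Using $m_0$, set $\sigma\colon A\to M$, $a\mapsto a\cdot m_0$, and pair against $n_0$ through the counit to define $\rho\colon M\to A$, $x\mapsto c(\overline{x\otimes n_0})$. Then $\rho\sigma=\id_A$ is immediate from $A$-linearity of $c$ and pointing-preservation, since $c(\overline{a m_0\otimes n_0})=a\,c(\overline{m_0\otimes n_0})=a$, while $\sigma\rho=\id_M$ follows from the triangle identity for $M$ once the value $u(1_B)=\overline{n_0\otimes m_0}$ is inserted (it reads $c(\overline{x\otimes n_0})\cdot m_0=x$). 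Hence $A\simeq M$ in $\S$. Running the same argument with the two triangle identities interchanged gives $A\simeq N$ in $\S$, via $a\mapsto n_0\cdot a$ and $y\mapsto c(\overline{m_0\otimes y})$. Together with the equivalence $A\simeq B$ that will follow from invertibility, these assemble into the asserted chain $A\simeq M\simeq N\simeq B$.

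The main obstacle is the genuine two-sided invertibility of $c$ (and dually of $u$), as opposed to the one-sided splitting that the pointing supplies for free. Indeed $\nu\colon A\to M\otimes_B N$, $a\mapsto a\cdot\overline{m_0\otimes n_0}$, satisfies $c\nu=\id_A$ at once, but the crux is $\nu c=\id_{M\otimes_B N}$: this says every class $\overline{x\otimes y}$ is recovered from its image $c(\overline{x\otimes y})\in A$, which is the precise sense in which a \emph{pointed} left adjoint forces the composite to be ``rank one.'' I expect to establish it by writing $x=c(\overline{x\otimes n_0})\,m_0$ and $y=n_0\,c(\overline{m_0\otimes y})$ from the two triangle identities, substituting into $\overline{x\otimes y}$, and then using pointing-preservation of $c$ to collapse the result back to $\nu c(\overline{x\otimes y})$; the pointedness is indispensable here, as this is exactly the mechanism (observed by Johnson-Freyd) pinning a pointed adjunctible datum to the unit. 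The dual computation handles $u$. With $u$ and $c$ shown invertible, Lemma~\ref{lem: inverses from adjoints} yields the equivalence of $A$ and $B$ and completes the proof.
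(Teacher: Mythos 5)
Your overall strategy --- extract pointing-induced equivalences from the triangle identities, then reduce to invertibility of $u$ and $c$ and invoke Lemma \ref{lem: inverses from adjoints} --- matches the paper's, and your first two paragraphs are essentially its Lemma \ref{lem: equiv in S}: your $\sigma,\rho$ are the maps $m_0^A$ and $a_M$ there, and your second pair is $n_0^A$ and its inverse. But the step you yourself flag as the crux, $\nu c=\id_{M\otimes_B N}$, does not close as described. Carrying out your substitution with $a=c(\overline{x\otimes n_0})$ and $a'=c(\overline{m_0\otimes y})$ gives $\overline{x\otimes y}=a\cdot\overline{m_0\otimes n_0}\cdot a'$, whereas $\nu c(\overline{x\otimes y})=\nu(aa')=aa'\cdot\overline{m_0\otimes n_0}$. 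These agree only if $\overline{m_0\otimes n_0}$ is central in the $(A,A)$-bimodule $M\otimes_B N$, i.e.\ only if the left-$A$-linear and right-$A$-linear extensions of the pointing $\unit\to M\otimes_B N$ coincide. That centrality is a \emph{consequence} of $c$ being an equivalence (since $c(\overline{m_0\otimes n_0})=1_A$ and $c$ is a bimodule map), but it is not derivable from the triangle identities and pointing-preservation alone; the argument is circular exactly where the real content lies, and the dual computation for $u$ has the same problem. (A secondary issue: you reason throughout with elements of relative tensor products, which do not literally exist when $\S$ is a general symmetric monoidal $(\infty,N)$-category; the paper phrases the same manipulations as commutative diagrams.)

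The repair is to show that your section $\nu$ is itself an equivalence, after which $c\nu=\id_A$ and the two-out-of-three property give invertibility of $c$ with no element computation needed. The paper does this by factoring $\nu$ as $A\xto{m_0^A}M\simeq M\otimes_B B\xto{\id_M\otimes_B n_0}M\otimes_B N$: the first map is an equivalence of left $A$-modules by your own argument, and the second is an equivalence because $n_0^B:B\to N$, $b\mapsto b\cdot n_0$, is an equivalence of left $B$-modules. This is the ingredient your proposal omits: you establish $m_0^A$ and $n_0^A$ but never $m_0^B$ or $n_0^B$, and you cannot recover $A\simeq B$ ``after the fact,'' because the proof that $c$ and $u$ are invertible runs through $n_0^B$. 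For $u$, the paper likewise avoids constructing a two-sided inverse: it uses the free-forget adjunction for left $B$-modules to see that $u$ is determined by the pointing of $N\otimes_A M$, identifies $u$ with $n_0^B$ under the equivalence $N\otimes_A M\simeq N$ of left $B$-modules, and concludes since $n_0^B$ is an equivalence.
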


This proposition says that $A$ and $B$ are Morita equivalent in the sense of this pointed Morita category, 
which is a more restrictive condition than traditional Morita equivalence.
It also ensures that  if $(M, m_0)$ possesses a right adjoint, then $A$ and $B$ are Morita equivalent and all the objects appearing as data in the adjunction are equivalent in~$\S$.

To build toward the proof of this proposition,
we recall some key facts and make some simple observations.

Recall that in $(\infty,2)$-category $\Alg_1(\S)$, 
the algebra $A$ is pointed by its unit element $1_A : \unit \to A$.
The $A^{\op}\otimes A$-module $A$ that exhibits the algebra $A$ as 1-dualizable is naturally pointed by $1_A$ as well.

A key feature is that the pointings induce a lot of extra maps.
For example, observe that a pointing such as $m_0$ determines a map ${m_0^A}: A \to M$ of left $A$-modules by the composite
\[
A \simeq A \otimes \unit \xto{\id_A \otimes m_0} A \otimes M \xto{\star_{A,M}} M
\]
where $\star_{A,M}$ denotes the action of $A$ on $M$.
Likewise, there are maps ${m_0^B}: B \to M$, ${n_0^A}: A \to N$, and ${n_0^B}: B \to N$,
determined by the pointing and the module structures.
Similarly, we have a canonical map $M \to M \otimes_B N$ via the composite
\[
M \simeq M \otimes \unit \xto{\id_M \otimes n_0} M \otimes N \to M \otimes_B N,
\]
which we denote by $\id_M \otimes_B n_0$.
We will use a similar style to indicate similar maps produced by combining pointings with bimodule structures.

We now turn to verifying some surprising properties that pointings imply in the context of this putative adjunction.

As usual we use $u: B \longrightarrow N \otimes_A M$ to denote the unit of the adjunction and $c: M\otimes_{B} N \longrightarrow A$ to denote the counit.
The zigzag maps using $u$ and~$c$
\begin{align}
M \longrightarrow & M\otimes_B N \otimes_A M \longrightarrow M \label{eq M snake}\\
N \longrightarrow & N\otimes_A M \otimes_B N \longrightarrow N
\end{align}
are both equivalent to the identity maps.
Note that we have written these zigzags to emphasize that they are the usual formulas for adjunctions.
The composition (\ref{eq M snake}) can be unpacked into a sequence of maps in $\S$ as follows:
\[
M \simeq M \otimes \unit \xto{\id \otimes 1_B} M \otimes B \xto{\id \otimes u} M \otimes N \otimes_A M \to  M\otimes_B N \otimes_A M \xto{c \otimes_A \id} A \otimes_A M \simeq A ,
\]
which will be useful below.

There are strong compatibilities between these (co)unit maps and the pointings.
For instance, 
the composite $\unit \xto{m_0 \otimes_B n_0} M \otimes_B N \xto{c} A$ is equivalent to the identity element $\unit \xto{1_A} A$
since maps must preserve the pointing.
Similarly, the composite $\unit \xto{1_B} B \xto{u} N\otimes_A M$
is equivalent to the pointing $\unit  \xto{n_0 \otimes_A m_0} N \otimes_A M$.

These unit and counit maps also lead to further interesting maps,
such as the composite
\[
M \xto{\id_M \otimes_B n_0} M \otimes_B N \xto{c} A.
\]
This composition, which we denote $a_M$, is an inverse to $m_0^A$, as we now show.

\begin{lemma}
\label{lem: equiv in S}
The map ${m_0^A}: A \to M$ is an equivalence in $\S$, 
 as are the maps $m_0^B$, $n_0^A$, and~$n_0^B$.
\end{lemma}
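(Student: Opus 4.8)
The plan is to show that the map $a_M = c \circ (\id_M \otimes_B n_0)$ constructed just above is a two-sided inverse to $m_0^A$ in $\S$, and then to obtain the three remaining maps by the same recipe. First I would check that $a_M \circ m_0^A \simeq \id_A$. Both composites are maps of left $A$-modules $A \to A$: the map $m_0^A$ is left-$A$-linear by construction, while $a_M$ is left-$A$-linear because $c$ is a map of $(A,A)$-bimodules and $\id_M \otimes_B n_0$ respects the left $A$-action. Since $A$ is free of rank one as a left module over itself, such an endomorphism is pinned down by the image of $1_A$, so it suffices to compute $a_M(m_0^A(1_A)) \simeq a_M(m_0) \simeq c(m_0 \otimes_B n_0) \simeq 1_A$, where the last step is the pointing compatibility $c \circ (m_0 \otimes_B n_0) \simeq 1_A$. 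Hence $a_M \circ m_0^A \simeq \id_A$.

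Next I would identify the other composite $m_0^A \circ a_M$ with the snake identity \eqref{eq M snake}. Unpacking $m_0^A \circ a_M$ and rewriting its first leg using the second compatibility $u(1_B) \simeq n_0 \otimes_A m_0$ turns the composite into exactly $M \cong M \otimes_B B \xto{\id_M \otimes u} M \otimes_B N \otimes_A M \xto{c \otimes_A \id_M} A \otimes_A M \cong M$, which \eqref{eq M snake} declares to be $\id_M$. This shows $m_0^A$ is an equivalence with inverse $a_M$. The map $n_0^A$ is then treated identically, with candidate inverse $a_N = c \circ (m_0 \otimes_B \id_N)$ and the second (unlabelled) snake identity in place of \eqref{eq M snake}; the right-module analogue of the freeness argument gives $a_N \circ n_0^A \simeq \id_A$, and the $N$-snake gives $n_0^A \circ a_N \simeq \id_N$.

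For $m_0^B$ and $n_0^B$ I would run the mirror-image argument in which the unit $u$ and the compatibility $u(1_B) \simeq n_0 \otimes_A m_0$ play the roles that $c$ and $c(m_0 \otimes_B n_0)\simeq 1_A$ played above; formally this amounts to reflecting the bimodule in its two sides, i.e.\ passing to the opposite algebras $A^{\op}, B^{\op}$, which interchanges $m_0^A \leftrightarrow m_0^B$ and $n_0^A \leftrightarrow n_0^B$ and carries the adjunction $N \dashv M$ to the corresponding one, so that the previous arguments apply. I expect this $B$-side to be the main obstacle, and it is exactly where the pointedness is doing essential work: one must verify that the reflected data really is again a \emph{pointed} adjunction satisfying the mirrored compatibilities (so that, unlike in the unpointed Morita category, the relevant structure maps cannot be proper idempotent-style inclusions), and, on the technical level, that the ``evaluate on the unit'' and snake manipulations used throughout are legitimate up to coherent homotopy rather than on the nose.
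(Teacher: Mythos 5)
Your handling of $m_0^A$ and $n_0^A$ is essentially the paper's own argument: the same candidate inverse $a_M = c \circ (\id_M \otimes_B n_0)$, the same ``a pointed left $A$-module map out of $A$ is pinned down by the image of $1_A$'' step for $a_M \circ m_0^A \simeq \id_A$, and the same identification of $m_0^A \circ a_M$ with the snake composite \eqref{eq M snake} via the compatibility $u \circ 1_B \simeq n_0 \otimes_A m_0$ (the paper packages this as two commutative diagrams). Up to that point there is nothing to object to.

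The gap is exactly where you suspected it, and your proposed repair does not close it. Passing to opposite algebras turns $M$ into a $(B^{\op},A^{\op})$-bimodule and converts the left adjoint $N$ of $M$ into data exhibiting $M^{\op}$ as a left adjoint of $N^{\op}$; it exchanges left and right actions and left and right adjoints, but the counit of the resulting adjunction still lands in $A^{\op}$ and the unit still departs from $B^{\op}$. Concretely, $\op$ carries $m_0^A$ to the map $A^{\op}\to M^{\op}$ induced by the \emph{right} $A^{\op}$-action --- still an $A$-side map --- and not to $m_0^B$. The asymmetry ``the counit gives retractions onto $A$, the unit gives sections out of $B$'' is invariant under $\op$, so the mirror argument yields nothing about $B$.

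Moreover, no argument can derive the $B$-side claims from the stated hypothesis of a single left adjoint. Take $\S$ to be $\kk$-modules, $A=\kk$, $B=\kk[x]$, and $M=N=\kk$ with $\kk[x]$ acting through the augmentation $\epsilon$ and pointed by $1$. Then $M\otimes_B N\simeq\kk$ and $N\otimes_A M\simeq\kk$, the (forced) unit $u=\epsilon$ and counit $c=\id_\kk$ preserve the pointings, and both snake identities hold, so $(N,n_0)$ is a left adjoint of $(M,m_0)$ among pointed bimodules; yet $m_0^B=n_0^B=\epsilon\colon\kk[x]\to\kk$ is not an equivalence. (The $A$-side maps are equivalences here, consistent with the part of the argument that works.) To be fair, the paper itself dismisses $m_0^B$ and $n_0^B$ with ``quite similar arguments,'' so you have inherited this gap rather than created it; but since those are precisely the cases invoked in the proofs of Proposition \ref{thm pointed adjoint implies invertible} and Theorem \ref{thm pointing}, the honest fix is to assume in addition that $M$ admits a right adjoint, whose counit lands in $B$ and to which your $A$-side argument genuinely transfers --- and this stronger hypothesis is what is actually available in the intended application to $(n+1)$-dualizability.
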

One immediate consequence is that $A$ and $B$ are equivalent as objects in~$\S$.

For another consequence, 
note that $m_0^A$ is a map of left $A$-modules, 
by its construction.
Hence we know it is an equivalence of left $A$-modules, 
as the forgetful functor from left $A$-modules to $\S$ detects equivalences.
Analogous arguments apply to the other maps,
so we obtain:

\begin{cor}
The map $m_0^A$ is an equivalence of left $A$-modules, 
the map $m_0^B$ is an equivalence of right $B$-modules, 
the map $n_0^A$ is an equivalence of right $A$-modules, and 
the map $n_0^B$ is an equivalence of left $B$-modules.
\end{cor}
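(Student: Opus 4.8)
The plan is to prove the representative case $m_0^A$ in detail by showing that the map $a_M$ just introduced is a two-sided inverse to $m_0^A$ in $\S$, and then to obtain the remaining three equivalences by the symmetric arguments. Thus I must verify $a_M\circ m_0^A\simeq\id_A$ and $m_0^A\circ a_M\simeq\id_M$.

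For the first composite I would use that both $m_0^A$ and $a_M$ are maps of left $A$-modules, so that $a_M\circ m_0^A$ is a left $A$-module endomorphism of $A$. Since $A$ is the free left $A$-module on $\unit$, such an endomorphism is determined by its restriction along the unit $1_A\colon\unit\to A$, so it suffices to check $(a_M\circ m_0^A)\circ 1_A\simeq 1_A$. By unitality of the action, $m_0^A\circ 1_A\simeq m_0$, and then $a_M\circ m_0 = c\circ(\id_M\otimes_B n_0)\circ m_0 = c\circ(m_0\otimes_B n_0)\simeq 1_A$, using precisely the compatibility that $c$ preserves pointings. Hence $a_M\circ m_0^A\simeq\id_A$.

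For the second composite I would feed the zigzag identity \eqref{eq M snake} through the pointings. Write $\sigma\colon M\to M\otimes_B N\otimes_A M$ for its first leg. Using that the unit preserves pointings, i.e.\ $u\circ 1_B\simeq n_0\otimes_A m_0$, the leg $\sigma$ factors (up to the canonical identifications $M\otimes_B N\simeq M\otimes_B N\otimes_A A$) as $(\id_{M\otimes_B N}\otimes_A m_0^A)\circ(\id_M\otimes_B n_0)$ --- that is, insert $n_0$ over $B$ and then $m_0$ over $A$. Bifunctoriality of $\otimes_A$ then identifies $(c\otimes_A\id_M)\circ(\id_{M\otimes_B N}\otimes_A m_0^A)$ with $m_0^A\circ c$, so the whole zigzag composite equals $m_0^A\circ c\circ(\id_M\otimes_B n_0)=m_0^A\circ a_M$; since the zigzag is the identity, $m_0^A\circ a_M\simeq\id_M$. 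Together with the previous paragraph this exhibits $m_0^A$ as an equivalence with inverse $a_M$.

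The map $n_0^A$ is handled identically, using the second zigzag identity (for $N$) and the same two compatibilities; its inverse is $c\circ(m_0^B\otimes_B\id_N)$. For $m_0^B$ and $n_0^B$ I would invoke the symmetry of the situation under the orientation-reversing (reflection) equivalence of $\Alg_n(\S)$ from Example~\ref{ex: inversion}, which interchanges the source and target algebras $A$ and $B$ (sending a bimodule to its reverse) and carries the pointed adjunction $(N,n_0)\dashv(M,m_0)$ to another pointed adjunction of the same shape; applying the argument above to the reflected data yields that $m_0^B$ and $n_0^B$ are equivalences. I expect the main obstacle to be exactly this $B$-side: the raw adjunction data $(u,c,m_0,n_0)$ contains no morphism landing in $B$, so no inverse to $m_0^B$ can be written down directly, and one must produce it either by the reflection symmetry above or, equivalently, by first showing the counit $c$ (hence the unit $u$) is an honest equivalence rather than merely split --- note the explicit section $(\id_M\otimes_B n_0)\circ m_0^A$ of $c$ coming from the compatibility $c\circ(m_0\otimes_B n_0)\simeq 1_A$. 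It is precisely here that the pure-tensor form of the pointing $u\circ 1_B\simeq n_0\otimes_A m_0$ is indispensable, forcing the rigidity that ultimately yields Theorem~\ref{thm pointing}.
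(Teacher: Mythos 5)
Your treatment of $m_0^A$ and $n_0^A$ is correct and is, in substance, the paper's own argument: the paper shows in the proof of Lemma~\ref{lem: equiv in S} that $a_M = c\circ(\id_M\otimes_B n_0)$ is a two-sided inverse to $m_0^A$ --- one composite via freeness of $A$ and preservation of pointings, the other by identifying $m_0^A\circ a_M$ with the zigzag composite \eqref{eq M snake} --- and then deduces the module-level statement from conservativity of the forgetful functors. You run the same computation directly at the module level, with bifunctoriality of $\otimes_A$ replacing the paper's two commutative diagrams; this is the same proof.

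The genuine problem is the $B$-side, and you have located it exactly where the paper dismisses it with ``analogous arguments'': the adjunction data $(u,c)$ contains no morphism with target $B$, so no candidate inverse to $m_0^B$ or $n_0^B$ can be written down. Neither of your repairs closes this gap. The reflection of Example~\ref{ex: inversion} does not interchange $A$ and $B$: it interchanges left and right actions and replaces each algebra by its reverse, carrying $(N,n_0)\dashv(M,m_0)$ to an adjunction of the form $(M^{\rev})\dashv(N^{\rev})$ whose counit still lands in (the reverse of) $A$; running the $A$-side argument on the reflected data therefore only reproduces the statements about $m_0^A$ and $n_0^A$. Your second route is circular relative to the paper's own logic: there, invertibility of $c$ is obtained by two-out-of-three from the assertion that $\id_M\otimes_B n_0$ is an equivalence, which \emph{is} the statement that $n_0^B$ is an equivalence of left $B$-modules, and invertibility of $u$ is likewise deduced from $n_0^B$; a section of $c$ (which you correctly exhibit) does not make $c$ invertible. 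Indeed, using the $A$-side equivalences one identifies $n_0^B$ (respectively $m_0^B$) with $u$ up to the equivalences $N\otimes_A M \simeq N \simeq M$, so these maps are invertible if and only if $u$ is --- which is what one is trying to prove.

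Moreover, the obstacle is not one of technique: the two $B$-side claims do not follow from a single one-sided pointed adjunction. Take $\S=\Vect_\kk$, $A=\kk$, $B$ any algebra with an augmentation $\lambda\colon B\to\kk$, and $M=N=\kk$ with the $B$-actions through $\lambda$ and the evident pointings; then $u=\lambda$ and $c=\id_\kk$ preserve pointings and satisfy both triangle identities on the nose, so $(N,n_0)$ is a left adjoint of $(M,m_0)$ among pointed bimodules, yet $m_0^B=n_0^B=\lambda$ is an equivalence only when $B=\kk$ (while the $A$-side conclusions do hold, consistent with the part of the lemma that is actually proved). The $B$-side statements become accessible if one additionally assumes a \emph{right} adjoint $N'$ of $(M,m_0)$: the counit of that second adjunction lands in $B$, and the $A$-side argument applied to it inverts $m_0^B$. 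In the application to Theorem~\ref{thm pointing} both adjoints are available, but as a proof of the corollary as stated your proposal --- like the paper's ``analogous arguments'' --- does not go through.
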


\begin{proof}[Proof of Lemma \ref{lem: equiv in S}]
We give the proof for $m_0^A$ as quite similar arguments imply the other cases.

First, consider the morphism $1_A: \unit \to A$ in $\S$ that points $A$ as an $(A,A)$-bimodule, 
and hence as a left $A$-module.
If we apply the functor $A \otimes -$, 
which is the left adjoint of the free-forget adjunction between left $A$-modules and $\S$, 
we obtain a morphism $A \otimes 1_A: A \to A$, 
which is manifestly the identity map~$\id_A$.

Now, observe that the map $a_M$ is a map of left $A$-modules, 
as its constituents are.
Hence the composite $a_M \circ m_0^A: A \to A$ is a map of left $A$-modules,
and since it preserves the pointings, 
we see that the composite $a_M \circ m_0^A \circ 1_A: \unit \to A$ in $\S$ is equivalent to the pointing $1_A$.
Hence, $a_M \circ m_0^A \simeq \id_A$ as map of left $A$-modules and so also in~$\S$.

We now need to show that the other composite $m_0^A \circ a_M \simeq \id_M$,
which is a bit more involved.
The key idea is to identify this composite with the composition appearing in the zigzag identity~(\ref{eq M snake}),
which then implies the composite is equivalent to the identity.

The first step is to examine the following commutative diagram in~$\S$:
\[
\begin{tikzcd}
& M \otimes N \otimes \unit \arrow{r} \arrow{dd}{\id \otimes m_0} & M \otimes_B N \otimes \unit\arrow{r}{c} \arrow{dd}{\id \otimes m_0} & A \otimes \unit \arrow{dr}{m_0^A} \arrow{dd}{\id \otimes m_0} & \\
M \simeq M \otimes \unit \arrow{ur}{\id \otimes n_0} \arrow{dr}{\id \otimes n_0 \otimes m_0} & & & & M \\
& M \otimes N \otimes M \arrow{r}  & M \otimes_B N \otimes M \arrow{r}{c} 
& A \otimes M \arrow{ur}{\star} & 
\end{tikzcd}
\]
The top row is the composite $m_0^A \circ a_M \simeq \id_M$ decomposed into smaller constituents.
We have added copies of $\unit$ on the top row to make the downward arrows clearer;
they all amount to adjoining the pointing for $M$.
To see that the diagram commutes, 
note that each constituent triangle and square commutes.

The second step is to show the bottom row of the preceding diagram is equivalent to the zigzag identity.
Hence we examine the following commutative diagram in~$\S$:
\[
\begin{tikzcd}
& M \otimes N \otimes M \arrow{r} \arrow{dd} & M \otimes_B N \otimes M \arrow{r}{c}  \arrow{dd} & A \otimes M  \arrow{dd}{\star} \arrow{dr}{\star} & \\
M \simeq M \otimes \unit \arrow{ur}{\id \otimes n_0 \otimes m_0} \arrow{dr}{\id \otimes u \circ 1_B} & & & & M \\
& M \otimes N \otimes_A M \arrow{r}  & M \otimes_B N \otimes_A M \arrow{r}{c} 
& M \arrow[equal]{ur} &
\end{tikzcd}
\]
Here the first two vertical arrows map from the tensor product in $\S$ to the relative tensor product over $A$.
Direct inspection shows that the two squares and the rightmost triangle commute.

We now show that the leftmost triangle commutes.
Consider the commuting triangle
\[
\begin{tikzcd}
& \unit \arrow{dl}{1_B} \arrow{dr}{n_0 \otimes_A m_0} &  \\ 
B \arrow{rr}{u}  & & N \otimes_A M
\end{tikzcd}
\]
that arises from the pointings.
Take the tensor product $M \otimes-$ with this diagram to see that 
\[
\id_M \otimes n_0 \otimes_A m_0 \simeq \id_M \otimes u \circ 1_B
\]
as maps from $M$ to $M \otimes N \otimes_A M$.
But this implies the claim since $n_0 \otimes_A m_0$ is given by postcomposing $n_0 \otimes m_0$ with the map down to the relative tensor product over~$A$.
\end{proof}

We now turn to proving the proposition,
which boils down to verifying $A$ and $B$ are equivalent as unital algebras.
With that result in hand, Lemma \ref{lem: equiv in S} and its immediate consequences ensure the rest of the proposition.

\begin{proof}[Proof of Proposition \ref{thm: pointed adjoints are equivalences}]
We show the statement for $n_0: \unit \to N$ a left adjoint, as the right adjoint case is similar.
By Lemma \ref{lem: inverses from adjoints}, it suffices to show that the unit and counit maps are invertible.

We start by showing that $c$ is an equivalence and hence invertible.
Since the forgetful functor from $(A,A)$-bimodules to left $A$-modules detects equivalences (indeed, even down to $\S$),
it is enough to show $c$ is an equivalence as a map of left $A$-modules.
To see this assertion, observe that $M \otimes_B N \simeq M$ since $N \simeq B$ as a left $B$-module, 
and that this equivalence holds as left $A$-modules. 
Hence the map $M \xto{\id \otimes_B n_0} M \otimes_B N$ is an equivalence of left $A$-modules.
We have shown that the composite $a_M = c \circ \id \otimes_B n_0$ is the inverse to $m_0^A$ as maps of left $A$-modules, and hence $a_M$ is an equivalence.
The 2-out-of-3 property then implies that $c$ is an equivalence as well.

We now show $u$ is an equivalence.
It suffices to show that $u$ is an equivalence when viewed as a map of left $B$-modules,
since the forgetful functor from $(B,B)$-bimodules to left $B$-modules detects equivalences.
Under the free-forget adjunction between left $B$-modules and $\S$,
we have an equivalence of mapping spaces
\[
\Map_{_B\!\Mod}(B,N \otimes_A M) \simeq \Map_\S(\unit, N \otimes_A M).
\]
The pointing $n_0 \otimes_A m_0: \unit \to N \otimes_A M$ picks out a distinguished component of maps in $\S$, 
and hence also in maps in left $B$-modules.
Hence, as a map of $B$-modules, $u$ is determined by the pointing of $N \otimes_A M$.
But we know there is a distinguished equivalence $N \otimes_A M \simeq N$ as left $B$-modules,
as we have already constructed an explicit equivalence of left $A$-modules from $M$ to $A$.
In consequence, we have an explicit identification 
\[
\Map_{_B\!\Mod}(B,N \otimes_A M) \simeq \Map_{_B\!\Mod}(B,N).
\]
The map $n_0^B$ is the distinguished element in the second space associated to the pointing,
so we know $u$ and $n_0^B$ are equivalent under the identification.
Because we have shown $n_0^B$ is an equivalence of left $B$-modules, 
we are finished.
\end{proof}

\appendix
\section{The symmetric monoidal structure on \texorpdfstring{$\Alg_n(\S)$}{Algn(S)}}
\label{appx: sym mon}

We use freely notations from \cite{JFS} and~\cite{CSThesis}.

Recall that a symmetric monoidal $(\infty,N)$-category $\S$ can be chosen to be represented by a functor $[m]\mapsto \S[m]$ from $\mathrm{Fin}_*$ to the category of complete $n$-fold Segal spaces satisfying the Segal conditions.
 In \cite{CSThesis}, for any $\otimes$-sifted cocomplete $(\infty,1)$-category $\tilde{\S}$, 
 a symmetric monoidal structure on $\Alg_n(\tilde{\S})$ is constructed by giving an explicit functor $[m]\mapsto \Alg_n[m](\tilde{\S})$. 
 In brief, $k$-morphisms for $0\leq k\leq n$ in $\Alg_n[m](\tilde{\S})$ are given by an auxiliary data of some subintervals of $(0,1)$, together with $m$ factorization algebras $\F_1,\ldots, \F_m$ on $(0,1)^n$, all of which are constructible for the stratification determined by the auxiliary data.

\begin{prop}
\label{prop symm monoidal structure}
Suppose that $\S^\Box$ is \ensuremath\otimes-sifted-cocomplete.
The assignment
$$[m] \mapsto \Alg_n[m](\S^\Box_{\vec\bullet})_{\vec\bullet}$$
defines a symmetric monoidal structure on $\Alg_n(\S)$.
\end{prop}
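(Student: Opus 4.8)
The plan is to recognize the assignment $[m]\mapsto \Alg_n[m](\S^\Box_{\vec\bullet})_{\vec\bullet}$ as a commutative monoid object in complete $(n+N)$-fold Segal spaces, which is precisely the data of a symmetric monoidal structure on the $(\infty,n+N)$-category $\Alg_n(\S)$. The $\mathrm{Fin}_*$-functor itself is supplied by the explicit construction of \cite{CSThesis}, now applied levelwise to the $N$-fold simplicial diagram $\S^\Box_{\vec\bullet}$: an active map of $\mathrm{Fin}_*$ combines factorization algebras via the pointwise tensor product $(\F\otimes\G)(U)=\F(U)\otimes\G(U)$ and inserts unit factorization algebras, while an inert map $\rho^i\colon[m]\to[1]$ projects an $m$-tuple $(\F_1,\dots,\F_m)$ onto its $i$-th entry. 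Granting this functor, three things remain to be checked: (i) for each $[m]$ the value $\Alg_n[m](\S^\Box)$ is a complete $(n+N)$-fold Segal space, with $\Alg_n[1](\S^\Box)=\Alg_n(\S)$ and $\Alg_n[0](\S^\Box)\simeq *$; and (ii) the Segal condition, namely that the inert maps induce an equivalence $\Alg_n[m](\S^\Box)\xrightarrow{\ \sim\ }\Alg_n[1](\S^\Box)^{\times m}$.

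For (i), the key observation is that $\Alg_n[m](\tilde\S)\simeq\Alg_n(\tilde\S^{\times m})$: a $k$-morphism of $\Alg_n[m](\tilde\S)$ is one piece of auxiliary data, determining a single stratification of $(0,1)^n$, together with $m$ factorization algebras all constructible for it, and since a factorization algebra valued in a product is an $m$-tuple of factorization algebras with componentwise structure maps and componentwise constructibility, this is exactly one constructible factorization algebra valued in $\tilde\S^{\times m}$. A finite product of $\otimes$-sifted-cocomplete $(\infty,1)$-categories and $\otimes$-sifted-cocontinuous functors is again of that type, so the diagram $(\S^\Box)^{\times m}\simeq\S[m]^\Box$ satisfies the hypothesis of Definition~\ref{technical condition}; Theorem~\ref{thm existence Morita}\,(1) then applies and shows that $\Alg_n[m](\S^\Box)$ satisfies the Segal and completeness conditions in each of its $n+N$ simplicial directions. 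The base point is immediate since $\S[0]$ is terminal, whence $\Alg_n[0](\S^\Box)\simeq\Alg_n(*)\simeq *$, and $\Alg_n[1]=\Alg_n$ by construction.

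The heart of the argument, and the step I expect to be the main obstacle, is the Segal condition (ii), which amounts to product-preservation of $\Alg_n(-)$ assembled over the simplicial directions. I would verify it levelwise: at a fixed multisimplicial level $\vec\ell$ the auxiliary data pins down a single stratification $X_{\vec\ell}$, and both $\Alg_n[m](\S^\Box)_{\vec\ell}$ and $\bigl(\Alg_n(\S^\Box)^{\times m}\bigr)_{\vec\ell}$ reduce to the space of constructible factorization algebras on $X_{\vec\ell}$ valued in $(\S^\Box_{\vec\ell})^{\times m}$, respectively to the $m$-fold product of such valued in $\S^\Box_{\vec\ell}$; these agree by the identification $\mathrm{Fact}_{X_{\vec\ell}}\bigl((\S^\Box_{\vec\ell})^{\times m}\bigr)\simeq \mathrm{Fact}_{X_{\vec\ell}}(\S^\Box_{\vec\ell})^{\times m}$, the projections realizing exactly the inert maps $\rho^i$. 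The only genuine bookkeeping subtlety is that $\Alg_n[m]$ carries a single copy of the auxiliary data whereas the product carries $m$ copies; but the configuration space of such auxiliary data is contractible, so this nominal mismatch is an equivalence and does not obstruct the comparison. Assembling these levelwise equivalences compatibly across the $N$ simplicial directions of $\S^\Box_{\vec\bullet}$ (using that the transition functors are $\otimes$-sifted-cocontinuous, hence product-preserving) and with the $\mathrm{Fin}_*$-functoriality yields the Segal condition, and with (i) this exhibits the desired symmetric monoidal structure.
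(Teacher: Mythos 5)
Your argument is correct and reaches the same conclusion, but it is organized differently from the paper's proof. The paper disposes of the Segal condition in $[m]$ by citing the symmetric monoidality of $\Alg_n$ for $\otimes$-sifted-cocomplete $(\infty,1)$-categories from \cite{CSThesis} (applied levelwise in $\vec l$), and then, for fixed $m$, verifies Segality and completeness of the $(n+N)$-fold simplicial space by splitting into the two groups of variables: in $\vec k$ this is the Segal condition for $\Alg_n[m](\tilde{\S})$ already established in the $(\infty,1)$-categorical setting, while in $\vec l$ it reduces to showing that $\Alg_n[m](-)_{\vec k}$ preserves fiber products, for which the proof of \cite[Proposition 8.17]{JFS} applies verbatim. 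You instead reduce the fixed-$m$ statement to the already-established case $m=1$ via the identification $\Alg_n[m](\tilde{\S})\simeq\Alg_n(\tilde{\S}^{\times m})$ together with an application of Theorem~\ref{thm existence Morita}(1) to $\S[m]$, and you prove the Segal condition in $[m]$ directly from the componentwise description of factorization algebras valued in a product plus the contractibility of the space of auxiliary data. Your reduction is a clean shortcut that avoids rerunning the \cite{JFS} argument for $\Alg_n[m]$, at the cost of having to justify the identification compatibly with all face maps; this does go through, since the collapse maps involve relative tensor products and sifted colimits in a product category are computed componentwise. One small slip: ``$\otimes$-sifted-cocontinuous, hence product-preserving'' is not a valid implication --- sifted cocontinuity concerns colimits and says nothing about preservation of products. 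Fortunately the naturality in $\vec l$ you invoke it for holds anyway, simply because the transition functors of the product diagram $(\S^\Box_{\vec\bullet})^{\times m}$ act componentwise by definition.
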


\begin{proof}
The Segal condition in $[m]\in \mathrm{Fin}_*$ is just the symmetric monoidality of~$\Alg_n$.
The rest of the proof is completely analogous to the proof of  \cite[Theorem 8.5 (1)]{JFS}, but we reiterate it here for completeness of the argument.

It remains to show that for fixed $m$, the $n+N$-fold simplicial space
$$\Delta^n\times \Delta^N \ni (\vec k, \vec l) \longmapsto  \Alg_n[m](\S^\Box_{\vec l})_{\vec k}$$
satisfies the Segal condition separately in each variable.
Segality and completeness in $\vec k$ is just the Segal condition for $\Alg_n[m](\tilde{\S})$.

It remains to prove Segality and completeness in~$\vec l$. Since $\S^\Box_{\vec\bullet}$ is a complete $N$-fold Segal object by \cite[Remark 8.4]{JFS}, it suffices to prove that $\Alg_n[m](-)_{\vec k}$ preserves fiber products. The proof of \cite[Proposition 8.17]{JFS} applies to $\Alg_n[m]$ verbatim.
\end{proof}

\begin{remark}
We point out that the argument works for all three variants of the higher Morita category from \cite{JFS}, using the ``strong'', ``lax,'' or ``oplax'' versions of~$\S^\Box$.
\end{remark}

\bibliography{bib_dualizability}
\bibliographystyle{amsalpha}

\end{document}